\documentclass[12pt]{elsarticle}
\usepackage{amsmath,amssymb,amsthm}

\usepackage{orcidlink}
\usepackage{bbm}
\usepackage{enumerate}
\usepackage{tikz}
\usepackage{tikz-cd}
\usepackage{hyperref}
\usepackage{geometry}
\geometry{a4paper}
\theoremstyle{plain}

  \newtheorem{thm}{Theorem}[section]
  \newtheorem{cor}[thm]{Corollary}
  \newtheorem{lem}[thm]{Lemma}
  \newtheorem{prop}[thm]{Proposition}
  \newtheorem{exmp}[thm]{Example}
  \newtheorem{rem}[thm]{Remark}
  
  \newtheorem{defn}[thm]{Definition}

\newcommand{\da}{\downarrow \hspace{-2pt}}

\newcommand{\ua}{\uparrow \hspace{-2pt}}

\newcommand{\UUa}[0]{\makebox[1ex][l]{\lower.15ex
                                 \hbox{$\uparrow$}}\kern-1ex\lower-.15ex
                                 \hbox{$\uparrow$}}

\newcommand{\DDa}[0]{\makebox[1ex][l]{\lower.15ex
                                 \hbox{$\downarrow$}}\kern-1ex\lower-.15ex
                                 \hbox{$\downarrow$}}

\begin{document}
\begin{frontmatter}
\title{The Scott space of lattice of closed subsets with supremum operator as a topological semilattice\tnoteref{t1}}
\tnotetext[t1]{Research supported by NSF of China (Nos. 12471439, 12231007).}

\author[add2]{Yu Chen}
\ead{eidolon_chenyu@126.com}
\author[add1]{Hui Kou}
\ead{kouhui@scu.edu.cn}
\author[add1]{Zhenchao Lyu\orcidlink{0000-0003-2685-1180}\corref{cor}}
\ead{zhenchaolyu@scu.edu.cn}
\cortext[cor]{Corresponding author}
\author[add1]{Weiyu Yang}
\ead{scuyangweiyu@163.com}

\address[add1]{Department of Mathematics, Sichuan University, Chengdu 610064, China}
\address[add2]{Faculty of Science, Kunming University of Science and Technology, Kunming 650500, China}

\begin{abstract}
  We present several equivalent conditions of the continuity of the supremum function from the square of the Scott space of $C(X)$ to itself under mild assumptions, where $C(X)$ denotes the lattice of closed subsets of a $\mathbf{T_0}$ topological space.
 We  also show that a $\mathbf{T_0}$ space is quasicontinuous (quasialgebraic) iff the lattice of its closed subsets  is a quasicontinuous (quasialgebraic) domain by using $n$-approximation. Furthermore, we provide a necessary condition for when a topological space possesses a Scott completion. This allows us to give more examples which do not have Scott completions. 

  \vskip 3mm
{\bf Keywords}: Monotone determined space,  Quasicontinuous, D-completion, Scott topology, Lattice of closed subsets, Scott completion.

  \vskip 2mm

{\bf Mathematics Subject Classification}:  54A10, 54A20, 06B35.
\end{abstract}

\end{frontmatter}

\section{Introduction}

Topological semilattice are closely connected to  the theory of continuous lattices in domain theory.  For instance, Lawson showed that compact unital semilattices with small semilattices are exactly continuous lattices equipped with the Lawson topology \cite[Theorem VI-3.4]{CON03}. From the viewpoint of  category topology, Banaschewski introduced the notion of an essential embedding within the category of $\mathbf{T_0}$ spaces and continuous maps \cite{Bana77}, which has a close relation to Scott's injective spaces. He also obtained a characterization of so-called essentially complete $\mathbf{T_0}$ spaces in terms of join filters. Subsequently, Hoffmann provided a more transparent characterization: essentially complete $\mathbf{T_0}$ spaces  are precisely the sober spaces that form complete lattices under their specialization order, satisfying an additional condition relating the supremum operator to the topology \cite{Hoff79}. Powerdomains play an important role in modeling the semantics of nondeterministic programming languages (see \cite[Chapter 6]{AJ94} and \cite[IV-8]{CON03}). The Hoare powerdomain over a topological space can be realized as the set of its nonempty closed subsets ordered by inclusion.
In this paper we study a special kind of essentially complete $\mathbf{T_0}$ space, 
 namely the Scott space of the lattice of closed subsets of a topological space with the supremum operator.

Directed spaces \cite{YK2015} constitute a special class of $\mathbf{T_0}$ topological spaces which extend the concept of Scott spaces. Within $\mathbf{T_0}$ spaces, the notion of directed spaces is equivalent both to monotone determined spaces (as established by Ern\'{e} \cite{ERNE2009}) and to CRP-spaces (introduced by Battenfeld \cite[Definition 6.7]{Batt13}).
$\mathbf{T_0}$ c-spaces (or $n$-continuous spaces) are all monotone determined spaces. Based on this, Feng and Kou \cite{FK2017} introduced another approximation relation on monotone determined spaces, called $d$-approximation, and defined the quasicontinuous monotone determined spaces. They established an equivalence between quasicontinuous spaces and locally hypercompact spaces. In domain theory, there is a close relationship between the continuity of a dcpo $L$ and that of the lattice $\sigma(L)$ of its Scott-open subsets. More surprisingly, the continuity of a dcpo $L$ can be characterized by the continuity of the lattice $\Gamma(L)$ of its Scott-closed subsets (see \cite[Theorem 3.17]{CK2022}). Moreover, this correspondence extends to general $\mathbf{T_0}$ spaces.
We show that a $\mathbf{T_0}$ space $X$ is  quasicontinuous (quasialgebraic)  iff the lattice of its closed subsets $C(X)$ is quasicontinuous (quasialgebraic) by using $n$-approximation. 

It is well known that for completely distributive (resp., distributive hypercontinuous) lattices $L$, the hull-kernel topology on the spectrum $\mathrm{Spec} L$ coincides with the Scott topology \cite{CON03}.
A longstanding open problem posed by Lawson and Mislove asks: for which distributive continuous lattices does the spectrum, equipped with the hull-kernel topology, form precisely a sober locally compact Scott space? (See \cite[Problem 528]{Mill1990}.)  Chen, Kou, and Lyu \cite{CK2022} established a necessary condition for this problem: $\sigma(L^{\mathrm{op}}) = v(L^{\mathrm{op}})$. Since $L^{\mathrm{op}}$ corresponds to the lattice of closed subsets of a topological space, we investigate equivalent conditions for $\sigma(C(X)) = v(C(X))$ in general topological spaces $X$. 
Surprisingly, the continuity of the supremum map $\sup\colon \Sigma C(X) \times \Sigma C(X) \to \Sigma C(X)$ constitutes one such equivalent condition under mild assumptions. For monotone determined spaces, the additional assumption can be dropped. In addition, we provide an example of non-monotone determined space $X$ such that $\eta=\lambda x.{\downarrow}x\colon X\rightarrow\Sigma C(X)$ is continuous.

Monotone convergence spaces play a fundamental role in domain theory. Every $\mathbf{T_0}$ space admits a free monotone convergence completion, known as its D-completion \cite{KeiLaw2009}. We show that D-completion of any monotone determined space is a Scott space. The concept of Scott completion, introduced by Zhang, Shi, and Li \cite{ZHA2021}, refers to a construction that slightly differs from D-completion. As a corollary, we establish that for any monotone determined space, its Scott completion coincides with its D-completion. The Scott completion of monotone determined spaces are shown to be very useful to construct free dcpo algebras over any general dcpo (see \cite{CKL23,CKLX24}).  
But the notion does not seem to have been studied much. We provide a necessary condition for when a space has Scott completion. Based on this we give a simple proof of a previous result in \cite[Theorem 4.1]{Kou01}.

{\it{Outline}}. In Section 2, we recall some useful knowledge about monotone determined spaces. We discuss  D-completion of monotone determined spaces and provide more examples of topological spaces which do not have Scott completions are presented in Section 3.  In Section 4 the Scott space of the lattice of closed subsets of a topological space with the jointly continuous supremum operator is discussed. We characterize the quasicontinuity of lattice of closed subsets in Section 5.
\section{Preliminaries}

We assume some basic knowledge of domain theory and topology, as in, e.g., \cite{AJ94,CON03,GAU}.

A nonempty set $P$ endowed with a partial order $\leq$  is called a \emph{poset}. For $A\subseteq P$, we set $\da A=\{x\in P: \exists a\in A,  \ x\leq a\}$,\ $\ua A =\{x\in P: \exists a\in P, \ a\leq x\}$.\ $A$ is called a \emph{lower} or an \emph{upper} set, if $A=\da A$ or $A=\ua A$ respectively. For an element $a\in P$, we use $\da a$ or $\ua a$ instead of $\da\{a\}$ or $\ua \{a\}$, respectively.

Let $P$ be a poset. We denote $\sigma(P)$, $\upsilon(P)$ and $A(P)$ to be the \emph{Scott topology},\ the \emph{upper topology} and the \emph{Alexandroff topology} on $P$ respectively. 
For a complete lattice $L$, and $x,y \in L$, we say that $x \prec y$ iff $y \in \mathrm{int}_{\upsilon(L)} \ua x$. $L$ is \textit{hypercontinuous} if $\{d \in L:d \prec x\}$ is directed and has $x$ as a supremum for all $x\in L$. $L$ is \textit{hyperalgebraic} if $x = \bigvee \{d\in L: d \prec d \leq x\}$ for all $x \in L$. An element $y$ such that $y \prec y$ is called \textit{hypercompact}.

Topological spaces will always be supposed to be $\mathbf{T_0}$. For a topological space $X$, its topology is denoted by $\mathcal{O}(X)$ or $\tau$. The partial order $\sqsubseteq$ defined on $X$ by $x\sqsubseteq y \Leftrightarrow  x\in \overline{\{y\}}$
is called the  \textit{specialization order}, where $\overline{\{y\}}$ is the closure of $\{y\}$.\ From now on, all order-theoretical statements about  $\mathbf{T_0}$ spaces, such as upper sets, lower sets, directed sets, and so on,  always refer to  the specialization order ``$\sqsubseteq $".\ 
A subset $C$ of a topological space is \textit{irreducible} if it is nonempty and if $C\subseteq A\cup B$, where $A$ and $B$ are closed,
implies that $C\subseteq A$ or $C\subseteq B$. A  $\mathbf{T_0}$ topological space is \textit{sober} if any irreducible closed subset is the closure of a singleton.

For any two topological spaces $X,Y$, we denote $Y^{X}$ or $TOP(X,Y)$ the set of all continuous maps from $X$ to $Y$, endowed with the pointwise order. Denote $[X \to Y]_p$ and $[X \to Y]_I$ to be the topological space equipped with the topology of pointwise convergence and the \textit{Isbell topology} on $Y^{X}$ respectively.

We now introduce the notion of a monotone determined space. 
Let $(X,\mathcal{O}(X))$ be a $\mathbf{T_0}$ space. Every directed subset $D\subseteq X$ can be  regarded as a monotone net $(d)_{d\in D}$. Set
$DS(X)=\{D\subseteq X: D \ {\rm is \ directed}\}$ to be the family of all directed subsets of $X$. For an $x\in X$, we denote  $D\rightarrow x$  to mean  that $x$ is a limit of $D$, i.e., $D$ converges to $x$ with respect to the topology on $X$. Then the following result is obvious.

\begin{lem}  Let $X$ be a $\mathbf{T_0}$ space. For any $(D,x)\in DS(X)\times X$, $D\rightarrow x$ if and only if $D\cap U\not=\emptyset$ for any open neighborhood of $x$.
\end{lem}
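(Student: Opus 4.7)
The plan is to view the directed set $D$ as the monotone net $(d)_{d \in D}$ indexed by itself, as indicated in the paragraph just before the lemma, and then to unpack the definition of net convergence in a topological space: $D \to x$ means that for every open neighborhood $U$ of $x$, the net is eventually in $U$, i.e., there exists $d_0 \in D$ with $d \in U$ for all $d \in D$ with $d \sqsupseteq d_0$. The apparent gap between this ``eventually in $U$'' condition and the lemma's weaker ``meets $U$'' condition is exactly what needs to be bridged.

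The forward direction is immediate: if $D \to x$ and $U$ is an open neighborhood of $x$, then $D$ is eventually in $U$, so certainly $D \cap U \neq \emptyset$.

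For the backward direction, the key observation is that open sets in a $\mathbf{T_0}$ space are upper sets with respect to the specialization order $\sqsubseteq$. Indeed, if $d_0 \in U$ and $d_0 \sqsubseteq d$, then $d_0 \in \overline{\{d\}}$, so any open set containing $d_0$ must also contain $d$. Given this, suppose $D \cap U \neq \emptyset$ for every open neighborhood $U$ of $x$; pick $d_0 \in D \cap U$. Since $D$ is directed, for any $d \in D$ with $d \sqsupseteq d_0$ we have $d \in U$ by the upper-set property, which witnesses that the net $(d)_{d \in D}$ is eventually in $U$. Thus $D \to x$.

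There is no real obstacle here; the only subtlety is the reduction from ``eventually in $U$'' to ``meets $U$'', which relies solely on openness implying upward-closure under $\sqsubseteq$. This lemma serves mainly as a convenient reformulation for the rest of the paper, replacing the cofinality condition in net convergence by a simpler nonemptiness condition valid for directed sets in $\mathbf{T_0}$ spaces.
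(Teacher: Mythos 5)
Your proof is correct and is precisely the argument the paper leaves implicit (the lemma is stated there as ``obvious'' with no proof given): the forward direction is trivial, and the backward direction hinges exactly on the fact that open sets are upper sets in the specialization order, so a single point of $D\cap U$ witnesses that the monotone net $(d)_{d\in D}$ is eventually in $U$. Nothing is missing.
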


Set $DLim(X)=\{(D,x)\in DS(X)\times X: \ D\rightarrow x \}$ to be the set of all pairs of directed subsets and their limits in  $X$. Then $(\{y\},x)\in DLim(X) $ iff $x\sqsubseteq y$ for all $x,y\in X$. 

\begin{defn} Let $X$ be a $\mathbf{T_0}$ space. A subset $U\subseteq X$ is called directed-open  if for all $(D,x)\in DLim(X)$, $x\in U$ implies $D\cap U\not=\emptyset$.
\end{defn}

Obviously, every open set of $X$ is directed-open. Set
$d(\mathcal{O}(X))=\{U\subseteq X: U \ {\rm is} \  {\rm directed}\text{-}{\rm open}\},$
then $\mathcal{O}(X)\subseteq d(\mathcal{O}(X))$.

\begin{thm} {\rm\cite[Theorem 2.4]{YK2015}} \label{convergence}
 Let $X$ be a $\mathbf{T_0}$ topological space. Then
\begin{enumerate} 
\item [{\rm(1)}]  For all $U\in d(\mathcal{O}(X))$, $U=\ua U$;
\item [{\rm(2)}] $X$ equipped with $d(\mathcal{O}(X))$ is a $\mathbf{T_0}$ topological space such that  $\sqsubseteq_d =\sqsubseteq$, where $\sqsubseteq_d$ is the specialization order relative to $d(\mathcal{O}(X))$.
\item [{\rm(3)}] For a directed subset $D$ of $X$, $D\rightarrow x$ iff $D\rightarrow_d x$ for all $x\in X$, where $D\rightarrow_d x$ means that $D$ converges to $x$ with respect to the topology $d(\mathcal{O}(X))$.

\item[{\rm(4)}] $d(d(\mathcal{O}(X))=d(\mathcal{O}(X))$.
\end{enumerate}
\end{thm}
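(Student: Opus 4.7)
The plan is to handle the four items in order since each builds on the previous. For \textbf{(1)}, I would take $U \in d(\mathcal{O}(X))$ and $x \in U$ with $x \sqsubseteq y$, then observe that $(\{y\},x) \in DLim(X)$ (as noted just before Definition 2.2, since $x \sqsubseteq y$), so by the definition of directed-openness $\{y\} \cap U \neq \emptyset$, giving $y \in U$. Hence $U = \ua U$.

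For \textbf{(2)}, the main task is verifying that $d(\mathcal{O}(X))$ is closed under finite intersections; unions and the empty/full sets are immediate. Given $U,V \in d(\mathcal{O}(X))$ and $(D,x) \in DLim(X)$ with $x \in U \cap V$, directed-openness yields $d_1 \in D \cap U$ and $d_2 \in D \cap V$, and since $D$ is directed I can pick $d \in D$ above both; by (1) each of $U,V$ is an upper set, so $d \in U \cap V$. Since $\mathcal{O}(X) \subseteq d(\mathcal{O}(X))$ and the former is $\mathbf{T_0}$, so is the latter. For the specialization orders: $\mathcal{O}(X) \subseteq d(\mathcal{O}(X))$ gives $x \sqsubseteq_d y \Rightarrow x \sqsubseteq y$, while $x \sqsubseteq y$ together with (1) ensures that every directed-open set containing $x$ also contains $y$, giving the converse.

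For \textbf{(3)}, the forward direction is trivial since $\mathcal{O}(X) \subseteq d(\mathcal{O}(X))$: any $d$-neighborhood of $x$ is in particular a $d$-open set, so if $D \to_d x$ then every open neighborhood of $x$ meets $D$. The reverse direction is where the definition of directed-openness is used in its intended role: if $D \to x$ and $U \in d(\mathcal{O}(X))$ contains $x$, then $(D,x) \in DLim(X)$ by assumption, so directed-openness of $U$ gives $D \cap U \neq \emptyset$, i.e.\ $D \to_d x$.

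For \textbf{(4)}, the inclusion $d(\mathcal{O}(X)) \subseteq d(d(\mathcal{O}(X)))$ is automatic from the remark that open sets are always directed-open applied to the topology $d(\mathcal{O}(X))$. The converse is where (2) and (3) do all the work: by (2) the specialization orders agree, so $DS(X)$ is the same set computed in either topology; by (3) the convergence relation of directed sets agrees, so $DLim(X) = DLim_d(X)$. Therefore the directed-open condition with respect to $d(\mathcal{O}(X))$ coincides with the directed-open condition with respect to $\mathcal{O}(X)$, giving $d(d(\mathcal{O}(X))) \subseteq d(\mathcal{O}(X))$. No step is particularly hard, but the cleanest presentation will be to prove (1)--(3) carefully enough that (4) becomes the one-line observation above.
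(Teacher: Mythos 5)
Your proof is correct; the paper itself cites this as Theorem 2.4 of Yang--Kou and gives no proof, and your argument is exactly the standard one: (1) from singleton convergence, (2) using (1) plus directedness to close under finite intersections, (3) directly from the definition of directed-open, and (4) from the invariance of $DLim(X)$ established in (2) and (3). The only cosmetic slip is in (3), where the sentence beginning ``any $d$-neighborhood of $x$ is in particular a $d$-open set'' should read ``any open neighborhood of $x$ is in particular $d$-open''; the logic of both directions is nonetheless sound.
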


\begin{defn}{\rm\cite[Definition 3.1]{YK2015}} A  topological space $X$ is said to be a monotone determined space if it is $\mathbf{T_0}$ and  every directed-open set is open; equivalently, $d(\mathcal{O}(X))=\mathcal{O}(X)$.
\end{defn}

 Given any space $X$, we denote $\mathcal{D}X$ the topological space $(X,d(\mathcal{O}(X)))$.

\begin{thm}{\rm\cite[Theorem 3.2]{YK2015}} Let $X$ be a $\mathbf{T_0}$ space, we have
\begin{enumerate}
\item[{\rm(1)}] $\mathcal{D}X$ is a monotone determined space.

\item[{\rm(2)}] The following three conditions are equivalent to each other:
\begin{enumerate}
\item[{\rm(i)}] $X$ is a monotone determined space;
\item[{\rm(ii)}] For all $U\subseteq X$, $U$ is open iff for any $(D,x)\in DLim(X)$, $x\in U$ implies $U\cap D\not=\emptyset$.
\item[{\rm(iii)}] For all $A\subseteq X$, $A$ is closed iff for any directed subset $D\subseteq A$, $D\rightarrow x$ implies $x\in A$ for all $x\in X$.
\end{enumerate}
\end{enumerate}
\end{thm}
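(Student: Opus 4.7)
The plan is to derive the entire statement from material already at hand, in particular Theorem~\ref{convergence} together with the observation, made right before the theorem, that $\mathcal{O}(X)\subseteq d(\mathcal{O}(X))$ always holds.

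For part (1), the key point is that $\mathcal{D}X = (X, d(\mathcal{O}(X)))$ is, by definition, monotone determined exactly when $d(d(\mathcal{O}(X))) = d(\mathcal{O}(X))$, and this identity is precisely item~(4) of Theorem~\ref{convergence}. So (1) is immediate, with no extra work required. One should, however, note that applying the operator $d(\cdot)$ to the topology of $\mathcal{D}X$ is legitimate because part~(3) of Theorem~\ref{convergence} ensures that directed convergence is preserved when passing from $\mathcal{O}(X)$ to $d(\mathcal{O}(X))$, so the construction of $d$ is invariant under this passage.

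For part (2), I would first address (i)\,$\Leftrightarrow$\,(ii). By definition, $X$ is monotone determined iff $d(\mathcal{O}(X)) = \mathcal{O}(X)$; since $\mathcal{O}(X) \subseteq d(\mathcal{O}(X))$ is automatic, this equality holds exactly when every directed-open set is open, which is the nontrivial direction of (ii). The other direction of (ii) -- that every open set satisfies the stated convergence condition -- is just the definition of directed-open and costs nothing. For (ii)\,$\Leftrightarrow$\,(iii) I would pass to complements: setting $U = X\setminus A$, we have $D\subseteq A$ iff $D\cap U = \emptyset$, and $x\in A$ iff $x\notin U$. Thus the condition in (iii), ``for every directed $D\subseteq A$ with $D\to x$ one has $x\in A$'', is merely the contrapositive of the condition in (ii), ``for every $(D,x)\in DLim(X)$, if $x\in U$ then $U\cap D\neq\emptyset$''. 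Consequently the two characterizations transfer into each other cleanly.

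The main obstacle is essentially nonexistent; the only care needed is in the contrapositive translation for (ii)\,$\Leftrightarrow$\,(iii), where one must correctly package the hypotheses ``$D$ is directed'' and ``$D\to x$'' on both sides of the equivalence, and in noting that the ``iff'' in (ii) and (iii) bundles two implications, of which one direction is trivial (open sets are always directed-open, closed sets always contain limits of their directed subsets by monotone convergence in the specialization order) and only the other direction captures the monotone-determined property.
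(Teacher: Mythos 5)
Your argument is correct: part (1) is exactly the idempotency $d(d(\mathcal{O}(X)))=d(\mathcal{O}(X))$ from Theorem~\ref{convergence}(4) together with the $\mathbf{T_0}$ statement in item (2) there, and part (2) follows by unfolding the definition of directed-open (for (i)$\Leftrightarrow$(ii)) and passing to complements (for (ii)$\Leftrightarrow$(iii)). The paper cites this result from the literature without reproducing a proof, and your derivation is the standard one, so there is nothing to add beyond noting that it is sound.
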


Monotone determined spaces include many important examples in domain theory.

\begin{exmp}{\rm

\begin{enumerate}[{\rm(1)}]
\item Every poset endowed with the Scott topology is a monotone determined space.

\item (\cite[Section 7]{ERNE2009}) Every poset endowed with the weak Scott topology is a monotone determined space.
\item Every c-space is a monotone determined space. In particular, any poset endowed with the Alexandroff topology is a monotone determined space.
\item (\cite{ERNE2009,FK2017}) Every locally hypercompact space is a monotone determined space.
\end{enumerate}
}
\end{exmp}

We denote ${\bf DTop}$ the category of all nonempty monotone determined spaces with continuous maps as morphisms.

\vskip 3mm

\section{D-completion and Scott completion}
Monotone convergence spaces (a.k.a d-spaces) is a very important class of topological spaces in domain theory. 
It is well known that every $\mathbf{T_0}$ space has a D-completion.
Let us recall some results about D-completion for further discussions.

\begin{defn}
	Let $X$ be a $\mathbf{T_0}$ space. If $X$ with the specialization order is a dcpo and every open set of $X$ is Scott open in $(X,\leq)$, then we call $X$ a monotone convergence space space.
\end{defn} 

A subset $A$ of a poset $P$ is called \textit{d-closed} if for every directed set $D$ of $A$ that possesses supremum $\bigvee D$,  $\bigvee D$ is included in $A$. 
The d-closed sets form the closed sets for a topology, called the \textit{d-topology} \cite[Section 5]{KeiLaw2009}. 
We denote the closure of $A$ in $P$ with d-topology by $\text{cl}_{d}(A)$. If $\text{cl}_{d}(A)$ is equal to $P$, then we say that $A$ is \textit{d-dense} in $P$.  A map $f$ from a poset $P$ to a poset $Q$ is called \textit{d-continuous} if $f$ is continuous under the d-topology. 

\begin{defn}{\rm\cite[Definition 6.5]{KeiLaw2009}}
	An embedding $j\colon X\rightarrow\tilde{X}$ of a space $X$ with image a d-dense subset of a d-space $\tilde{X}$ is called a $D$-completion.
\end{defn}

\begin{lem}{\rm\cite[Lemma 6.3]{KeiLaw2009}}
	Consider the following properties for a subset $A$ of a $\mathbf{T_0}$ space 
	\begin{enumerate}[{\rm(1)}]
		\item $A$ is a monotone convergence space.
		\item $A$ is a sub-dcpo.
		\item $A$ is d-closed.
	\end{enumerate}
	Then $(1)$ implies $(2)$ implies $(3)$, and all three are equivalent if $X$ is a monotone convergence space.
\end{lem}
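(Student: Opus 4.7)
The plan is to handle the two implications in the chain, and then the converse needed for the equivalence under the stronger hypothesis, in order.

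For $(1) \Rightarrow (2)$, I fix a directed subset $D \subseteq A$. Since $A$ with its subspace topology is a monotone convergence space, and its specialization order is the restriction of $\sqsubseteq$, the set $D$ admits a supremum $a$ computed inside $A$, and $D \to a$ in the subspace topology, hence $D \to a$ in $X$. The non-trivial content is to show that $a$ is also the supremum of $D$ in $X$: it is clearly an upper bound, since $d \sqsubseteq a$ in $A$ already means $d \in \overline{\{a\}} \cap A \subseteq \overline{\{a\}}$ in $X$. For maximality, I would take any upper bound $b$ of $D$ in $X$ and check $a \sqsubseteq b$, i.e., $b \in U$ for every open $U \ni a$ in $X$: the convergence $D \to a$ in $X$ supplies some $d \in D \cap U$, and then since every open set in a $\mathbf{T_0}$ space is an upper set for $\sqsubseteq$ (noted already in Theorem \ref{convergence}(1) in the directed-open setting, and standard in general), $d \sqsubseteq b$ forces $b \in U$. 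Thus $a = \bigvee_X D$, which lies in $A$.

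The step $(2) \Rightarrow (3)$ is then essentially definitional: $d$-closedness of $A$ in $X$ asks that every directed $D \subseteq A$ whose supremum happens to exist in $X$ has $\bigvee_X D \in A$, while being a sub-dcpo gives, for every directed $D \subseteq A$, both the existence of $\bigvee_X D$ and its membership in $A$. So (3) follows at once.

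For the converse $(3) \Rightarrow (1)$ under the assumption that $X$ is itself a monotone convergence space, I would first use $d$-closedness together with the fact that $X$ is a dcpo to conclude that $A$ is a sub-dcpo: any directed $D \subseteq A$ has a sup in $X$, and that sup sits in $A$. So $A$ is a dcpo in the induced order. It then remains to check the subspace topology on $A$ agrees with the Scott topology of $A$ qua dcpo, or at least is contained in it, which is what monotone convergence requires. Given $V = U \cap A$ with $U$ open in $X$, and a directed $D \subseteq A$ with $\bigvee_A D = a \in V$, the coincidence of suprema gives $\bigvee_X D = a \in U$; since $U$ is Scott open in $X$ (because $X$ is mcs), some $d \in D$ lies in $U$, hence in $V$. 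So $V$ is Scott open in $A$, and $A$ is a monotone convergence space.

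The main obstacle is the matching of suprema in the $(1) \Rightarrow (2)$ step: a priori the sup of $D$ inside $A$ and any sup of $D$ in $X$ could differ, and one must use convergence plus the upper-set property of open sets to rule this out. Everything else is bookkeeping with the definitions of sub-dcpo, $d$-closed, and the Scott topology.
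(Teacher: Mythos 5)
Your proof is correct, and in fact the paper itself offers no proof to compare against: this lemma is imported verbatim from Keimel--Lawson (\cite[Lemma~6.3]{KeiLaw2009}) as background material. Your argument is essentially the standard one from that source --- in particular you correctly isolate the one non-trivial point, namely that in $(1)\Rightarrow(2)$ the supremum of a directed $D\subseteq A$ computed in $A$ coincides with its supremum in $X$, which you settle via convergence of $D$ to $\sup_A D$ together with the fact that open sets are upper sets for the specialization order; the remaining implications are, as you say, bookkeeping.
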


The following result says that the D-completions are universal and hence they are unique up to isomorphism.

\begin{thm}{\rm\cite[Theorem 6.7]{KeiLaw2009}}\label{D-completion}
	Let $j\colon X\rightarrow Y$ be a topological embedding of a space $X$ into a monotone convergence space $Y$. Let $\tilde{X}=\text{cl}_{d}(j(X))$ be the d-closure of $j(X)$ in $Y$, equipped with the relative topology from $Y$. Then $k\colon X\rightarrow \tilde{X}$, the corestriction of $j$, is a universal $D$-completion, that is, for every continuous map $f$ from $X$ to a monotone convergence space $M$, there is a unique continuous map $\tilde{f}\colon\tilde{X}\rightarrow M$ such that $\tilde{f}\circ k=f$.
	
\end{thm}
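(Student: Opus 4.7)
The plan is to establish three facts in turn: that $\tilde X$ is a monotone convergence space, that $k(X)$ is d-dense in $\tilde X$, and then the universal property itself. First I would apply the preceding lemma. Since $Y$ is a monotone convergence space and $\tilde X = \text{cl}_d(j(X))$ is d-closed in $Y$, the equivalence of conditions in the lemma forces $\tilde X$ to be a monotone convergence space in its relative topology. The corestriction $k$ of $j$ remains a topological embedding. To check that $k(X) = j(X)$ is d-dense in $\tilde X$, take any d-closed $C \subseteq \tilde X$ containing $j(X)$; because $\tilde X$ is a sub-dcpo of $Y$, directed suprema computed in $\tilde X$ agree with those in $Y$, so $C$ is also d-closed in $Y$, and minimality of $\text{cl}_d$ forces $C \supseteq \tilde X$.

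Uniqueness comes next, and is the easier half. If $g_1, g_2 \colon \tilde X \to M$ are continuous with $g_1 \circ k = g_2 \circ k = f$, form the equalizer $E = \{y \in \tilde X : g_1(y) = g_2(y)\}$, which contains $k(X)$. I would show that $E$ is d-closed in $\tilde X$: for a directed $D \subseteq E$ with supremum $y$ in $\tilde X$, the set $D$ converges to $y$ in $\tilde X$, so by continuity of the $g_i$ and the monotone-convergence property of $M$ one obtains $g_i(y) = \bigvee g_i(D)$ for $i=1,2$, and the two suprema agree since $g_1$ and $g_2$ coincide on $D$. Thus $E$ is a d-closed subset of $\tilde X$ containing the d-dense set $k(X)$, so $E = \tilde X$ and $g_1 = g_2$.

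For existence I would extend $f$ to $\tilde X$ by transfinite recursion along the stages of the d-closure. Set $X_0 = k(X)$, define $X_{\alpha+1} = X_\alpha \cup \{\bigvee_{\tilde X} D : D \subseteq X_\alpha \text{ directed}\}$ and $X_\lambda = \bigcup_{\alpha < \lambda} X_\alpha$ at limit ordinals; by cardinality this chain stabilizes at some $X_{\alpha_0}$, which is d-closed in $\tilde X$ and contains $k(X)$, hence equals $\tilde X$. Define $\tilde f(k(x)) = f(x)$, and recursively $\tilde f(\bigvee D) = \bigvee_M \tilde f(D)$ for directed $D \subseteq X_\alpha$; the right-hand side exists since $M$ is a dcpo.

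The main obstacle is to verify, simultaneously by induction on $\alpha$, that (a) $\tilde f$ is well-defined and monotone on $X_\alpha$, (b) $\tilde f$ preserves directed suprema of elements of $X_\alpha$, and (c) the resulting map $\tilde f \colon \tilde X \to M$ is continuous. For well-definedness one must show that if $y = \bigvee D_1 = \bigvee D_2$ arises from two directed sets (possibly from different stages), then $\bigvee \tilde f(D_1) = \bigvee \tilde f(D_2)$; this is done by expanding each $\tilde f(z)$, $z \in D_1$, via its recursive definition from earlier stages and using inductive monotonicity to bound it by $\bigvee \tilde f(D_2)$, and symmetrically. Continuity is the most delicate step: for an open $V \subseteq M$ one has $\tilde f^{-1}(V) \cap k(X) = k(f^{-1}(V))$, which is open in the subspace $k(X)$, and one propagates openness outward through the stages $X_\alpha$ using the subspace topology inherited from $Y$ and the fact that opens of a monotone convergence space behave well under passage to directed suprema. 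Once (a)--(c) are in hand, $\tilde f \circ k = f$ by construction, completing the proof.
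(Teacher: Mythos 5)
The paper does not actually prove this statement; it is imported from Keimel--Lawson, so your argument can only be judged on its own terms. The first three stages of your plan are sound: the preceding lemma does make $\tilde X$ a monotone convergence space, the sub-dcpo argument for d-density of $k(X)$ is correct, and the equalizer argument for uniqueness is complete (a continuous map between monotone convergence spaces sends a directed set $D$ with supremum $y$ to a directed set converging to $g(y)$, and since open sets are upper this forces $g(y)=\bigvee g(D)$).

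The gap is in the existence half, at precisely the two points you flag. For well-definedness you propose to show $\bigvee\tilde f(D_1)=\bigvee\tilde f(D_2)$ when $\bigvee D_1=\bigvee D_2=y$ by ``expanding recursively and using inductive monotonicity.'' Monotonicity of $\tilde f$ on $X_\alpha$ cannot deliver this: for $z\in D_1$ we only know $z\le\bigvee D_2$ where $\bigvee D_2\notin X_\alpha$, and the induced order on $X_\alpha$ may place $z$ incomparable to every element of $D_2$, so nothing order-theoretic forces $\tilde f(z)\le\bigvee_M\tilde f(D_2)$ --- the recursive ``extension'' need not even be monotone. What actually rescues the construction is topological: $z\le\bigvee D_2$ forces $D_2\to z$ in $\tilde X$ (opens are upper and Scott open), and then \emph{continuity of $\tilde f|_{X_\alpha}$ in the subspace topology}, not its monotonicity, yields $\tilde f(z)\le\bigvee\tilde f(D_2)$. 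So your item (c) is not merely ``the most delicate step''; it is a prerequisite for (a), and you give no mechanism for it. The mechanism that works is: for each open $V\subseteq M$ fix one open $U$ of $\tilde X$ with $U\cap k(X)=k(f^{-1}(V))$, and prove by transfinite induction that $U\cap X_\alpha=(\tilde f|_{X_\alpha})^{-1}(V)$ for every $\alpha$, using that $U$ is Scott open in the dcpo $\tilde X$ and $V$ is Scott open in $M$, so that membership of $\bigvee D$ in either set is decided by membership of some element of $D$; with this invariant (a)--(c) close up simultaneously. The standard proof avoids the transfinite construction entirely: one checks that $k^{-1}\colon\mathcal O(\tilde X)\to\mathcal O(X)$ is an isomorphism (for opens $U,U'$ of $\tilde X$, the set $(\tilde X\setminus U)\cup U'$ is d-closed, so agreement on the d-dense set $k(X)$ forces $U=U'$), extends $\eta_M\circ f$ to a continuous map $\tilde X\to M^s$ via sobriety, and then observes that the preimage of the d-closed subset $M\subseteq M^s$ is d-closed in $\tilde X$ and contains $k(X)$, hence is all of $\tilde X$. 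You should either adopt that route or supply the missing continuity invariant explicitly.
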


\begin{rem}{\rm
For any $\mathbf{T_0}$ space $X$, there is a natural monotone convergence space $C(X)$ (the lattice of closed subsets of $X$) with the upper topology such that $\eta=\lambda x.{\downarrow}x\colon X\rightarrow C(X)$ is an embedding.
We call the d-closure of $\eta(X)$ in $C(X)$ the standard D-completion of $X$ and denote it by $X^d$. }
\end{rem}

\begin{prop}\label{osod}
	Let $X$ be a $\mathbf{T_0}$ space and $X^{d}$ be its D-completion. Then $\mathcal{O}(X)\cong \mathcal{O}(X^{d})$.
\end{prop}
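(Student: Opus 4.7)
The plan is to use the Sierpinski space $\mathbb{S}=\{\bot,\top\}$, with $\{\top\}$ as its unique nontrivial open set, as a classifying object for open sets. Since $\mathbb{S}$ is finite and $\mathbf{T_0}$, its specialization order is the two-point chain with $\{\top\}$ Scott-open, so $\mathbb{S}$ is itself a monotone convergence space; and for any $\mathbf{T_0}$ space $Y$, the correspondence $U\leftrightarrow \chi_U$ (with $\chi_U^{-1}(\top)=U$) is a bijection between $\mathcal{O}(Y)$ and $TOP(Y,\mathbb{S})$.

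I would then apply the universal property of the D-completion (Theorem~\ref{D-completion}) with target $\mathbb{S}$: every continuous $f\colon X\to\mathbb{S}$ extends uniquely to a continuous $\tilde{f}\colon X^d\to\mathbb{S}$ with $\tilde{f}\circ k=f$, where $k\colon X\to X^d$ is the D-completion embedding. Thus precomposition with $k$ induces a bijection $TOP(X^d,\mathbb{S})\to TOP(X,\mathbb{S})$. Translating through the Sierpinski correspondence yields a bijection
\[
\Phi\colon \mathcal{O}(X^d)\to\mathcal{O}(X),\qquad V\mapsto k^{-1}(V),
\]
whose inverse sends $U$ to the unique open of $X^d$ corresponding to the extension $\tilde{\chi}_U$.

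Finally I would observe that $\Phi$ preserves arbitrary unions and finite intersections, since preimages do; being a bijective lattice homomorphism between complete lattices, $\Phi$ is automatically a lattice isomorphism, giving $\mathcal{O}(X)\cong\mathcal{O}(X^d)$. I do not foresee a real obstacle: the substance is already packaged inside the universal property of the D-completion, and the argument is essentially its instantiation at the classifying space $\mathbb{S}$. If one preferred to avoid $\mathbb{S}$, the only nontrivial point is injectivity of $V\mapsto k^{-1}(V)$, i.e.\ that two opens of the monotone convergence space $X^d$ whose traces on the d-dense subset $k(X)$ coincide must be equal; the Sierpinski uniqueness step handles this cleanly via the uniqueness clause in Theorem~\ref{D-completion}.
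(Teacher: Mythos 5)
Your proof is correct, but it takes a genuinely different route from the paper's. The paper realizes $X^{d}$ as the d-closure of $X$ inside the soberification $X^{s}$ (legitimate because $X^{s}$ is a monotone convergence space, so Theorem~\ref{D-completion} lets one build the D-completion there), observes that $X^{s}$ is then simultaneously a soberification of $X$ and of $X^{d}$, and invokes the standard fact that soberification induces an isomorphism of open-set lattices. You instead instantiate the universal property of the D-completion at the Sierpinski space, turning the existence-and-uniqueness clause of Theorem~\ref{D-completion} into a bijection $TOP(X^{d},\mathbb{S})\to TOP(X,\mathbb{S})$ and hence $V\mapsto k^{-1}(V)$ on opens; your closing remark that a bijective lattice homomorphism is automatically an order isomorphism is also right (injectivity plus preservation of binary joins gives order reflection). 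The trade-off: the paper's argument is shorter but leans on soberification machinery external to the statement; yours is self-contained given only Theorem~\ref{D-completion} and the classifying-object correspondence $\mathcal{O}(Y)\cong TOP(Y,\mathbb{S})$, and it is in fact the same technique the authors deploy later for Theorem~\ref{Iso} on Scott completions --- so your proof makes visible that Proposition~\ref{osod} and Theorem~\ref{Iso} are two instances of one argument about completions with a universal property against targets containing $\mathbb{S}$.
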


\begin{proof}
	Let $f\colon X\rightarrow X^{s}$ be the soberification map. Since $X^{s}$ is a d-space, the D-completion $X^{d}$ of $X$ can be regarded as a subspace of $X^{s}$ by Theorem \ref{D-completion}, then $X^{s}$ is also a soberification of $X^{d}$. We conclude that 
	$\mathcal{O}(X)\cong \mathcal{O}(X^{d})$ directly.
\end{proof}

\begin{thm}
Let $X$ be a monotone determined space and $X^d$ be the D-completion of $X$. Then $X^d$ is a Scott space, i.e. the topology on $X^d$ equals to the Scott topology on $(X^d,\leq)$.
\end{thm}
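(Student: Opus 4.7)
My plan is to invoke the universal property of the $D$-completion against the dcpo $X^d$ equipped with its Scott topology $\sigma=\sigma(X^d,\leq)$. Because $X^d$ is already a monotone convergence space, every one of its open sets is Scott open, so $\mathcal{O}(X^d)\subseteq\sigma$ automatically. The content of the theorem is the reverse inclusion $\sigma\subseteq\mathcal{O}(X^d)$, equivalently, that the identity map $X^d\to(X^d,\sigma)$ is continuous. Throughout, let $\eta\colon X\hookrightarrow X^d$ denote the canonical embedding.

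The first real step is to verify that $\eta\colon X\to(X^d,\sigma)$ is continuous. Pick any $U\in\sigma$ and put $V=\eta^{-1}(U)$. Since $X$ is monotone determined, it suffices to show $V$ is directed-open, so take $(D,x)\in DLim(X)$ with $x\in V$. The image $\eta(D)$ is directed in the d-space $X^d$ and hence converges to its supremum $d^{\ast}=\bigvee_{X^d}\eta(D)$; continuity of $\eta\colon X\to X^d$ simultaneously gives $\eta(D)\to\eta(x)$. As $\eta(D)\subseteq{\downarrow}d^{\ast}=\overline{\{d^{\ast}\}}$ and this set is closed, the limit $\eta(x)$ lies in ${\downarrow}d^{\ast}$, giving $\eta(x)\leq d^{\ast}$. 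Because $U$ is an upper set containing $\eta(x)$, we have $d^{\ast}\in U$, and Scott openness of $U$ forces $\eta(D)\cap U\neq\emptyset$, i.e.\ $D\cap V\neq\emptyset$. Hence $V$ is open in $X$.

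The space $(X^d,\sigma)$ is itself a monotone convergence space, so Theorem \ref{D-completion} produces a unique continuous extension $\tilde\eta\colon X^d\to(X^d,\sigma)$ of $\eta\colon X\to(X^d,\sigma)$. Composing $\tilde\eta$ with the continuous identity $(X^d,\sigma)\to X^d$ yields a continuous self-map of $X^d$ agreeing with $\mathrm{id}_{X^d}$ on $\eta(X)$. Continuous maps between d-spaces preserve directed suprema, so the equalizer of any two such maps is d-closed; since $\eta(X)$ is d-dense in $X^d$, the two continuous extensions must coincide, forcing $\tilde\eta=\mathrm{id}_{X^d}$ as functions. The continuity of $\tilde\eta$ therefore delivers the inclusion $\sigma\subseteq\mathcal{O}(X^d)$.

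The main obstacle is the middle step: this is the only place where monotone determinedness of $X$ enters, and it requires carefully combining directed convergence in the d-space $X^d$ with the Scott openness and upper-set property of $U$. The final paragraph is essentially formal, resting on Theorem \ref{D-completion} together with the standard fact that continuous extensions from a d-dense subset into a d-space are unique, so I expect no further difficulty there.
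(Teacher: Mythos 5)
Your proof is correct, but it takes a genuinely different route from the paper's in its second half. Both arguments pivot on the same key fact --- that $\eta\colon X\to\Sigma(X^d,\leq)$ is continuous when $X$ is monotone determined --- which the paper simply cites as known, whereas you prove it in full via the directed-open criterion (and that verification is sound: $\eta(D)$ converges both to $\eta(x)$ and to its supremum $d^{\ast}$, closedness of ${\downarrow}d^{\ast}$ gives $\eta(x)\leq d^{\ast}$, and Scott openness finishes). From there the paper argues locally: for each $A$ in a Scott-open $\mathcal{U}$ it shows $A\in\mathrm{cl}_d\{{\downarrow}x:x\in A\}$ by splitting $X^d$ into two d-closures and excluding $\Diamond(X\setminus A)$, extracts some ${\downarrow}x\in\mathcal{U}$ with $x\in A$, and then exhibits the explicit neighborhood $\Diamond U\cap X^d\subseteq\mathcal{U}$ with $U=\eta^{-1}(\mathcal{U})$; this has the side benefit of showing that the subbasic upper-topology sets $\Diamond U\cap X^d$ already generate all Scott opens. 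You instead argue globally: the universal property of Theorem \ref{D-completion} extends $\eta$ to a continuous $\tilde\eta\colon X^d\to(X^d,\sigma)$, and the standard equalizer/d-density argument identifies $\tilde\eta$ with the identity, so $\sigma\subseteq\mathcal{O}(X^d)$. Your version is more conceptual and cleanly isolates where monotone determinedness enters (only in establishing continuity of $\eta$ into the Scott topology); indeed your second half proves the more general statement that $X^d$ is a Scott space whenever $\eta\colon X\to\Sigma(X^d,\leq)$ is continuous. The paper's version is more concrete and yields the explicit form of the open sets. No gaps.
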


\begin{proof}
Since $X^d$ is a monotone convergence space, we only need to show that every Scott open set of $(X^d,\leq)$ is open in $X^d$. For any Scott open set $\mathcal{U}$ of $(X^d,\leq)$ and any closed set $A\in\mathcal{U}$, notice that $$A\in X^d=\text{cl}_d\{{\downarrow}x:x\in X\}=\text{cl}_d\{{\downarrow}x:x\in A\}\cup\text{cl}_d\{{\downarrow}x:x\in X\backslash A\}.$$
Since $\{{\downarrow}x:x\in X\backslash A\}\subseteq\Diamond(X\backslash A)=\{F\in C(X):F\cap(X\backslash A)\neq\emptyset\}$ and $\Diamond(X\backslash A)$ is a upper set hence is a d-closed set in $C(X)$, it follows that $\text{cl}_d\{{\downarrow}x:x\in X\backslash A\}\subseteq\Diamond(X\backslash A)$. Then we have that $A\not\in\text{cl}_d\{{\downarrow}x:x\in X\backslash A\}$.
Hence $A$ is included in $\text{cl}_d\{{\downarrow}x:x\in A\}\subseteq X^d$. Because $\mathcal{U}$ is d-open set of $X^d$ and $\text{cl}_d\{{\downarrow}x:x\in A\}$ is d-closed set, we obtain that $\mathcal{U}\cap\{{\downarrow}x:x\in A\}\neq\emptyset$. By using the fact that $\eta\colon X\rightarrow \Sigma(X^d,\leq)$ is continuous since $X$ is a monotone determined space, we take the inverse image of $\mathcal{U}$ and obtain an open set $U=\eta^{-1}(\mathcal{U})$. Obviously $U\cap A\neq\emptyset$. Hence we have that  $A\in \Diamond U\cap X^d\subseteq\mathcal{U}$. Therefore $\mathcal{U}$ is an open subset of $X^d$.
\end{proof}

\begin{rem}{\rm
The aforementioned theorem also appeared in \cite[Proposition 6.13]{Batt13} with a different proof.}
\end{rem}

The following results are direct consequences of the above theorem.
\begin{cor}
\begin{enumerate}[{\rm(1)}]
\item {\rm\cite[Theorem~7.4]{KeiLaw2009}}
Let $P$ be a poset. The D-completion of $\Sigma P$ is a Scott space. 
\item {\rm\cite[Theorem~4.7]{ZHA2021}}
Let $X$ be a monotone determined space. $(X^d,i)$ is a Scott completion of $X$.
\end{enumerate}	

\end{cor}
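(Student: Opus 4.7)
The plan is to derive both parts of the corollary as direct consequences of the preceding theorem applied to appropriately chosen monotone determined spaces, with essentially no additional work beyond matching definitions.

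For part (1), I would first recall from the list of examples in Section~2 that any poset $P$ equipped with the Scott topology, that is, $\Sigma P$, is a monotone determined space. Applying the preceding theorem to $X := \Sigma P$ then yields immediately that the topology on $(\Sigma P)^d$ equals the Scott topology of the dcpo $((\Sigma P)^d, \leq)$, which is exactly the statement of \cite[Theorem~7.4]{KeiLaw2009}.

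For part (2), the hypothesis that $X$ is a monotone determined space lets me apply the preceding theorem verbatim, obtaining that $X^d$ with its inherited topology is a Scott space. To conclude that $(X^d, i)$ is a Scott completion in the sense of \cite{ZHA2021}, I would then assemble three standard facts about the D-completion: (i) $X^d$ with its specialization order is a dcpo, since it is a monotone convergence space; (ii) $i \colon X \to X^d$ is a topological embedding; and (iii) the image $i(X)$ is d-dense in $X^d$ by construction. Combined with the theorem, these properties match the definition of Scott completion given in \cite[Definition~4.6]{ZHA2021}, so the conclusion follows.

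The main obstacle, to the extent there is one, is purely terminological rather than mathematical: one must cross-reference the Zhang--Shi--Li definition of a Scott completion and verify that the conditions listed there reduce to being a d-dense topological embedding into a Scott space whose underlying poset is a dcpo, so that the universality of the D-completion from Theorem~\ref{D-completion} carries over. Since the real content, namely that the topology on $X^d$ coincides with the Scott topology, has already been established in the preceding theorem, nothing further needs to be proved.
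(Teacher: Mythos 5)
Your proposal is correct and takes essentially the same route as the paper, which offers no separate argument and simply records both items as direct consequences of the preceding theorem, using for part (1) the fact that $\Sigma P$ is a monotone determined space. The only refinement worth making in part (2) is that the Scott completion is defined in the paper by a universal property (factorization of continuous maps into Scott spaces), so the cleanest way to close the argument is to observe that every Scott space is a monotone convergence space, whence the universal property of the D-completion specializes verbatim to the universal property required of a Scott completion, rather than verifying a structural characterization via d-density and embeddings.
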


The D-completion of any monotone determined space is a Scott space while there exists a non-monotone determined space whose D-completion is a Scott space. This example also can be found in \cite{Batt13}.

\begin{exmp}\label{Non-directed}{\rm
Let $D$ be a set $\mathbb{N}\times\mathbb{N}\cup\{a_n:n\in\mathbb{N}\}\cup\{\top\}$. The order on $D$ is defined as follows:
$x\leq y$ iff
\begin{itemize}
\item $y=\top$;
\item There are natural numbers $n\leq m$ such that $x=a_n,y=a_m$;
\item There are natural numbers $n$ and $m_1\leq m_2$ such that $x=(n,m_1),y=(n,m_2)$;
\item There are natural numbers $n$ and $m$ such that $x=(n,m),y=a_n$. 
\end{itemize}
$D$ can be easily depicted as in Figure 1.
Let $X=\mathbb{N}\times\mathbb{N}\cup\{\top\}$ with the subspace topology of $\Sigma D$. It is easy to see that 
$\Sigma D$ is a D-completion of $X$.  Notice that $\{\top\}$
is always a directed open set of $X$ which is not open in $X$. Hence $X$ is not a monotone determined space.}
\end{exmp}

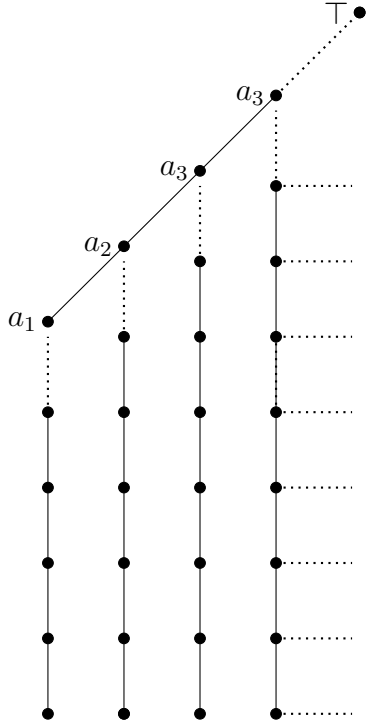
\begin{figure}
	\centering	
	\begin{tikzpicture}
		\draw (-1,0)--(-1,4);
		\draw (0,0)--(0,5);
		\draw (1,0)--(1,6);
		\draw (2,0)--(2,7);
		
		\draw[fill]  (1,0) circle (2pt);	\draw[fill]  (1,1) circle (2pt);	\draw[fill]  (1,2) circle (2pt);	\draw[fill]  (1,3) circle (2pt);	\draw[fill]  (1,4) circle (2pt);	\draw[fill]  (1,5) circle (2pt);	\draw[fill]  (1,6) circle (2pt);	\draw[fill]  (2,0) circle (2pt);
		\draw[fill]  (0,5) circle (2pt);
		\draw[fill] (0,0) circle (2pt); \draw[fill] (0,0) circle (2pt); \draw[fill] (-1,0) circle (2pt);	
		\draw[fill] (0,1) circle (2pt);\draw[fill] (2,1) circle (2pt);\draw[fill] (-1,1) circle (2pt); 
		\draw[fill] (0,2) circle (2pt);\draw[fill] (2,2) circle (2pt);\draw[fill] (-1,2) circle (2pt); 
		\draw[fill] (0,3) circle (2pt);\draw[fill] (2,3) circle (2pt);\draw[fill] (-1,3) circle (2pt); 
		\draw[fill] (0,4) circle (2pt);\draw[fill] (2,4) circle (2pt);\draw[fill] (-1,4) circle (2pt); 
	    \draw[fill] (2,5) circle (2pt);\draw[fill] (2,6) circle (2pt);\draw[fill] (2,7) circle (2pt);
	    \draw[fill] (3.1,9.3) circle (2pt);
	
		\draw[thick,dotted] (-1,4)--(-1,5);\draw[thick,dotted] (2,4)--(2,5);
		\draw[thick,dotted] (0,5)--(0,6);
		\draw[thick,dotted] (1,6)--(1,7);
		\draw[thick,dotted] (2,7)--(2,8);
		\draw[thick,dotted]  (2,8.2)--(3,9.2);
		\draw[thick,dotted] (2,0)--(3,0);
		\draw[thick,dotted] (2,1)--(3,1);
		\draw[thick,dotted] (2,2)--(3,2);
		\draw[thick,dotted] (2,3)--(3,3);
		\draw[thick,dotted] (2,4)--(3,4);
		\draw[thick,dotted] (2,5)--(3,5);
		\draw[thick,dotted] (2,6)--(3,6);
		\draw[thick,dotted] (2,7)--(3,7);

		\draw [fill] (-1,5.2)node[anchor=east]{$a_1$} circle (2pt);\draw (-1,5.2)--(0,6.2)--(1,7.2)--(2,8.2);
		\draw [fill] (0,6.2)node[anchor=east]{$a_2$} circle (2pt);
		\draw [fill] (1,7.2)node[anchor=east]{$a_3$} circle (2pt);
		\draw [fill] (2,8.2)node[anchor=east]{$a_3$} circle (2pt);
		\draw [fill] (3.1,9.3)node[anchor=east]{$\top$} circle (2pt);		
	\end{tikzpicture}
	\caption{$\mathbb{N}\times\mathbb{N}\cup\{a_n:n\in\mathbb{N}\}\cup\{\top\}$}  
\end{figure}

The concept of Scott completion is introduced by Zhang, Shi and Li in \cite{ZHA2021}. It has played an important role in constructing free dcpo algebras over general dcpos, see \cite{CK2022,CKL23}. But the notion does not seem to have been investigated much. We will give more examples of topological spaces which do not have Scott completions in this section.

\begin{defn}{\rm\cite{ZHA2021}}
	A Scott completion $(Y, f)$ of a space $X$ is a Scott space $Y$ together with
	a continuous map $f\colon X\rightarrow Y$ such that for any Scott space $Z$ and continuous map
	$g\colon X\rightarrow Z$, there exists a unique continuous map $\tilde{g}$ satisfying $g =\tilde{g}\circ f$.
	
\end{defn}

\begin{thm}\label{Iso}
	Let $X$ be a $\mathbf{T_0}$ space and $(\tilde{X},i)$ be a Scott-completion of $X$, then $\mathcal{O}(X)\cong\mathcal{O}(\tilde{X})$. Hence $C(X)\cong C(\tilde{X})$.
\end{thm}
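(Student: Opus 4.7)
The plan is to use the Sierpiński space $\mathbb{S}=\Sigma\{0,1\}$ as a test object together with the universal property of the Scott completion. For any $\mathbf{T_0}$ space $Y$, the assignment $U\mapsto \chi_U$ (the characteristic function, with the convention $\chi_U^{-1}(1)=U$) gives a natural bijection $\mathcal{O}(Y)\cong\mathrm{TOP}(Y,\mathbb{S})$ which preserves and reflects the pointwise order. Since $\mathbb{S}$ is itself a Scott space, the universal property of $(\tilde{X},i)$ can be applied with target $\mathbb{S}$, and I will show that the pullback map $i^{*}\colon\mathcal{O}(\tilde{X})\to\mathcal{O}(X)$, $U\mapsto i^{-1}(U)$, is a lattice isomorphism.

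First I would check surjectivity: given $V\in\mathcal{O}(X)$, the map $\chi_V\colon X\to\mathbb{S}$ is continuous and therefore extends to a unique continuous $\tilde{f}\colon\tilde{X}\to\mathbb{S}$ with $\tilde{f}\circ i=\chi_V$; then $U:=\tilde{f}^{-1}(1)\in\mathcal{O}(\tilde{X})$ satisfies $i^{-1}(U)=V$. Next I would check injectivity using the \emph{uniqueness} half of the universal property: if $i^{-1}(U_1)=i^{-1}(U_2)$, then $\chi_{U_1}$ and $\chi_{U_2}$ are both continuous maps $\tilde{X}\to\mathbb{S}$ which lift the common map $\chi_{i^{-1}(U_1)}\colon X\to\mathbb{S}$ through $i$, so by uniqueness $\chi_{U_1}=\chi_{U_2}$ and hence $U_1=U_2$. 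The map $i^{*}$ is clearly order-preserving, and it reflects order because if $i^{-1}(U_1)\subseteq i^{-1}(U_2)$, then $i^{-1}(U_1\cup U_2)=i^{-1}(U_2)$, so injectivity forces $U_1\cup U_2=U_2$, i.e., $U_1\subseteq U_2$. Thus $i^{*}$ is a lattice isomorphism $\mathcal{O}(\tilde{X})\cong\mathcal{O}(X)$, and composing with complementation (an order anti-isomorphism on both sides) yields $C(X)\cong C(\tilde{X})$.

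I do not anticipate a serious obstacle: the argument is essentially a formal consequence of the universal property. The only delicate point is the injectivity step, which hinges on the fact that the Sierpiński space is already a Scott space (so that the universal property applies to it) together with the uniqueness, not merely existence, of the lift.
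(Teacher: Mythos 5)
Your proof is correct, and it rests on the same key idea as the paper's: test the universal property of the Scott completion against the Sierpi\'nski space, which is itself a Scott space, and transport the result to the open-set lattices via $\mathcal{O}(Y)\cong \mathrm{TOP}(Y,\mathbb{S})$. The difference lies in how the order isomorphism is extracted. The paper works with the forward map $\theta\colon[X\rightarrow 2]\rightarrow[\tilde{X}\rightarrow 2]$, $f\mapsto\bar{f}$, and must prove directly that $\theta$ is monotone; this is the delicate step, carried out by showing that every point $y\in\bar{f_1}^{-1}(1)$ lies above some point of $i(X)\cap\bar{f_1}^{-1}(1)$ --- a density-type argument that also yields, in Remark \ref{emb}, that $i$ is almost open and hence an embedding, a fact the paper reuses later. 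You instead work with the inverse map $i^{*}=\theta^{-1}$, for which monotonicity is trivial, and obtain order-reflection purely formally from injectivity via the observation that $i^{-1}(U_1)\subseteq i^{-1}(U_2)$ forces $i^{-1}(U_1\cup U_2)=i^{-1}(U_2)$ and hence $U_1\cup U_2=U_2$. This union trick is cleaner and avoids the density argument entirely, at the mild price of not producing the additional information about $i$ (almost openness, hence being an embedding) that the paper's version of the argument supplies.
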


\begin{proof}
	Let $[Y\rightarrow 2]$ denote the set of continuous functions from $Y$ to the Sierpinski space $2$ with the pointwise order. It is well known that $\mathcal{O}(Y)\cong [Y\rightarrow 2]$. For any $f\in [X\rightarrow 2]$, there exists only one $\bar{f}\in [\tilde{X}\rightarrow 2]$ such that $f=\tilde{f}\circ i$ since $(\tilde{X},i)$ is a Scott-completion of $X$. Hence we obtain  a map $\theta\colon[X\rightarrow 2]\rightarrow[\tilde{X}\rightarrow 2]$ as follows
	$$\theta(f)=\bar{f}.$$
	Clearly $\theta$ is bijective.
	
	$\theta$ is monotone. For any $f_1,f_2\in [X\rightarrow 2]$ with $f_1\leq f_2$. We need to show that $\bar{f_1}\leq\bar{f_2}$. Without loss of generality, we consider $y\in\bar{X}$ with $\bar{f_1}(y)=1$, i.e., $y\in \bar{f_1}^{-1}(1)$. We claim that $i(X)\cap{\downarrow}y\cap\bar{f_1}^{-1}(1)\neq\emptyset$.
	Otherwise we have the indicator function $\mathbbm{1}_{U}$ of $U=({\downarrow}y)^c\cap\bar{f_1}^{-1}(1)$ with $\mathbbm{1}_{U}\circ i=\bar{f}_1\circ i=f$. It follows that $\mathbbm{1}_{U}=\bar{f}_1$. Then $U=\mathbbm{1}_{U}^{-1}(1)=\bar{f}^{-1}_1(1)$. This is impossible. Back to $i(X)\cap{\downarrow}y\cap\bar{f_1}^{-1}(1)\neq\emptyset$.
	We have some $x\in X$ with $i(x)\in {\downarrow}y\cap\bar{f_1}^{-1}(1)$. It means that $f_1(x)=\bar{f}_1(i(x))=1$. Since $f_1\leq f_2$, we obtain that $f_2(x)=1$. Then $\bar{f}_2(y)\geq\bar{f}_2(i(x))=f_2(x)=1$.
	
	It is easy to see that $\theta$ reflects order. Hence $\theta$ is an order isomorphism. So we obtain that $\mathcal{O}(X)\cong\mathcal{O}(\tilde{X})$.
\end{proof}

\begin{rem}\label{emb}{\rm
		The above proof is similar to the argument of the order isomorphism between the open lattice of a topological space
		and the open lattice of its soberification.
		The map $i^{-1}\colon\mathcal{O}(\tilde{X})\rightarrow\mathcal{O}(X)$
		is an order isomorphic map. Furthermore, $i\colon X\rightarrow\tilde{X}$ is an embedding since $i$ is injective, continuous and almost open.}
\end{rem}

\begin{cor}\label{nScott}
	\begin{enumerate}[{\rm(1)}]
		\item Every  $T_1$ space endowed with non-discrete topology does not have a Scott completion.
		\item Every sober but non-Scott space does not have a Scott-completion.
	\end{enumerate}
	
\end{cor}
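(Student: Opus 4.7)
The plan is to apply Theorem~\ref{Iso} and Remark~\ref{emb} in tandem: if a Scott completion $(\tilde X, i)$ of $X$ existed, then $i\colon X\to\tilde X$ would be a topological embedding and $i^{-1}\colon\mathcal{O}(\tilde X)\to\mathcal{O}(X)$ would be a frame isomorphism (equivalently, $C(X)\cong C(\tilde X)$). In each part I will exploit this to force $X\cong\tilde X$, which would make $X$ a Scott space and contradict the hypothesis.

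For (2), I will pass to sobrifications. The frame isomorphism yields a homeomorphism $\phi\colon X^s\to\tilde X^s$, and naturality of the sobrification map gives $\eta_{\tilde X}\circ i=\phi\circ\eta_X$. Since $X$ is sober, $\eta_X$ is a homeomorphism, so $\eta_{\tilde X}\circ i$ is a homeomorphism onto $\tilde X^s$; in particular $\eta_{\tilde X}$ is surjective. Because $\eta_{\tilde X}$ is always an injective embedding for a $T_0$ space, it is then a homeomorphism, so $\tilde X$ is sober and $i=\eta_{\tilde X}^{-1}\circ\phi\circ\eta_X$ is itself a homeomorphism. Thus $X\cong\tilde X$ is a Scott space, contradicting the assumption.

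For (1), the $T_1$ hypothesis makes the specialization order on $X$ trivial, so $i(X)\subseteq\tilde X$ is an antichain. I will first show that for every $y\in\tilde X$ the set $A_y=i^{-1}(\downarrow y)$ is nonempty: otherwise $\tilde X\setminus\downarrow y$ and $\tilde X$ would be distinct Scott opens with the same image under $i^{-1}$, contradicting bijectivity. Combining the antichain property of $i(X)$ with the uniqueness in $C(X)\cong C(\tilde X)$, the relation $y\leq i(x)$ then forces $\downarrow y=\downarrow i(x)$ and hence $y=i(x)$, so $i(X)$ is exactly the set of minimal elements of $\tilde X$. The decisive step is to observe that the complement $\mathcal{E}=\tilde X\setminus i(X)$ is Scott open: it is upward-closed by minimality of $i(X)$, and directed-accessible because any directed subset of the antichain $i(X)$ is a singleton whose sup lies in $i(X)$, not in $\mathcal{E}$. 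Hence $i^{-1}(\mathcal{E})=\emptyset=i^{-1}(\emptyset)$, and bijectivity of $i^{-1}$ forces $\mathcal{E}=\emptyset$. Therefore $i$ is surjective, $X\cong\tilde X$ is a Scott space, but the Scott topology of the trivially-ordered $X$ is the discrete topology, contradicting the non-discreteness of $X$.

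The main obstacle I anticipate is the minimality claim in (1), which requires a careful interplay between the almost-openness of $i$ and the $T_1$ property; once that is secured, the Scott-openness of $\mathcal{E}$ follows cleanly, and bijectivity of $i^{-1}$ closes the argument. Part (2) is essentially a diagram chase among sobrifications.
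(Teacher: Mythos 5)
Your proposal is correct. Part (2) is essentially the paper's argument: the paper also passes to sobrifications, notes that $r\circ i\colon X\to\tilde X^s$ is again a sobrification of the already-sober $X$, and derives the same contradiction; your naturality diagram just makes the same chase explicit. Part (1), however, takes a genuinely different route. The paper stays inside the lattice: writing the completion as $\Sigma D$, it matches minimal nonempty closed sets of $X$ (the singletons, by $T_1$) with minimal nonempty Scott-closed sets $\{x\}$, $x\in\mathrm{Min}\,D$, deduces that $D$ is an antichain, hence that $\Gamma(D)\cong C(X)$ is completely distributive, hence that $X$ is a c-space, and finally that a $T_1$ c-space is discrete. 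You instead work at the point level of $\tilde X$: after identifying $i(X)$ with $\mathrm{Min}\,\tilde X$ (your nonemptiness-of-$i^{-1}(\mathop{\downarrow}y)$ argument is sound, since $\tilde X\setminus\mathop{\downarrow}y$ and $\tilde X$ would otherwise have the same preimage), you observe that $\tilde X\setminus i(X)$ is Scott open with empty preimage, so bijectivity of $i^{-1}$ on opens forces $i$ to be surjective; then $X\cong\tilde X$ carries the Scott topology of an antichain, which is discrete. Your version buys a more elementary contradiction (no appeal to complete distributivity or to the fact that $T_1$ c-spaces are discrete) and, as a bonus, shows directly that the putative completion map would have to be a homeomorphism; the paper's version is shorter once the c-space machinery is available and keeps the argument uniform with the lattice-theoretic theme of the rest of the paper. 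Both are valid.
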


\begin{proof}
	(1) Assume that there is a $T_1$ space $X$ with non-discrete topology such that it has a Scott completion $(\Sigma D,i)$, where $D$ is a dcpo. By Theorem \ref{Iso}, $\mathcal{O}(X)\cong\sigma(D)$. It follows that $C(X)\cong\Gamma(D)$. Notice that any singleton set is a minimal element in $C(X)\backslash\{\emptyset\}$. Since every minimal nonempty Scott closed of $D$ must be the form of $\{x\}$ ($x\in \text{Min}D$), then $D={\uparrow}\text{Min}D$. It is easy to see that $D$ has the discrete order. Hence $\Gamma(D)$ is a completely distributive lattice. It follows that $X$ is a c-space. Hence $X$ is a discrete space. Contradiction. 
	
	(2) Assume that there is a sober space $X$ that is not a Scott space and it has a Scott-completion $(\tilde{X},i)$. Then $i(X)\subsetneqq\tilde{X}$.
	Consider the soberification map $r\colon \tilde{X}\rightarrow\tilde{X}^s$, then $r\circ i\colon X\rightarrow\tilde{X}^s$ is also a soberification map. This is impossible.
\end{proof}

The following result is a direct application of  Theorem $\ref{Iso}$.
\begin{cor}{\rm\cite[Theorem 4.1]{Kou01}}
	The category of continuous dcpos with Scott continuous maps is not reflective in the category of quasicontinuous dcpos with Scott continuous maps. 
\end{cor}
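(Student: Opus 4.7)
The plan is to argue by contradiction, leveraging the argument underlying Theorem \ref{Iso} with the Sierpinski target. Suppose that the category of continuous dcpos were reflective in the category of quasicontinuous dcpos with Scott continuous morphisms. Then every quasicontinuous dcpo $Q$ would admit a reflection $(c(Q),\eta_Q)$ with $c(Q)$ a continuous dcpo and $\eta_Q\colon Q\to c(Q)$ Scott continuous, satisfying the universal property that every Scott continuous map from $Q$ into any continuous dcpo factors uniquely through $\eta_Q$. First, I fix a quasicontinuous dcpo $Q$ that fails to be continuous; the existence of such examples is well known in domain theory.

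Next, I transport the argument of Theorem \ref{Iso} using the Sierpinski space $2$ in place of an arbitrary Scott-space target. Since $2$ is itself a continuous dcpo (a two-element chain), it lies in the subcategory of continuous dcpos, so the universal property yields, for each continuous $f\colon\Sigma Q\to 2$, a unique $\bar{f}\colon\Sigma c(Q)\to 2$ with $\bar{f}\circ\eta_Q=f$. The indicator-function trick in the proof of Theorem \ref{Iso} (taking $U=({\downarrow}y)^c\cap\bar{f}^{-1}(1)$ to force a contradiction from $i(X)\cap{\downarrow}y\cap\bar{f}^{-1}(1)=\emptyset$) is purely formal and uses only the universal property against the Sierpinski target, so it applies verbatim. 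This yields an order isomorphism $\theta\colon[\Sigma Q\to 2]\to[\Sigma c(Q)\to 2]$, and via the standard identification $\mathcal{O}(\Sigma X)\cong[\Sigma X\to 2]$ I obtain $\sigma(Q)\cong\sigma(c(Q))$.

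Finally, I invoke the Hofmann--Lawson characterization (see \cite{CON03}): for a dcpo $L$, the lattice $\sigma(L)$ is completely distributive if and only if $L$ is a continuous dcpo. Since $c(Q)$ is continuous, $\sigma(c(Q))$ is completely distributive; transporting across the isomorphism established above, $\sigma(Q)$ is completely distributive as well, forcing $Q$ to be continuous and contradicting the choice of $Q$. The main obstacle is the mismatch between a reflection, which is universal only for continuous-dcpo targets, and a Scott completion, which is universal against \emph{all} Scott spaces; this is precisely what sidesteps a direct invocation of Theorem \ref{Iso}. The resolution is the observation that the proof of Theorem \ref{Iso} consumes only a single target, the Sierpinski space, which happens to be a continuous dcpo, so the weaker universal property of the reflection already suffices to run that argument.
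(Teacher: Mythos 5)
Your proposal is correct and follows essentially the same route as the paper: both run the Theorem \ref{Iso} argument against the single target $2$ (which, being a continuous dcpo, is covered by the reflection's weaker universal property) to get $\sigma(Q)\cong\sigma(c(Q))$, and then derive a contradiction from the non-continuity of $Q$. The only difference is that you make explicit the final step the paper leaves as ``this is impossible,'' namely the characterization that a dcpo is continuous iff its Scott-open lattice is completely distributive.
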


\begin{proof}
	Notice that $(2,\leq)$ is a continuous domain. The Sierpinski space is $(2,\leq)$ with its Scott topology. Take any quasicontinuous dcpo $D$ which is not continuous. Assume that there exists a continuous dcpo $\tilde{D}$ with a Scott continuous map $i\colon D\rightarrow\tilde{D}$, such that it satisfies the universal property. By the similar argument in Theorem \ref{Iso}, we have $\sigma(D)\cong\sigma(\tilde{D})$. This is impossible.
\end{proof}

\begin{prop}
	Let $X$ be a $\mathbf{T_0}$ space such that it has a Scott-completion $(\tilde{X},i)$.
	Then $\eta\colon X\rightarrow \Sigma C(X)$ is continuous.
\end{prop}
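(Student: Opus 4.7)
The plan is to factor $\eta_X$ through the Scott completion so that it becomes the composition of three maps each of which is manifestly continuous.

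First, I would use that the Scott completion $(\tilde{X},i)$ is, by definition, a Scott space, so $\tilde{X}=\Sigma P$ for the dcpo $P$ obtained by equipping the specialization order of $\tilde{X}$ with its Scott topology. Consequently $C(\tilde{X})=\Gamma(P)$, the lattice of Scott-closed subsets of $P$. I would verify directly that the canonical map $\eta_{\tilde{X}}\colon \Sigma P\to \Sigma C(\tilde{X})$, $p\mapsto{\downarrow}p$, is Scott continuous. Monotonicity is clear, and for a directed set $D\subseteq P$ with supremum $p$, the join in $\Gamma(P)$ of $\{{\downarrow}d:d\in D\}$ is the Scott closure of $\bigcup_{d\in D}{\downarrow}d$; since ${\downarrow}p$ is Scott closed, contains every ${\downarrow}d$, and contains $p$ itself, this closure equals ${\downarrow}p=\eta_{\tilde{X}}(p)$. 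Hence $\eta_{\tilde{X}}$ is continuous.

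Second, by Theorem \ref{Iso} and Remark \ref{emb}, $i$ is a topological embedding and pullback $i^{-1}\colon \mathcal{O}(\tilde{X})\to\mathcal{O}(X)$ is an order isomorphism; taking complements yields an order isomorphism $i^{-1}\colon C(\tilde{X})\to C(X)$. Any order isomorphism of complete lattices is a homeomorphism for the respective Scott topologies, so $\Sigma C(\tilde{X})\cong \Sigma C(X)$ via $i^{-1}$.

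Third, I would check the commutative identity $\eta_X = i^{-1}\circ \eta_{\tilde{X}}\circ i$. This amounts to showing $\overline{\{x\}}_X = i^{-1}(\overline{\{i(x)\}}_{\tilde{X}})$ for every $x\in X$, which is immediate because $i$ is a topological embedding: $i^{-1}$ of a closed set is closed and contains $x$, giving $\supseteq$, while the reverse inclusion uses that $i$ preserves and reflects the specialization order. Combining Steps 1--3, $\eta_X$ is a composition of continuous maps, hence continuous.

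The main obstacle is the observation in Step 1: it is precisely the fact that $\tilde{X}$ is the Scott topology of a dcpo (and not merely a monotone convergence space) that makes the down-set map Scott continuous into $\Sigma C(\tilde{X})$; the rest of the argument is then the formal transfer along the isomorphism $C(X)\cong C(\tilde{X})$ induced by the embedding $i$.
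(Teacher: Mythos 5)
Your proposal is correct and follows essentially the same route as the paper: factor $\eta_X$ as $i^{-1}\circ\eta_{\tilde X}\circ i$, use that $\tilde X$ is a Scott space to get continuity of $p\mapsto{\downarrow}p$ into $\Sigma C(\tilde X)$, transfer along the order isomorphism $C(\tilde X)\cong C(X)$ from Theorem \ref{Iso}/Remark \ref{emb}, and verify the factorization via the fact that the embedding $i$ reflects the specialization order. The only cosmetic quibble is that the underlying poset of a Scott space need not be a dcpo, but your argument only uses preservation of \emph{existing} directed suprema, so nothing is affected.
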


\begin{proof}
	Notice that $j=\lambda x.{\downarrow}x\colon\tilde{X}\rightarrow\Sigma\Gamma(\tilde{X})$ is continuous since $\tilde{X}$ is a Scott space. $i^{-1}\colon\Gamma(\tilde{X})\rightarrow C(X)$ is an order isomorphism by Remark \ref{emb}. We claim that $$\eta=i^{-1}\circ j\circ i.$$
	For any $x\in X$, $i^{-1}\circ j\circ i(x)=i^{-1}({\downarrow}i(x))\supseteq{\downarrow}x=\eta(x)$.
	On other hand, for any $y\in i^{-1}({\downarrow}i(x))$, i.e., $i(y)\leq i(x)$. Since $i\colon X\rightarrow\tilde{X}$ is an embedding by Remark \ref{emb}, $i$ reflects order. It follows that $y\leq x$. Therefore $\eta$ is continuous.
\end{proof}

\begin{cor}
	Let $X$ be a $\mathbf{T_0}$ topological space such that its  D-completion is a Scott space. Then $\eta\colon X\rightarrow \Sigma C(X)$ is continuous.
\end{cor}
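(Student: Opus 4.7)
The plan is to deduce this corollary directly from the preceding proposition by showing that, under the stated hypothesis, the standard D-completion map $\eta_X\colon X\rightarrow X^d$ already realizes a Scott completion of $X$ in the sense of Zhang--Shi--Li. Once this identification is in hand, the conclusion is immediate.

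To carry out the first step, I would invoke Theorem \ref{D-completion}. The key observation is that every Scott space is automatically a monotone convergence space: its topology, being the Scott topology on some dcpo, makes the underlying dcpo a d-space. Thus, given any Scott space $Z$ and any continuous map $g\colon X\rightarrow Z$, one may view $Z$ simply as a monotone convergence space and apply the universal property of the D-completion to obtain a unique continuous extension $\tilde{g}\colon X^d\rightarrow Z$ with $\tilde{g}\circ \eta_X = g$. Because by hypothesis $X^d$ is itself a Scott space, the pair $(X^d,\eta_X)$ satisfies the defining universal property of a Scott completion of $X$.

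The second step is then a one-line application of the preceding proposition: every space admitting a Scott completion has continuous $\eta\colon X\rightarrow \Sigma C(X)$. Taking the Scott completion to be $(X^d,\eta_X)$ yields the conclusion.

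There is essentially no serious obstacle here; the content is a routine transfer via the universal property of the D-completion. The only point worth double-checking is that the universal property in the definition of Scott completion is indeed quantified over \emph{all} Scott spaces (not merely a restricted subclass such as those arising from continuous dcpos), which is precisely the form of the definition given above, so the argument goes through cleanly.
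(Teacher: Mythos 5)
Your argument is correct and is exactly the derivation the paper intends: the corollary is stated without proof right after the proposition on Scott completions, and the missing link is precisely that the universal property of the D-completion (Theorem \ref{D-completion}) turns $(X^d,\eta_X)$ into a Scott completion once $X^d$ is assumed to be a Scott space, after which the proposition applies. The one point to be aware of is that your step ``every Scott space is a monotone convergence space'' relies on reading ``Scott space'' in the definition of Scott completion as a \emph{dcpo} with its Scott topology (the reading the paper itself uses, e.g.\ in Corollary \ref{nScott}); under the weaker reading ``topology equals the Scott topology of the specialization order'' a Scott space such as $\Sigma\mathbb{N}$ is not a monotone convergence space and that step would fail.
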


\begin{exmp}{\rm
		There exists a topological space $X$ such that $\eta\colon X\rightarrow \Sigma C(X)$ is continuous while $X$ does not have a Scott completion. Let $\mathcal{J}$ be Johnstone's non-sober dcpo \cite[Exercise 5.2.15]{GAU}. Notice that ${\displaystyle Id(\prod^{n}\mathcal{J})=\prod^{n}Id(\mathcal{J})}$, 
		by using Corollary \ref{product} and Proposition \ref{ideal}
		we obtain that $\sigma(\Gamma (\mathcal{J}))=v(\Gamma (\mathcal{J}))$. Now let $X$ be the soberification of $\Sigma \mathcal{J}$. It implies that $\sigma(C (X))=v(C(X))$. Since $X$ is not a Scott space, $X$ does not have a Scott completion by Corollary \ref{nScott}. It is easy to verify that $\eta\colon X\rightarrow \Sigma C(X)$ is continuous.}
\end{exmp}

\section{Continuity of the supremum function $\Sigma C(X)\times\Sigma C(X)\rightarrow\Sigma C(X)$}

It is well known that the spectrum with hull-kernel topology of a completely distributive lattice (resp. a distributive hypercontinuous lattice) is exactly a continuous (resp. quasicontinuous) dcpo endowed with the Scott topology \cite{LAW81,Hoff81,Lawson1979}. There is a long-standing open problem asked by Lawson and Mislove that, for which distributive continuous lattice its spectrum is exactly a sober locally compact Scott space (see \cite[Problem 528]{Mill1990}).
 Chen, Kou and Lyu \cite{CK2022} found a necessary condition  for this problem, which is $\sigma(L^{op})=v(L^{op})$. Since $L^{op}$ can be viewed as a lattice of closed subsets of a topological space, we continue to investigate the equivalent conditions of $\sigma(C(X))=v(C(X))$ for general topological spaces $X$.
Surprisingly the continuity of the supremum function $\Sigma C(X)\times\Sigma C(X)\rightarrow\Sigma C(X)$ is one of these equivalent conditions under mild assumptions.

Given any poset $P$, $\upsilon(\Gamma (P)) = \sigma(\Gamma(P))$ is a necessary condition for $\Sigma P$ to be core compact \cite{CK2022}. 
Given a topological space $X$, we denote $\prod\limits^n X$ to be topological product of $n$ copies of $X$. For any $n\in \mathbb{N}$, define a map $s_n : \prod\limits^n X \to  \Sigma(C(X))$ as follows: $\forall (x_1,x_2,\dots,x_n) \in \prod\limits^n X$,
$$s_n(x_1,x_2,\dots,x_n) =\ \da\! {\{x_1,x_2,\dots,x_n\}}.$$

The following result is due to R.Hoffmann.
\begin{thm}{\rm\cite[Proposition 1.1]{Hoff79}}\label{ecs}
A space is an essentially complete $\mathbf{T_0}$ space iff the following are satisfied:
\begin{enumerate}[{\rm(i)}]
    \item  The associated pre-order (i.e., specialization order) $(X,\leq)$ is a complete lattice.
    \item The topology on $X$ is coarser than the Scott topology of $(X,\leq)$.
    \item Whenever $a=\sup\{b,c\}$ with $a,b,c\in X$, then $\{U\cap V | b\in U\in\mathcal{O}(X), c\in U\in\mathcal{O}(X)\}$ is an open neighborhood basis for $a$ in $X$.
\end{enumerate}
\end{thm}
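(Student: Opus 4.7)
The plan is to prove both implications by constructing (or forbidding) proper essential extensions. Recall that an essential embedding $j\colon X\to Y$ is one such that a continuous map $f\colon Y\to Z$ is an embedding if and only if $f\circ j$ is, and $X$ is essentially complete when every essential embedding out of $X$ is already a homeomorphism. Thus, for the ``only if'' direction, whenever one of (i)--(iii) fails I must exhibit a proper essential extension of $X$; for the ``if'' direction, under (i)--(iii) I must show that every essential embedding is automatically surjective.

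For necessity, I would assume $X$ is essentially complete. To establish (i), given $b,c\in X$ with no supremum, I form $Y=X\sqcup\{*\}$ and declare a subset open iff it is open in $X$ or has the form $(U\cap V)\cup\{*\}$ with $b\in U\in\mathcal{O}(X)$ and $c\in V\in\mathcal{O}(X)$. The inclusion $X\hookrightarrow Y$ is then a proper essential embedding, with essentiality verified most cleanly via Banaschewski's join-filter criterion; essential completeness therefore forces the missing join to exist, and iteration (with a transfinite version for arbitrary subsets, plus a separate step adjoining a bottom) yields the complete lattice structure of (i). For (ii), if some $U\in\mathcal{O}(X)$ is not Scott-open, there is a directed $D\subseteq X\setminus U$ with $\sup D=x\in U$; I would adjoin a new point sitting above $D$ but below $x$ to produce an essential extension, forcing (ii). For (iii), failure of the basis condition at some $a=\sup\{b,c\}$ yields an open $W\ni a$ containing no intersection $U\cap V$ of the prescribed form; adjoining a point $a'$ below $a$ whose neighborhood filter is precisely the one generated by these intersections $U\cap V$ gives a proper essential extension, forcing (iii).

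For sufficiency, I would assume (i)--(iii) and let $j\colon X\to Y$ be an essential embedding. For each $y\in Y$ I set $A_y=\{x\in X:j(x)\sqsubseteq y\}$ and $a^*=\sup A_y$, which exists by (i). The claim is $j(a^*)=y$, which forces $j$ to be surjective and hence a homeomorphism. The inequality $j(a^*)\sqsubseteq y$ I would argue from the fact that $y$ lies in the closure of $j(A_y)$: if not, some open neighborhood of $y$ would miss $j(A_y)$, and essentiality of $j$ would transport this, via the induced order isomorphism $\mathcal{O}(X)\cong\mathcal{O}(Y)$, to an open of $X$ containing $a^*$ but missing $A_y$, contradicting (ii) through its interplay with the directed supremum of finite joins drawn from $A_y$. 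The reverse direction $y\sqsubseteq j(a^*)$ is where (iii) is decisive: every open neighborhood of $j(a^*)$ in $Y$ corresponds, via the same isomorphism, to an open neighborhood of $a^*$ in $X$, and condition (iii) represents these as intersections of open neighborhoods of pairs whose sup is $a^*$; since the preimage filter of $y$ is cofinal with these pairs, $y$ lies in every such open set.

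The main obstacle will be the sufficiency direction, specifically translating essentiality into a concrete order isomorphism of the open-set lattices and then deploying (iii) to pin $y$ down precisely as $j(a^*)$. On the necessity side, the conceptually straightforward but fiddly task will be verifying essentiality of the three ad hoc extensions, most cleanly handled through Banaschewski's join-filter characterization rather than by direct manipulation of embeddings into arbitrary test spaces.
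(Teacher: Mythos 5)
The paper does not prove this statement at all: it is quoted verbatim from Hoffmann \cite[Proposition 1.1]{Hoff79} and used as a black box, so there is no in-paper argument to compare yours against. Judging the proposal on its own, the overall strategy (necessity by producing proper essential extensions, sufficiency by showing essential embeddings are surjective) is the right shape, but both halves have concrete gaps.

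The necessity direction breaks at its central move. Your one-point extension for (i) is not an essential extension. With $Y=X\sqcup\{*\}$ and opens generated by $\mathcal{O}(X)$ together with the sets $(U\cap V)\cup\{*\}$ ($b\in U$, $c\in V$), the map $f\colon Y\to X$ fixing $X$ and sending $*\mapsto b$ is continuous: for $W\in\mathcal{O}(X)$ with $b\in W$ one has $f^{-1}(W)=W\cup\{*\}=(W\cap X)\cup\{*\}$, which is open since $W\ni b$ and $X\ni c$. This $f$ restricts to the identity embedding on $X$ but is not injective, so $j$ is not essential and no contradiction with essential completeness is obtained. (Note also that in your $Y$ the point $*$ is not even above $b$ and $c$ in the specialization order, since plain $X$-opens containing $b$ remain open and omit $*$.) The same worry applies to the ad hoc extensions for (ii) and (iii). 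Invoking ``Banaschewski's join-filter criterion'' by name does not repair this: the criterion is precisely the content that is missing, and the correct route is to apply Banaschewski's theorem directly --- essential completeness is equivalent to every join filter being the neighbourhood filter of a (unique) point --- to the filter generated by $\{U\cap V : b\in U\in\mathcal{O}(X),\ c\in V\in\mathcal{O}(X)\}$, which then produces the point $a=\sup\{b,c\}$ and condition (iii) simultaneously, rather than building extensions whose essentiality is false or unverified.

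In the sufficiency direction there is a logical slip: you argue that $y\in\mathrm{cl}(j(A_y))$ and treat this as giving $j(a^*)\sqsubseteq y$, but these are different statements ($j(a^*)\sqsubseteq y$ means every open containing $j(a^*)$ contains $y$, whereas $y\in\mathrm{cl}(j(A_y))$ means every open containing $y$ meets $j(A_y)$), and the bridge between them is exactly where (i) and (ii) must be used; moreover the appeal to ``the directed supremum of finite joins drawn from $A_y$'' does not close the argument, since a Scott-open set containing $\bigvee F$ for a finite $F\subseteq A_y$ need not meet $A_y$ itself --- this is where (iii), iterated over finite joins, has to be brought in explicitly. You should also justify, rather than assume, that an essential embedding induces an isomorphism $\mathcal{O}(Y)\cong\mathcal{O}(X)$. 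As written, the proposal is a plausible roadmap but not yet a proof.
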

Hoffmann noticed that the above condition $(iii)$ can be replaced by that ``$\sup\colon X\times X\rightarrow X$ is continuous'' \cite[Lemma 1.5]{Hoff79}.


\begin{thm}\label{opop}
For a topological space $X$, the following conditions are equivalent:
\begin{enumerate}[{\rm(1)}]
\item $\sigma(C(X)) = \upsilon(C(X))$;
\item $s_n$ is continuous for all $n \in \mathbb{N}$;
\item $s_1$ is continuous and $\bigcup\colon\Sigma C(X)\times\Sigma C(X)\rightarrow\Sigma C(X)$ is continuous.
\end{enumerate}

\end{thm}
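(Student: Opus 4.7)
The plan is to establish the cycle (1)$\Rightarrow$(2)$\Rightarrow$(1) and then (2)$\Leftrightarrow$(3). Two facts will be used repeatedly: (a) the upper topology $\upsilon(C(X))$ has the subbase of \emph{diamond sets} $\Diamond U := \{A\in C(X) : A\cap U\neq\emptyset\} = C(X)\setminus{\downarrow}(X\setminus U)$ for $U\in\mathcal{O}(X)$; and (b) every $A\in C(X)$ is the directed supremum in $C(X)$ of the family $\{{\downarrow}F : F\subseteq A \text{ finite}\}$, since finite unions of finite subsets of $A$ remain finite and $\bigcup_F {\downarrow}F=A$ is already closed.

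For (1)$\Rightarrow$(2), assuming $\sigma(C(X))=\upsilon(C(X))$, continuity of $s_n$ into $\Sigma C(X)$ reduces to continuity into $(C(X),\upsilon(C(X)))$. Using that open sets of $X$ are upper sets for the specialization order, I compute $s_n^{-1}(\Diamond U)=\bigcup_{i=1}^n \pi_i^{-1}(U)$, which is open in $\prod^n X$. For (2)$\Rightarrow$(1), given a Scott open $\mathcal{U}\subseteq C(X)$ and $A\in\mathcal{U}$, the directed-sup representation together with Scott openness produces some finite $F=\{x_1,\ldots,x_n\}\subseteq A$ with ${\downarrow}F=s_n(x_1,\ldots,x_n)\in\mathcal{U}$; continuity of $s_n$ then supplies opens $x_i\in U_i$ with $U_1\times\cdots\times U_n\subseteq s_n^{-1}(\mathcal{U})$, and one verifies $A\in\bigcap_{i=1}^n\Diamond U_i\subseteq\mathcal{U}$: for each $B$ in the intersection, picking witnesses $y_i\in B\cap U_i$ yields $s_n(y_1,\ldots,y_n)\in\mathcal{U}$ lying in $B$, which forces $B\in\mathcal{U}$ by upper-closedness. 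Thus $\mathcal{U}\in\upsilon(C(X))$.

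For (3)$\Rightarrow$(2), I induct on $n$: writing $s_n=\bigcup\circ(s_{n-1}\times s_1)$ on the obvious grouping of coordinates expresses $s_n$ as a composite of continuous maps. The core work is (2)$\Rightarrow$(3): the case $n=1$ already gives $s_1$ continuous, and for joint continuity of $\bigcup$, given Scott open $\mathcal{U}$ and $A\cup B\in\mathcal{U}$, I select via the directed sup a finite $F\subseteq A\cup B$ with ${\downarrow}F\in\mathcal{U}$, and partition $F=\{x_1,\ldots,x_k\}\sqcup\{x_{k+1},\ldots,x_n\}$ with the first block lying in $A$ and the second in $B$. Continuity of $s_n$ then furnishes opens $U_i\ni x_i$ with $U_1\times\cdots\times U_n\subseteq s_n^{-1}(\mathcal{U})$, and $\mathcal{V}_A:=\bigcap_{i\leq k}\Diamond U_i$, $\mathcal{V}_B:=\bigcap_{i>k}\Diamond U_i$ are $\upsilon(C(X))\subseteq\sigma(C(X))$-open neighborhoods of $A$ and $B$ respectively. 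For any $A'\in\mathcal{V}_A$ and $B'\in\mathcal{V}_B$, witnesses $y_i\in A'\cap U_i$ for $i\leq k$ and $y_j\in B'\cap U_j$ for $j>k$ produce $s_n(y_1,\ldots,y_n)\subseteq A'\cup B'$ and $s_n(y_1,\ldots,y_n)\in\mathcal{U}$, so $A'\cup B'\in\mathcal{U}$ by upper-closedness.

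The one substantive step I expect is (2)$\Rightarrow$(3): one must leverage $n$-ary continuity of $s_n$ to fabricate \emph{independent} Scott-open neighborhoods of $A$ and $B$ by splitting the witness tuple according to which of the two sets each coordinate belongs to. The remaining directions reduce to routine computations with the diamond subbase and the directed-sup representation of closed sets.
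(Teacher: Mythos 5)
Your proof is correct, and the individual arguments are the same ones the paper uses (the diamond subbase $\Diamond U$, the directed family $\{{\downarrow}F : F\subseteq_f A\}$ with supremum $A$, and product neighborhoods extracted from continuity of $s_n$); what differs is the arrangement of the implications. The paper runs the single cycle $(1)\Rightarrow(3)\Rightarrow(2)\Rightarrow(1)$: the step $(1)\Rightarrow(3)$ is nearly free, because once $\sigma(C(X))=\upsilon(C(X))$ one only has to check preimages of subbasic diamonds under $s_1$ and $\bigcup$ (e.g.\ $\bigcup^{-1}(\Diamond U)=(\Diamond U\times C(X))\cup(C(X)\times\Diamond U)$), and then $(3)\Rightarrow(2)$ is your same factorization $s_n=\bigcup\circ(s_{n-1}\times s_1)$. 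You instead close $(1)\Leftrightarrow(2)$ first and then prove $(2)\Rightarrow(3)$ \emph{directly}, by splitting the finite witness $F\subseteq A\cup B$ into an $A$-block and a $B$-block and building $\mathcal{V}_A$, $\mathcal{V}_B$ from the corresponding coordinates. That argument is valid (the degenerate cases where a block is empty, or where $\emptyset\in\mathcal{U}$ forces $\mathcal{U}=C(X)$, are harmless and worth a sentence), but it duplicates work: since you already have $(2)\Rightarrow(1)$, the continuity of $\bigcup$ follows from $(1)$ by the cheap subbase computation, so your ``core work'' direction could be replaced by two lines. The net effect is the same theorem by essentially the same toolkit, with your version spending its effort on a step the paper gets for free from the ordering of the cycle.
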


\begin{proof} 
$(1)\implies (3)$. Notice that $$\Diamond U=\{A\in C(X):A\cap U\neq\emptyset\}, U\in\mathcal{O}(X)$$ form a subbasis of  the upper topology on $C(X)$. It is a direct argument.

$(3)\implies (2)$. From $$X\times X\stackrel{s_1\times s_1}{\longrightarrow}\Sigma C(X)\times\Sigma C(X)\stackrel{\bigcup}{\longrightarrow}\Sigma C(X),$$ we obtain that $s_2=\bigcup\circ(s_1\times s_1)$. It follows that $s_2$ is continuous. By induction on $n$, it is easy to see that $s_n$ is continuous.

$(2)\implies (1)$. For the completeness, we give the detailed proof here.
 Assume that $s_n$ is continuous for all $n \in \mathbb{N}$. Let $\mathcal{U}$ be an open subset of $\Sigma(C(X))$ and $A\in\mathcal{U}$. Without loss of generality, we assume $A\not=\emptyset$. Note that since $A=\bigcup\{\da\! F: F\subseteq_f\!A\}$ and $\{\da F: F\subseteq_fA\}$ is  a directed family in $C(X)$, there exists a non-empty finite subset $F$ of $ A$ such that $\da \! F\in \mathcal{U}$. Let $F=\{x_1,x_2,\ldots,x_n\}$, then $s_n(x_1,x_2,\ldots,x_n)=\da\! F\in \mathcal{U}$. It follows that  $(x_1,x_2,\ldots,x_n)\in s_n^{-1}(\mathcal{U})$. By the continuity of $s_n$, there exists a family of open subsets $U_k (1 \leq k \leq n)$
such that $U_1\times U_2\times\cdots\times U_n$ is open in $\prod\limits^n X$ and 
$$(x_1,x_2,\ldots,x_n)\in U_1\times U_2\times\cdots\times U_n\subseteq s_n^{-1}(\mathcal{U}).$$
Since $x_k\in A$ for $1\leq k\leq n$, we have $A\in \Diamond U_k=\{B\in C(X): \ B\cap U_k\not=\emptyset\}$. It follows that $A\in \bigcap\limits_{k=1}^n\Diamond U_k\in \upsilon(C(X))$.
For any $B\in \bigcap\limits_{k=1}^n\Diamond U_k$, there exists $y_k\in B\cap U_k$ for $1\leq k\leq n$. Since $(y_1,y_2,\dots,y_n)\in s_n^{-1}(\mathcal{U})$, we have $\bigcup\limits_{k=1}^n\da y_k\in \mathcal{U}$. It follows that $B\in \mathcal{U}$, i.e., $A\in\bigcap\limits_{k=1}^n\Diamond U_k\subseteq \mathcal{U}$.
\end{proof}

\begin{rem}{\rm
The equivalence of $(1)$ and $(2)$ has already been given in \cite[Proposition 4.1]{CK22}.}
\end{rem}

\begin{figure}
	\centering	
	\begin{tikzpicture}
		\draw (1,-1)--(-1,0)--(-1,5);
		\draw (1,-1)--(0,0)--(0,5);
		\draw (1,-1)--(1,0)--(1,5);
		\draw (1,-1)--(2,0)--(2,5);
		\draw[fill]  (1,0) circle (2pt);
        \draw[fill]  (1,1) circle (2pt);	\draw[fill]  (1,2) circle (2pt);	\draw[fill]  (1,3) circle (2pt);	\draw[fill]  (1,4) circle (2pt);	\draw[fill]  (1,5) circle (2pt);		\draw[fill]  (2,0) circle (2pt);
		\draw[fill]  (0,5) circle (2pt);
		\draw[fill] (0,0) circle (2pt); \draw[fill] (0,0) circle (2pt); \draw[fill] (-1,0) circle (2pt);	
		\draw[fill] (0,1) circle (2pt);\draw[fill] (2,1) circle (2pt);\draw[fill] (-1,1) circle (2pt); 
		\draw[fill] (0,2) circle (2pt);\draw[fill] (2,2) circle (2pt);\draw[fill] (-1,2) circle (2pt); 
		\draw[fill] (0,3) circle (2pt);\draw[fill] (2,3) circle (2pt);\draw[fill] (-1,3) circle (2pt); 
		\draw[fill] (0,4) circle (2pt);\draw[fill] (2,4) circle (2pt);\draw[fill] (-1,4) circle (2pt);
        \draw[fill] (-1,5) circle (2pt);
	    \draw[fill] (2,5) circle (2pt);
      \draw[thick,dotted] (1,-1)--(2.5,0);
        \draw[thick,dotted] (1,-1)--(2.8,0);
        \draw[thick,dotted] (2.2,-0.5)--(2.7,-0.5);
		\draw[thick,dotted] (-1,5)--(-1,6);\draw[thick,dotted] (2,5)--(2,6);
		\draw[thick,dotted] (0,5)--(0,6);
		\draw[thick,dotted] (1,5)--(1,6);
		\draw[thick,dotted] (2,0)--(3,0);
		\draw[thick,dotted] (2,1)--(3,1);
		\draw[thick,dotted] (2,2)--(3,2);
		\draw[thick,dotted] (2,3)--(3,3);
		\draw[thick,dotted] (2,4)--(3,4);
		\draw[thick,dotted] (2,5)--(3,5);
		\draw [fill] (1,7)node[anchor=south]{$\top$} circle (2pt);
		\draw [fill] (1,-1)node[anchor=north]{$\bot$} circle (2pt);
	\end{tikzpicture}
	\caption{P}  
\end{figure}

\begin{exmp}\label{countable chain}{\rm
There exists a space $X$ such that $\bigcup\colon\Sigma C(X)\times\Sigma C(X)\rightarrow\Sigma C(X)$ is continuous while $s_1\colon X\rightarrow\Sigma C(X)$ is not continuous. We consider the example $P$ with its upper topology from \cite[Example 4.9]{CK2022}, let $X=(P,v(P))$.
It is easy to see that $C(X)=\{{\downarrow}F:F\subseteq_{fin}P\}\cup\{P,\emptyset\}$. Next we verify that  $\bigcup\colon\Sigma C(X)\times\Sigma C(X)\rightarrow\Sigma C(X)$ is continuous. For any Scott open subset $\mathcal{U}$ of $C(X)$, 
$\bigcup^{-1}(\mathcal{U})=\{(A,B)\in C(X)\times C(X):A\cup B\in\mathcal{U}\}$. For any $A\cup B\in\mathcal{U}$, we construct two subsets of $C(X), \mathcal{V}=\mathcal{U}\cup\{A'\in C(X):A\subseteq A'\}$, $\mathcal{W}=\mathcal{U}\cup\{B'\in C(X):B\subseteq B'\}$. We claim that both $\mathcal{V}$ and $\mathcal{W}$ are Scott open subsets of $C(X)$. Obviously $\mathcal{V}$ is upward closed. For any directed family $\{F_i\}_{i\in I}$ consisting of closed subsets of $X$ such that $\sup^{\uparrow} F_i\in\mathcal{V}$, if $\sup^{\uparrow} F_i=P\in\mathcal{V}$, then there exists some $F_i\in\mathcal{U}\subseteq\mathcal{V}$ since $\mathcal{U}$ is Scott open. If $\sup^{\uparrow} F_i\neq P$, then there is a maximal element $F_{i_0}$ of $\{F_i:i\in I\}$. Hence $\mathcal{V}$ is Scott open. It can be deduced that $\mathcal{W}$ is Scott open as well. Notice that $(A,B)\in\mathcal{V}\times\mathcal{W}\subseteq\bigcup^{-1}(\mathcal{U})$. Therefore $\bigcup\colon\Sigma C(X)\times\Sigma C(X)\rightarrow\Sigma C(X)$ is continuous. Since $\sigma(C(X))\neq v(C(X))$ from \cite[Example 4.9]{CK2022}, we conclude that $s_1\colon X\rightarrow\Sigma C(X)$ is not continuous by Theorem \ref{opop}.}
\end{exmp}

\begin{cor}
Let $X$ be a monotone determined space. The following statements are equivalent:
\begin{enumerate}[{\rm(1)}]
	\item  $\sigma(C(X)) = \upsilon(C(X))$;
	\item  $s_n$ is continuous for all $n \in \mathbb{N}$;
	\item $\bigcup\colon\Sigma C(X)\times\Sigma C(X)\rightarrow\Sigma C(X)$ is continuous.
\end{enumerate}
\end{cor}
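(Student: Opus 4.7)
The plan is to reduce this corollary to Theorem \ref{opop} by showing that the auxiliary hypothesis ``$s_1$ is continuous'' in condition (3) of that theorem is automatic once $X$ is monotone determined. Two of the three implications then require nothing new: the equivalence $(1)\Leftrightarrow(2)$ is already supplied by Theorem \ref{opop}, and for $(2)\Rightarrow(3)$ one notes that (2) feeds into Theorem \ref{opop}, which in turn yields its own condition (3) -- in particular the continuity of $\bigcup$.

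The content of the corollary therefore lies entirely in $(3)\Rightarrow(1)$, and here the decisive step is the claim that $\eta=s_1\colon X\to\Sigma C(X)$ is continuous whenever $X$ is monotone determined. The cleanest route is to invoke machinery already developed in Section~3: by the theorem that the D-completion of a monotone determined space is a Scott space, together with the corollary that a $\mathbf{T_0}$ space whose D-completion is a Scott space has $\eta$ continuous, we conclude immediately that $s_1$ is continuous. Combined with the hypothesis that $\bigcup$ is continuous, this supplies the full condition (3) of Theorem \ref{opop}, and hence (1) follows.

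As a sanity check and alternative, I would also sketch a direct argument that bypasses the D-completion detour. By the equivalent characterization of monotone determined spaces, it suffices to prove that $s_1^{-1}(\mathcal{U})$ is directed-open for each Scott open $\mathcal{U}\subseteq C(X)$. Given $(D,x)\in DLim(X)$ with ${\downarrow}x\in\mathcal{U}$, the family $\{{\downarrow}d:d\in D\}$ is directed in $C(X)$ with supremum $\overline{\bigcup_{d\in D}{\downarrow}d}$; since every open neighbourhood of $x$ meets $D$ and is an upper set, $x$ lies in this closure, and so ${\downarrow}x\subseteq\bigvee_{d\in D}{\downarrow}d$. Upward closure of $\mathcal{U}$ places this supremum in $\mathcal{U}$, and Scott-openness then provides some $d\in D$ with ${\downarrow}d\in\mathcal{U}$, as required.

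I do not anticipate a genuine obstacle here: the only conceptual point is the automatic continuity of $s_1$ for monotone determined spaces, and both approaches above make this transparent. Example \ref{countable chain} is consistent with this, since the space $(P,\upsilon(P))$ appearing there is precisely not monotone determined -- a remark worth making explicitly after the proof to emphasize why the hypothesis on $X$ cannot be dropped.
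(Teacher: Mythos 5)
Your proposal is correct and follows the paper's own route exactly: the paper's entire proof is the one-line observation that $s_1$ is continuous because $X$ is monotone determined, after which Theorem \ref{opop} does all the work. Your direct verification that $s_1^{-1}(\mathcal{U})$ is directed-open (and the alternative via the D-completion results of Section~3) simply supplies the justification the paper leaves implicit, and both arguments are sound.
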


\begin{proof}
Notice that $s_1\colon X\rightarrow\Sigma C(X)$ is continuous since $X$ is a monotone determined space.
\end{proof}

\begin{rem}{\rm
Let $X$ be a monotone determined space. We consider a topology $\tau$ on $C(X)$ with $v(C(X))\subseteq\tau\subseteq\sigma(C(X))$. 
By a similar argument in Theorem \ref{opop}, we can show that $\tau$ is equal to $v(C(X))$ if the operator $\bigcup\colon(C(X),\tau)\times(C(X),\tau)\rightarrow(C(X),\tau)$ is continuous.  In other words, there exists only one topology $\tau$ on $C(X)$ 
such that $(1)$ the specialization order of $\tau$ is equal to the inclusion order and $(2)$
$(C(X),\tau)$ is an essentially complete $\mathbf{T_0}$ space. But this is false for a non-monotone determined space $X$, for instance see Example \ref{countable chain}. }

\end{rem}

\begin{cor}
Let $D$ be a poset. The following conditions are equivalent:
\begin{enumerate}[{\rm(1)}]
	\item  $\sigma(\Gamma(D)) = \upsilon(\Gamma(D))$;
	\item $s_n$ is continuous for all $n \in \mathbb{N}$;
	\item $\bigcup\colon\Sigma \Gamma(D)\times\Sigma \Gamma(D)\rightarrow\Sigma \Gamma(D)$ is continuous.
\end{enumerate}
\end{cor}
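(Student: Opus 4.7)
The plan is to reduce this corollary to the previous one by passing from the poset $D$ to the associated Scott space $\Sigma D$. First I would observe that $\Sigma D$, i.e.\ $D$ equipped with its Scott topology, is a monotone determined space (this is the first item of the earlier list of examples of monotone determined spaces). Hence the hypotheses of the preceding corollary are met with $X := \Sigma D$.

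Next I would identify the relevant data. By definition, the closed subsets of $\Sigma D$ are precisely the Scott-closed subsets of $D$, so $C(\Sigma D) = \Gamma(D)$ as complete lattices; consequently the Scott topology $\sigma(C(\Sigma D))$ and the upper topology $\upsilon(C(\Sigma D))$ coincide with $\sigma(\Gamma(D))$ and $\upsilon(\Gamma(D))$, respectively. Moreover, the maps $s_n\colon \prod^n \Sigma D \to \Sigma \Gamma(D)$ defined by $s_n(x_1,\dots,x_n) = {\downarrow}\{x_1,\dots,x_n\}$ agree with the maps $s_n$ appearing in the previous corollary when $X = \Sigma D$, and similarly the supremum operator $\bigcup\colon \Sigma \Gamma(D) \times \Sigma \Gamma(D) \to \Sigma \Gamma(D)$ is exactly the supremum operator for $C(\Sigma D)$.

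With these identifications in place, the three conditions $(1)$, $(2)$, $(3)$ of the corollary are literally the three conditions of the previous corollary applied to the monotone determined space $X = \Sigma D$. Their equivalence therefore follows immediately.

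There is no real obstacle here; the only thing worth being a little careful about is confirming that Scott-closed subsets of $D$ coincide with topologically closed subsets of $\Sigma D$ (which is true by definition of the Scott topology) and that the product topology on $\prod^n \Sigma D$ used to formulate the continuity of $s_n$ is the same in both settings, which is again immediate.
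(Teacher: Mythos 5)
Your proposal is correct and is exactly the argument the paper intends: the corollary is stated without proof as an immediate specialization of the preceding corollary to the monotone determined space $X=\Sigma D$ (using that every Scott space is monotone determined and that $C(\Sigma D)=\Gamma(D)$). Nothing further is needed.
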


The following result is a direct consequence of the above corollary. 
\begin{cor}{\rm\cite[Proposition 3.5]{CK2022}}\label{product}
Let $P$ be a poset. If ${\displaystyle\Sigma(\prod^{n} P)=\prod^{n}(\Sigma P)}$ for each $n\in\mathbb{N}$, then 
$\sigma(\Gamma(P))=v(\Gamma(P))$.
\end{cor}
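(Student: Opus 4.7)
The plan is to invoke the immediately preceding corollary, for which it suffices to show that $s_n\colon \prod^n \Sigma P \to \Sigma\Gamma(P)$ is continuous for every $n\in\mathbb{N}$. Using the hypothesis $\Sigma(\prod^n P)=\prod^n \Sigma P$, I would regard $s_n$ instead as a map $\Sigma(\prod^n P)\to\Sigma\Gamma(P)$ between two Scott spaces. Since continuity between Scott spaces is equivalent to Scott-continuity of the underlying poset map, the task reduces to checking that $s_n\colon\prod^n P\to\Gamma(P)$ is monotone and preserves directed suprema.

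Monotonicity is immediate: the componentwise order of tuples translates directly into the inclusion of the corresponding finite down-sets. For directed suprema, I would take any directed $D\subseteq\prod^n P$ with componentwise supremum $(d_1,\ldots,d_n)$, where $d_k=\sup \pi_k(D)$ since projections preserve directed suprema. Then I need to verify that $\downarrow\{d_1,\ldots,d_n\}$ agrees with $\sup_{(x_1,\ldots,x_n)\in D}\downarrow\{x_1,\ldots,x_n\}$ in $\Gamma(P)$, which is the Scott-closure of the union of these finite down-sets. The inclusion ``$\supseteq$'' is clear from $x_k\leq d_k$ together with the fact that $\downarrow\{d_1,\ldots,d_n\}$ is itself Scott-closed, so it already contains the union. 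For ``$\subseteq$'', the key observation is that $d_k$ lies in the Scott-closure of the directed set $\pi_k(D)$ because Scott-closed sets absorb existing directed suprema; downward-closure of the resulting closed set then forces each $\downarrow d_k$, and hence their union $\downarrow\{d_1,\ldots,d_n\}$, to lie inside.

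Once $s_n$ is established to be Scott-continuous, the hypothesis immediately upgrades it to a topologically continuous map $\prod^n \Sigma P\to\Sigma\Gamma(P)$, and the implication $(2)\Rightarrow(1)$ of the preceding corollary yields $\sigma(\Gamma(P))=\upsilon(\Gamma(P))$. I do not anticipate a substantive obstacle: the only non-trivial ingredient is the standard fact that the Scott-closure of a directed subset contains its supremum whenever that supremum exists, combined with the pointwise computation of suprema in a product poset. The mild conceptual point worth flagging is that the argument uses the hypothesis only to identify $\prod^n \Sigma P$ with $\Sigma(\prod^n P)$; the Scott-continuity verification of $s_n$ itself is purely order-theoretic and holds for any poset $P$.
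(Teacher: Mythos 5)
Your proposal is correct and follows exactly the route the paper intends: the paper states Corollary \ref{product} as a direct consequence of the preceding corollary (via its condition (2)), and your verification that $s_n\colon\prod^n P\to\Gamma(P)$ is monotone and preserves existing directed suprema, combined with the hypothesis $\Sigma(\prod^n P)=\prod^n(\Sigma P)$ to upgrade Scott-continuity to continuity on the topological product, supplies precisely the omitted details. No gaps.
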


\vskip 2mm

\begin{prop}
Let $X$ be a $T_1$ space. The following conditions are equivalent:
\begin{enumerate}[{\rm(1)}]
\item $X$ has the discrete topology;
\item $\sigma(C(X))=\upsilon(C(X))$;
\item $s_1\colon X\rightarrow\Sigma C(X)$ is continuous.
\end{enumerate}

\end{prop}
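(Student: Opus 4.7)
The plan is to prove the equivalence via the cycle $(1)\Rightarrow(2)\Rightarrow(3)\Rightarrow(1)$, where the first two links are almost free and the real content lies in $(3)\Rightarrow(1)$.

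For $(1)\Rightarrow(2)$: if $X$ is discrete, then every power $\prod^{n} X$ is discrete, and so every map $s_n$ is automatically continuous. Theorem \ref{opop} then delivers $\sigma(C(X))=\upsilon(C(X))$. Equivalently one can just observe that $C(X)=\mathcal{P}(X)$ is algebraic with compact elements the finite subsets, and each basic Scott open $\uparrow F=\bigcap_{x\in F}\uparrow\{x\}$ is a finite intersection of subbasic upper-open sets $\mathcal{P}(X)\setminus\!\downarrow(X\setminus\{x\})$.

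The implication $(2)\Rightarrow(3)$ is immediate from Theorem \ref{opop} (take $n=1$).

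The interesting step is $(3)\Rightarrow(1)$. Using the $T_1$ hypothesis twice, each singleton $\{x\}$ is closed (hence lives in $C(X)$) and $s_1(x)={\downarrow}x=\{x\}$; moreover, distinct singletons are pairwise incomparable in $C(X)$. Fix an arbitrary $x_0\in X$; the idea is to exhibit a Scott-open set of $C(X)$ whose $s_1$-preimage is exactly $\{x_0\}$. Consider
\[
\mathcal{F}\;:=\;\{\emptyset\}\cup\{\{y\}:y\in X,\ y\neq x_0\}.
\]
It is clearly a lower set of $C(X)$. For directed-sup closure, note that any directed subfamily of $\mathcal{F}$ can contain at most one nonempty element: two distinct singletons in $\mathcal{F}$ would have to share an upper bound in $\mathcal{F}$, but by incomparability every such upper bound would have at least two points and therefore lie outside $\mathcal{F}$. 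So $\mathcal{F}$ is Scott closed, and $\mathcal{U}:=C(X)\setminus\mathcal{F}$ is Scott open. By construction $s_1^{-1}(\mathcal{U})=\{x\in X:\{x\}\notin\mathcal{F}\}=\{x_0\}$. Continuity of $s_1$ forces $\{x_0\}$ to be open, and since $x_0$ was arbitrary, $X$ is discrete.

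The only place any care is needed is the directed-sup closure of $\mathcal{F}$, and this is precisely where the $T_1$ assumption on $X$ (ensuring incomparability of singletons in $C(X)$) is essential; everything else in the proof is bookkeeping.
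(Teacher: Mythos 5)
Your proof is correct and follows essentially the same route as the paper: the cycle $(1)\Rightarrow(2)\Rightarrow(3)\Rightarrow(1)$ with the real work in the last step, where the paper uses exactly the same observation that a family of singletons together with $\emptyset$ is Scott closed in $C(X)$ (the paper takes $\{\{a\}:a\in A\}\cup\{\emptyset\}$ for an arbitrary $A\subseteq X$ and concludes $s_1^{-1}$ of it, namely $A$, is closed; your choice $A=X\setminus\{x_0\}$ is just the special case). The only cosmetic difference is in $(1)\Rightarrow(2)$, where the paper invokes complete distributivity of the powerset lattice rather than Theorem~\ref{opop}, but both are immediate.
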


\begin{proof}
$(1)\implies (2)$. Since $C(X)$ is a completely distributive lattice, $\sigma(C(X))=\upsilon(C(X))$.

$(2)\implies(3)$. Straightforward.

$(3)\implies (1)$. Notice that $\mathcal{A}=\{\{a\}:a\in A\}\cup\{\emptyset\}$ is always a Scott closed set in $C(X)$ for any $A\subseteq X$. Take the inverse image of $\mathcal{A}$, we obtain that $s_1^{-1}(\mathcal{A})=A$ is closed in $X$ since $\eta$ is continuous. Hence $X$ has the discrete topology.
\end{proof}

\begin{rem}{\rm
$(2)$ implies $(1)$  can be found in \cite[Proposition 3.15]{CK2022}.The above results do not hold for $\mathbf{T_0}$ spaces. For example we consider $X=\Sigma L$ where $L$ is the Isbell's lattice, then $(3)$ holds while $(2)$ does not hold since $\Sigma C(X)$ is not sober \cite{MLZ2021}.}
\end{rem}

\vskip 2mm

 A natural question arises that whether a topological space $X$ that makes the map $s_1\colon X\rightarrow\Sigma(C(X))$  continuous is a monotone determined space? When $L$ is a monotone convergence space such that $(L,\leq)$ is a complete lattice, the answer is positive (see \cite[Proposition 4.11]{CK22}). But this is not true for general topological spaces. First we recall a useful result about the product topology of Scott spaces.

\vskip 3mm

\begin{prop}{\rm\cite[Lemma 4.1]{MLXZ23}}\label{ideal}
Let $P,Q$ be two posets. If $|\text{Id}(P)|$, $|\text{Id}(Q)|$ are both countable, then 
$\Sigma(P\times Q)=\Sigma P\times\Sigma Q$.
($\text{Id}(P)$ means the set of all non-principal ideals of $P$.)
\end{prop}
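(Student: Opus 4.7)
Proof plan for Proposition~\ref{ideal}:

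The inclusion $\Sigma P\times\Sigma Q\subseteq\Sigma(P\times Q)$ of topologies is the easy direction: any basic box $U\times V$ with $U\in\sigma(P)$ and $V\in\sigma(Q)$ is Scott-open in the product poset, since it is an upper set and any directed $E\subseteq P\times Q$ whose supremum lies in $U\times V$ must meet $U\times V$ -- the projections $\pi_P(E),\pi_Q(E)$ are directed with suprema in $U$ and $V$ respectively, and any common upper bound in $E$ of two witnesses for membership in each projection lies in $U\times V$.

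For the reverse inclusion, I would fix $W\in\sigma(P\times Q)$ and $(p_0,q_0)\in W$, and seek Scott-opens $U\ni p_0$, $V\ni q_0$ with $U\times V\subseteq W$. Set $B:=(P\times Q)\setminus W$, which is Scott-closed and hence a lower set closed under existing directed suprema. If the Scott closure of the first projection $\pi_P(B)$ in $P$ omits $p_0$, then $U=P\setminus\overline{\pi_P(B)}$ is a Scott-open containing $p_0$ with $U\times Q\subseteq W$, and symmetrically if the Scott closure of $\pi_Q(B)$ omits $q_0$. So I may assume both closures contain the respective coordinates and argue by contradiction: suppose no product Scott-open box around $(p_0,q_0)$ is contained in $W$.

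This is where the countability hypothesis enters. Enumerate the countable subfamilies of $\text{Id}(P),\text{Id}(Q)$ consisting of those non-principal ideals whose suprema exist and dominate $p_0$, $q_0$ respectively, as $\{I_n\}$ and $\{J_n\}$; should either enumeration be empty, the corresponding set $\uparrow p_0$ or $\uparrow q_0$ is already Scott-open and a suitable product box is immediate. Otherwise carry out an $\omega$-step diagonal construction producing a directed set $D\subseteq B$ whose coordinatewise suprema satisfy $\sup\pi_P(D)\geq p_0$ and $\sup\pi_Q(D)\geq q_0$: at stage $n$, use the contradiction hypothesis to pick a witness $(a_n,b_n)\in B$ lying in a refined product Scott-neighborhood of $(p_0,q_0)$ that simultaneously meets $I_n$ in the first coordinate and $J_n$ in the second, exploiting the directedness of each ideal to keep the growing family compatible for directedness in $B$. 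Since $W$ is an upper set, $\sup D\in W$, while $D\subseteq B=W^c$; this contradicts the Scott-openness of $W$.

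The principal difficulty, and precisely the place where the countability of $\text{Id}(P)$ and $\text{Id}(Q)$ is essential, is orchestrating the simultaneous ascent in both coordinates while remaining inside $B$: the contradiction hypothesis supplies witnesses in any prescribed product Scott-neighborhood, but assembling them into a directed set of $B$ with the desired coordinate suprema requires a diagonal enumeration of the ideal data. Without countability, a transfinite recursion would be required with no clear stabilization; with countable enumerations of $\{I_n\}$ and $\{J_n\}$, an $\omega$-stage construction suffices, and the directedness of each individual ideal provides the local compatibility needed at each step.
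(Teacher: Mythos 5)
The paper does not actually prove this proposition; it is quoted from \cite[Lemma 4.1]{MLXZ23}, so I am comparing your plan against the standard argument for that lemma. Your easy direction ($\Sigma P\times\Sigma Q\subseteq\Sigma(P\times Q)$) is fine. The hard direction, however, has a genuine gap at its central step. You propose to assume no open box around $(p_0,q_0)$ fits inside $W$ and then to assemble a directed set $D\subseteq B=(P\times Q)\setminus W$ with $\sup\pi_P(D)\geq p_0$ and $\sup\pi_Q(D)\geq q_0$. Two things block this. First, directedness: your witnesses $(a_n,b_n)$ are drawn from \emph{different} ideals $I_n$, $J_n$, and the directedness of each individual ideal gives no upper bounds, let alone upper bounds lying in $B$, for witnesses coming from distinct ideals; $B$ is merely a lower set closed under existing directed suprema and provides no mechanism for finding common upper bounds inside it. Second, even if $D$ were directed, $a_n\in I_n$ with $\sup I_n\geq p_0$ only says $a_n\leq\sup I_n$; it gives no lower bound on $a_n$, so there is no reason that $\sup_n a_n\geq p_0$. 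The contradiction you aim for therefore never materializes, and no amount of diagonal bookkeeping over the enumerations repairs these two points.

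The countability hypothesis is used in the opposite way: one builds the open box \emph{directly} rather than refuting its nonexistence. Recall that an upper set $U$ is Scott-open iff every non-principal ideal $I$ with $\sup I\in U$ meets $U$ (principal ideals are automatic). Enumerate the non-principal ideals of $P$ and of $Q$ so that each is revisited infinitely often, and construct finite sets $F_0=\{p_0\}\subseteq F_1\subseteq\cdots$ and $G_0=\{q_0\}\subseteq G_1\subseteq\cdots$ maintaining the invariant ${\uparrow}F_n\times{\uparrow}G_n\subseteq W$. At a stage devoted to a non-principal ideal $I$ of $P$ with $\sup I\in{\uparrow}F_n$: for each of the finitely many $q\in G_n$ one has $(\sup I,q)\in W$ since $W$ is an upper set, and $I\times\{q\}$ is directed with this supremum, so some $x_q\in I$ satisfies $(x_q,q)\in W$; a common upper bound $x\in I$ of the finitely many $x_q$ then satisfies $\{x\}\times{\uparrow}G_n\subseteq W$, and one sets $F_{n+1}=F_n\cup\{x\}$ (symmetrically for ideals of $Q$). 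Then $U=\bigcup_n{\uparrow}F_n$ and $V=\bigcup_n{\uparrow}G_n$ satisfy $U\times V\subseteq W$, and the bookkeeping guarantees that every non-principal ideal whose supremum lies in $U$ (resp.\ $V$) eventually contributes a point to some $F_n$ (resp.\ $G_n$), so $U$ and $V$ are Scott-open. Note how countability and the finiteness of the generating sets $F_n,G_n$ interact: the single-ideal step needs only finitely many simultaneous constraints, which is exactly what your scheme cannot arrange.
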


\begin{exmp}{\rm
There exists a non-monotone determined space $X$ such that $\eta\colon X\rightarrow\Sigma C(X)$ is continuous and $\sigma(C(X))=v(C(X))$.  We consider the space $X$ in Example \ref{Non-directed}, it follows that $C(X)\cong \Gamma (D)$ since $\Sigma D$ is the D-completion of $X$.
Notice that ${\displaystyle Id(\prod^{n}D)=\prod^{n}Id(D)}$, 
by using Corollary \ref{product} and Proposition \ref{ideal}
we obtain that $\sigma(\Gamma (D))=v(\Gamma (D))$. Hence we also have $\sigma(C(X))=v(C(X))$. It is easy to see that $\eta\colon X\rightarrow\Sigma(C(X))$ is continuous.}

\end{exmp}

Next, we consider the lattice of Scott closed subsets of complete lattices.
The following lemma is an equivalent description of \cite[Proposition 4.10]{CK2022}.
\begin{lem}\label{joinc}
Let $L,M$ be complete lattices and $S=L\times M$. If $~\bigvee\colon\Sigma S\times\Sigma S\rightarrow\Sigma S$ is continuous, then $\Sigma (L\times M)=\Sigma L\times\Sigma M$.
\end{lem}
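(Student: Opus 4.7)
The inclusion $\sigma(L)\times\sigma(M)\subseteq\sigma(L\times M)$ always holds, since every basic open $V\times W$ with $V\in\sigma(L)$ and $W\in\sigma(M)$ is easily seen to be upper and inaccessible by directed suprema in the product (projections of directed sets are directed, and suprema in the product are computed coordinatewise). So the content is to establish the reverse inclusion $\sigma(L\times M)\subseteq\sigma(L)\times\sigma(M)$, and the plan is to exploit the joint Scott continuity of $\bigvee$ together with the existence of bottom elements in the complete lattices $L$ and $M$.

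The key observation is the decomposition $(a,b)=(a,\bot_M)\vee(\bot_L,b)$ for any $(a,b)\in S$. Given a Scott-open set $U\in\sigma(S)$ and a point $(a,b)\in U$, the assumed joint continuity of $\bigvee\colon\Sigma S\times\Sigma S\to\Sigma S$ tells us that $\bigvee^{-1}(U)$ is open in the product $\Sigma S\times\Sigma S$. Hence there exist $U_1,U_2\in\sigma(S)$ with $(a,\bot_M)\in U_1$, $(\bot_L,b)\in U_2$, and $s_1\vee s_2\in U$ for every $(s_1,s_2)\in U_1\times U_2$.

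Next, I would extract the required rectangular neighbourhood by defining
\[
V=\{x\in L:(x,\bot_M)\in U_1\},\qquad W=\{y\in M:(\bot_L,y)\in U_2\}.
\]
Since the maps $x\mapsto(x,\bot_M)\colon L\to S$ and $y\mapsto(\bot_L,y)\colon M\to S$ are Scott continuous, the sets $V$ and $W$ are Scott open in $L$ and $M$ respectively, and by construction $a\in V$, $b\in W$. For any $(x,y)\in V\times W$ we have $(x,\bot_M)\in U_1$ and $(\bot_L,y)\in U_2$, so their join $(x,y)=(x,\bot_M)\vee(\bot_L,y)$ lies in $U$. Thus $(a,b)\in V\times W\subseteq U$, which shows that $U$ is open in $\Sigma L\times\Sigma M$ and completes the argument.

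There is no serious obstacle here once one spots the trick of splitting $(a,b)$ via the bottom elements; the only thing to be careful about is that both $L$ and $M$ really are complete lattices (so $\bot_L$ and $\bot_M$ exist) and that joint continuity of $\bigvee$ is used in its full strength of joint openness of the preimage, not just separate continuity in each variable.
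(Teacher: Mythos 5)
Your proof is correct. Note that the paper itself gives no argument for this lemma: it only remarks that the statement is ``an equivalent description of [CK2022, Proposition 4.10]'', so your write-up supplies details that the paper leaves to a citation. The route you take --- splitting $(a,b)=(a,\bot_M)\vee(\bot_L,b)$, pulling back a Scott-open $U\subseteq S$ along the jointly continuous $\bigvee$ to get a basic box $U_1\times U_2$ in $\Sigma S\times\Sigma S$, and then restricting along the Scott-continuous sections $x\mapsto(x,\bot_M)$ and $y\mapsto(\bot_L,y)$ to produce $V\times W$ --- is exactly the standard argument underlying the cited result, and every step checks out: the easy inclusion $\sigma(L)\times\sigma(M)\subseteq\sigma(L\times M)$ is indeed automatic, the sections are Scott continuous because suprema in a product are computed coordinatewise, and the join of $(x,\bot_M)$ and $(\bot_L,y)$ really is $(x,y)$. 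You are also right to flag that the completeness of $L$ and $M$ is used only to guarantee the bottom elements, and that joint (not merely separate) continuity of $\bigvee$ is what makes $\bigvee^{-1}(U)$ open in the product topology; this is precisely where the hypothesis enters.
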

By the above lemma, if $\Sigma L\times\Sigma M$ is not a Scott space, then $~\bigvee\colon\Sigma S\times\Sigma S\rightarrow\Sigma S$ is not continuous.
We recall that a topological space $X$ is called a {\it{Scott space}} if its topology is equal to the Scott topology on $(X,\leq)$, where $\leq$ is the specialization order of $X$. 

\begin{prop}{\rm\cite[Proposition II-3.15(iii)]{CON03}}\label{Scor}
If $X$ is a continuous retract of a monotone convergence space $Y$, then $X$ must be $\mathbf{T_0}$ and is a monotone convergence space. In the special case that $Y$ is a dcpo equipped with the Scott topology, then $X$ must also have the Scott topology.  
\end{prop}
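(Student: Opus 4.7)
The plan is to exploit the continuous retraction $r\colon Y\to X$ together with its continuous section $s\colon X\to Y$ (so $r\circ s=\mathrm{id}_X$), pushing structural properties from $Y$ to $X$ through $s$ and pulling topological information back through $r$.

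I would first verify that $X$ is $\mathbf{T_0}$: for distinct $x,y\in X$ we have $s(x)\neq s(y)$ since $r\circ s=\mathrm{id}_X$, so a $\mathbf{T_0}$-separating open $U$ for $s(x),s(y)$ in $Y$ pulls back to the $\mathbf{T_0}$-separating open $s^{-1}(U)$ in $X$. To show $(X,\sqsubseteq)$ is a dcpo, take a directed $D\subseteq X$. Then $s(D)$ is directed in $Y$ (continuity implies monotonicity with respect to specialization orders), so $\bigvee s(D)$ exists in $Y$ and $s(D)\to\bigvee s(D)$ in $Y$ because $Y$ is a monotone convergence space. Setting $a:=r(\bigvee s(D))$, monotonicity of $r$ makes $a$ an upper bound of $D=r(s(D))$, while monotonicity of $s$ turns any other upper bound $x$ of $D$ into an upper bound $s(x)$ of $s(D)$, whence $a\leq r(s(x))=x$; thus $\bigvee D=a$. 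Continuity of $r$ transports the convergence in $Y$ to $D=r(s(D))\to\bigvee D$ in $X$. This last convergence is exactly what is needed to show $\mathcal{O}(X)\subseteq\sigma(X,\sqsubseteq)$: every open $U\subseteq X$ is an upper set, and $\bigvee D\in U$ forces $D\cap U\neq\emptyset$ by convergence.

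For the special case $Y=\Sigma Y$, I would show that every Scott open $V\subseteq X$ is open by using the identity $V=s^{-1}(r^{-1}(V))$ and arguing that $r^{-1}(V)$ is Scott open in $Y$, hence open in $\Sigma Y$, so that $V$ is open in $X$ by continuity of $s$. The crucial lemma is that $r$ preserves directed suprema, i.e.\ $r(\bigvee D)=\bigvee r(D)$ for every directed $D\subseteq Y$. The inequality $\bigvee r(D)\leq r(\bigvee D)$ is immediate from monotonicity. For the reverse, suppose $r(\bigvee D)\not\leq\bigvee r(D)$; then in the $\mathbf{T_0}$ space $X$ the principal lower set $\da\bigvee r(D)=\overline{\{\bigvee r(D)\}}$ is closed, so its complement $W$ is an open neighbourhood of $r(\bigvee D)$. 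Hence $r^{-1}(W)$ is Scott open in $\Sigma Y$ and contains $\bigvee D$, so some $d\in D$ lies in $r^{-1}(W)$, contradicting $r(d)\leq\bigvee r(D)$. Granted this, $r^{-1}(V)$ is upper and whenever $\bigvee D\in r^{-1}(V)$ we have $\bigvee r(D)=r(\bigvee D)\in V$, and Scott openness of $V$ yields $r(d)\in V$ for some $d\in D$, i.e.\ $D\cap r^{-1}(V)\neq\emptyset$; so $r^{-1}(V)$ is Scott open, as required.

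The main obstacle is the sup-preservation step, and its success rests on the almost tautological observation that in any $\mathbf{T_0}$ space the principal lower set $\da y=\overline{\{y\}}$ is automatically closed. This gives us enough genuine open sets of the coarser topology on $X$ to run a separation argument against $\bigvee r(D)$, even though we do not a priori know whether $X$ carries the Scott topology.
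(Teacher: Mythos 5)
Your proof is correct: the retraction argument for the dcpo structure, the observation that $\mathcal{O}(X)\subseteq\sigma(X,\sqsubseteq)$ via net convergence, and the separation argument against the closed set $\overline{\{\bigvee r(D)\}}$ to show $r$ preserves directed suprema are all sound, and together they yield $\mathcal{O}(X)=\sigma(X,\sqsubseteq)$ in the special case. The paper gives no proof of its own here (it cites \cite[Proposition II-3.15(iii)]{CON03}), and your argument is essentially the standard one from that reference, so there is nothing further to compare.
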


\begin{thm}
Let $L$ be a complete lattice. Consider the following conditions
\begin{enumerate}[{\rm(1)}]
\item $\bigcup\colon\Sigma\Gamma(L)\times\Sigma\Gamma(L)\rightarrow\Sigma\Gamma(L)$ is continuous;
\item $\bigvee\colon\Sigma L\times\Sigma L\rightarrow\Sigma L$ is continuous.
\end{enumerate}
Then $(1)\implies (2)$, but the converse direction does not hold generally.

\end{thm}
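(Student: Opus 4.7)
The plan for $(1)\Rightarrow(2)$ is to factor the join operator through $\Gamma(L)$ as
\[ \bigvee \;=\; \mathrm{Sup}\;\circ\;\bigcup\;\circ\;(\eta\times\eta), \]
where $\eta(x)={\downarrow}x$ and $\mathrm{Sup}(A)=\bigvee A$, and then check each constituent is continuous. Since $\Sigma L$ is a Scott space (hence monotone determined), $\eta\colon\Sigma L\to\Sigma\Gamma(L)$ is continuous: for every Scott-open $\mathcal U\subseteq\Gamma(L)$ the preimage $\eta^{-1}(\mathcal U)=\{x\in L:{\downarrow}x\in\mathcal U\}$ is upward closed, and for any directed family $\{x_i\}\subseteq L$ with join $x$ the identity ${\downarrow}x=\mathrm{cl}(\bigcup_i{\downarrow}x_i)=\sup_{\Gamma(L)}{\downarrow}x_i$ forces some $x_i\in\eta^{-1}(\mathcal U)$ whenever $x\in\eta^{-1}(\mathcal U)$. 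The map $\mathrm{Sup}\colon\Sigma\Gamma(L)\to\Sigma L$ is clearly monotone and preserves directed suprema, since $\bigvee\mathrm{cl}(\bigcup_i A_i)=\bigvee\bigcup_i A_i=\bigvee_i\bigvee A_i$ for any directed $\{A_i\}\subseteq\Gamma(L)$. Combined with the identity $\bigvee({\downarrow}a\cup{\downarrow}b)=a\vee b$, this yields the displayed factorization; composing three continuous maps (with $\bigcup$ continuous by hypothesis $(1)$) delivers $(2)$.

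For the converse, I plan to exhibit a complete lattice $L$ satisfying $(2)$ but not $(1)$. Because $\bigvee\colon L\times L\to L$ is automatically Scott continuous as a map $\Sigma(L\times L)\to\Sigma L$, Lemma \ref{joinc} reformulates $(2)$ as the product-compatibility $\Sigma(L\times L)=\Sigma L\times\Sigma L$. Theorem \ref{opop}, together with the fact that $s_1=\eta$ is continuous on any monotone determined space, reformulates the failure of $(1)$ as $\sigma(\Gamma(L))\neq\upsilon(\Gamma(L))$. So I seek a complete lattice $L$ whose Scott topology is product-compatible at level two while $\Gamma(L)$ fails to be hypercontinuous.

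The step I expect to be the main obstacle is producing such an $L$ explicitly. Corollary \ref{product} rules out product-compatibility of the Scott topology holding at every level $n$ simultaneously in any counterexample, so the discrepancy $\Sigma(L^n)\neq(\Sigma L)^n$ will have to occur at some $n\geq 3$. A natural candidate family consists of complete lattices with countable ideal lattice $\mathrm{Id}(L)$: Proposition \ref{ideal} then automatically supplies product-compatibility at level two, reducing the task to arranging that $\Sigma L$ fails local hypercompactness. Adapting the chain-like poset of Example \ref{countable chain} by adjoining the missing infinite suprema to obtain a complete lattice, or passing through $\Gamma$ of a non-sober dcpo such as Johnstone's, are promising starting points; verifying that both halves of the counterexample really hold is the delicate step.
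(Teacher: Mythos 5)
Your argument for $(1)\implies(2)$ is correct and is exactly the paper's: the factorization $\bigvee=\sup\circ\bigcup\circ(\eta\times\eta)$, together with the Scott continuity of $\eta=\lambda x.{\downarrow}x\colon L\to\Gamma(L)$ and of $A\mapsto\bigvee A$, is precisely how the paper proves this implication.

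The counterexample half, however, has a genuine gap, and the search strategy rests on two incorrect reformulations. First, condition $(2)$ is \emph{not} equivalent to $\Sigma(L\times L)=\Sigma L\times\Sigma L$. Lemma \ref{joinc} takes as hypothesis the continuity of $\bigvee$ on $\Sigma S\times\Sigma S$ where $S$ is \emph{already a product} $L\times M$; to conclude $\Sigma(L\times L)=\Sigma L\times\Sigma L$ you would need $\bigvee\colon\Sigma(L\times L)\times\Sigma(L\times L)\to\Sigma(L\times L)$ continuous, which is not condition $(2)$. Indeed, the lattice the paper actually uses for the converse, Hertling's $Z=(X+\sigma(X))\cup\{\top,\bot\}$, has $\bigvee\colon\Sigma Z\times\Sigma Z\to\Sigma Z$ continuous while $\Sigma Z\times\Sigma Z\neq\Sigma(Z\times Z)$, so the equivalence you assert fails outright. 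Second, the negation of $(1)$ is $\sigma(\Gamma(L))\neq\upsilon(\Gamma(L))$ (by the corollary to Theorem \ref{opop} for posets), but this is not the same as $\Sigma L$ failing local hypercompactness: local hypercompactness of $\Sigma L$ amounts to hypercontinuity of $\sigma(L)$, i.e.\ quasicontinuity of $\Gamma(L)$, a different condition, and Johnstone's dcpo is not locally hypercompact yet satisfies $\sigma(\Gamma(\mathcal{J}))=\upsilon(\Gamma(\mathcal{J}))$, as the paper itself computes. Worse, your candidate family of complete lattices with countable $\mathrm{Id}(L)$ pushes in the wrong direction: the countability of non-principal ideals propagates to finite powers, so Proposition \ref{ideal} and Corollary \ref{product} yield $\sigma(\Gamma(L))=\upsilon(\Gamma(L))$, i.e.\ they force $(1)$ to \emph{hold}. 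The paper's actual argument assumes $(1)$ for Hertling's $Z$, identifies $\Gamma(Z)$ with $\Gamma(X)\times\Gamma(Y)$ plus endpoints, applies Lemma \ref{joinc} to the subspace $\Sigma(\Gamma(X)\times\Gamma(Y))$ to get $\Sigma(\Gamma(X)\times\Gamma(Y))=\Sigma\Gamma(X)\times\Sigma\Gamma(Y)$, and then retracts (Proposition \ref{Scor}) onto $\Sigma X\times\Sigma Y$ to contradict $\Sigma X\times\Sigma Y\neq\Sigma(X\times Y)$; some such nontrivial input seems unavoidable, and your proposal does not supply it.
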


\begin{proof}
$(1)\implies(2)$.\\
Notice that both $\eta=\lambda x.{\downarrow}x\colon L\rightarrow\Gamma(L)$ and $\theta=\lambda A.\sup A\colon\Gamma(L)\rightarrow L$ are Scott continuous. Clearly
$\theta\circ\eta=1_{L}$. Hence $\Sigma L$ is a continuous retraction of $\Sigma\Gamma(L)$. It is direct to verify that 
$\bigvee=\sup\circ\bigcup\circ(\eta\times\eta)\colon\Sigma L\times\Sigma L\rightarrow\Sigma L$. So $\bigvee\colon\Sigma L\times\Sigma L\rightarrow\Sigma L$ is continuous.

We use Hertling's example \cite[Theorem 7.4]{Hert22} to show that there exists a complete lattice $Z$ such that condition $(2)$ holds but condition $(1)$ fails. $Z$ is the disjoint union of a complete lattice $X$ and $Y=\sigma(X)$ plus a top element and a bottom element, i.e., $Z=\{X+Y\}\bigcup\{\top,\bot\}$. $Z$ has the following conditions:
\begin{itemize}
\item $\Sigma X\times\Sigma Y\neq\Sigma(X\times Y)$, $\Sigma Z\times\Sigma Z\neq\Sigma(Z\times Z)$.
\item $\bigvee\colon\Sigma Z\times\Sigma Z\rightarrow\Sigma Z$ is continuous.
\end{itemize}
Assume that $\bigcup\colon\Sigma\Gamma(Z)\times\Sigma\Gamma(Z)\rightarrow\Sigma\Gamma(Z)$ is continuous.
 It is easy to see that 
$\Gamma(Z)\cong\Gamma(X)\times\Gamma(Y)\bigcup\{\top,\bot\}$, the isomorphic map $\theta\colon\Gamma(Z)\rightarrow\Gamma(X)\times\Gamma(Y)\bigcup\{\top,\bot\}$ is defined as follows: $\theta(Z)=\top, \theta(\emptyset)=\bot, \theta(A)=(A\cap X,A\cap Y)$ if $A$ is a proper nonempty Scott closed subset of $Z$.

Clearly $\Sigma(\Gamma(X)\times\Gamma(Y))$ is a subspace of $\Sigma\Gamma(Z)$. Hence $\bigvee\colon\Sigma(\Gamma(X)\times\Gamma(Y))\times\Sigma(\Gamma(X)\times\Gamma(Y))\rightarrow\Sigma(\Gamma(X)\times\Gamma(Y))$ is continuous. By Lemma \ref{joinc}, $\Sigma(\Gamma(X)\times\Gamma(Y))=\Sigma\Gamma(X)\times\Sigma\Gamma(Y)$. Notice that $\Sigma X\times\Sigma Y$ is a retraction of $\Sigma\Gamma(X)\times\Sigma\Gamma(Y)$ which is a Scott space. It follows that $\Sigma X\times\Sigma Y$ is also a Scott space by Proposition \ref{Scor}, i.e., $\Sigma X\times\Sigma Y=\Sigma (X\times Y)$. A contradiction. Consequently we also have 
$\sigma(\Gamma(Z))\neq v(\Gamma(Z))$.

\end{proof}

\section{Quasicontinuity of the lattice of closed subsets}

 We introduce the concepts of quasicontinuous spaces and quasialgebraic spaces. Then we will summarize the correspondences among the continuity of a $\mathbf{T_0}$ space, the continuity of the lattice of its open subsets, and the continuity of the lattice of its closed subsets.

Given any set $X$, we use $F \subseteq_{f} X$ to denote that $F$ is a finite subset of $X$.\ Given any two subsets $G,H \subseteq X$,\ we define $G \leq H$ iff $\ua H \subseteq \ua G$.\ A family of finite sets is said to be directed if given $F_{1}, F_{2}$ in the family,\ there exists an $F$ in the family such that $F_{1},F_{2} \leq F$.\
 Let $X$ be a $\mathbf{T_0}$ topological space and $\mathcal{M}$ be a directed family of finite subsets of $X$.\ Then $F_{\mathcal{M}} = \{A \subseteq X : \exists M \in \mathcal{M}, \ua M \subseteq A\}$ is a filter under the inclusion order.\ We say that $\mathcal{M}$ converges to $x$ if $F_{\mathcal{M}}$ converges to $x$,\ i.e.,\ for any open neighbourhood $U$ of $x$,\ there exists some $A \in \mathcal{M}$ such that $A \subseteq U$.\

\begin{defn}
Let $X$ be a $\mathbf{T_0}$ space and $G,H \subseteq X$.\ We say that $G$ $n$-$approximates$ $H$, denoted by $G \ll_{n} H$, if for every net $(x_{j})_{J}$ of $X$, $(x_{j})_{J} \to y$ for some $y\in H$ implies $(x_{j})_{J} \cap \ua G \neq \emptyset$. We write $G \ll_{n} x$ for $G \ll_{n} \{ x \}$. $G $ is said to be $n$-compact if $G$ is a finite set and $G \ll_{n} G $.
\end{defn}

\begin{lem} \label{n-quasicontinuity}
For any $\mathbf{T_0}$ space $X$, the following are equivalent: for all $x,y\in Y$,
\begin{enumerate}[{\rm(1)}]
\item $G\ll_n H$;
\item  $H \subseteq (\ua G)^{\circ}$, where  $(\ua G)^{\circ}$ means the interior of $\ua G$.
\end{enumerate}
\end{lem}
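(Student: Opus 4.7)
The plan is to prove the two implications separately, using only the definition of net convergence together with the fact that the interior of a set is its largest open subset.

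For $(2) \Rightarrow (1)$, I fix any net $(x_j)_J \to y$ with $y \in H \subseteq (\uparrow G)^{\circ}$. Since $(\uparrow G)^{\circ}$ is an open neighbourhood of $y$, net convergence forces $(x_j)_J$ to be eventually in $(\uparrow G)^{\circ}$, hence in $\uparrow G$; in particular the range of the net meets $\uparrow G$. This direction amounts to unpacking definitions and needs no further work.

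For the substantive direction $(1) \Rightarrow (2)$, I would argue by contraposition. Suppose some $y \in H$ fails to lie in $(\uparrow G)^{\circ}$. The key observation is that no open neighbourhood of $y$ can be contained in $\uparrow G$: any such neighbourhood $U$ would satisfy $U \subseteq (\uparrow G)^{\circ}$ and thus force $y \in (\uparrow G)^{\circ}$. Consequently, for each open $U \ni y$ I can pick a witness $x_U \in U \setminus \uparrow G$. Indexing these choices by the open neighbourhood system at $y$, directed by reverse inclusion, produces a net $(x_U)$ that converges to $y$ by the standard neighbourhood-filter argument while avoiding $\uparrow G$ entirely, contradicting $G \ll_n H$.

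I do not anticipate a genuine obstacle here: both directions reduce to the basic interplay between net convergence and the interior operator, and the argument is robust under any reasonable reading of ``$(x_j)_J \cap \uparrow G \neq \emptyset$'' (range meets $\uparrow G$, frequently in $\uparrow G$, or eventually in $\uparrow G$). The only step that requires mild care is the construction of the witnessing neighbourhood net in the reverse direction; this is a standard device, but it must be set up explicitly before the contradiction with $G \ll_n H$ becomes visible.
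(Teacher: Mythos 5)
Your argument is correct and coincides with the paper's own proof: the forward direction is the same unpacking of net convergence, and for $(1)\Rightarrow(2)$ the paper likewise builds the net $(y_U)_{U}$ indexed by the open neighbourhoods of $y$ ordered by reverse inclusion, with each $y_U\in U\setminus\mathord{\ua} G$, and derives the same contradiction with $G\ll_n H$. No further comment is needed.
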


{\noindent\bf Proof.} $(2)\Rightarrow (1)$ is obvious. We only show $(1)\Rightarrow (2)$. Assume that $y\in H, y \not\in (\ua G)^{\circ}$. Then, for any open set $U\subseteq X$ with $y\in U$, there exists a $y_U\in U$ such that $y_U\not\in \ua G$. Set $J=\{U\in \mathcal{O}(X): y\in U\}$ ordered as: $\forall U,V\in J$,$U\leq V\Leftrightarrow V\subseteq U.$
Then $J$ is directed.  Easily, one can see that  $(y_U)_{U\in J}$ is a net converging to $y$. Since $G \ll_n y$, there exists $U_0\in J$ such that $x \sqsubseteq y_{U_0}$ for some $x \in G$ by definition. It is a contradiction for $y_{U_0}\not\in \ua G$.  Hence, $y \in (\ua G)^{\circ}$. \ $\Box$

\vskip 3mm
 Since the $n$-approximation does not depend on monotone determined space, a notion of $n$-quasicontinuity for a $\mathbf{T_0}$ space can be defined as follows.

\begin{defn}
A $\mathbf{T_0}$ topological space $X$ is called \textit{$n$-quasicontinuous} if for each $x\in X$, there is a directed family $D(x) \subseteq fin_{n}(x) = \{F: F\ \text{is finite}, F \ll_n x\}$ such that $D(x) \rightarrow x$.
\end{defn}

\begin{defn}
A $\mathbf{T_0}$ topological space $X$ is called \textit{locally hypercompact} (or \textit{finitary compact}), if for any open subsets $U$ and $x \in U$, there exists some $F \subseteq_{f}  X$ such that $x \in (\ua F)^{\circ} \subseteq \ua F \subseteq U$. $X$ is called \textit{hypercompactly based} if for any open subsets $U$ and $x \in U$, there exists some $F \subseteq_{f}  X$ such that $x \in (\ua F)^{\circ} = \ua F \subseteq U$.
\end{defn}

Since $G \ll_n H$ iff $H \subseteq (\ua G)^\circ$, definitions of $n$-quasicontinuous and locally hypercompact are equivalent. 
Similarly, we can define  $n$-quasialgebraic spaces. And the following statements hold. 

\begin{thm}  {\rm\cite{FK2017,LAW85}} \label{n-quasicontinuous space}
For a $\mathbf{T_0}$ topological space $X$, the following are equivalent:
\begin{enumerate}[{\rm(1)}]
\item $X$ is  $n$-quasicontinuous;
\item $X$ is a locally hypercompact space;
\item $\mathcal{O}(X)$ is hypercontinuous.

\end{enumerate}
\end{thm}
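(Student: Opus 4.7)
The plan is to establish the ring of equivalences $(1)\Leftrightarrow(2)\Leftrightarrow(3)$, with the first being an almost immediate reformulation via Lemma \ref{n-quasicontinuity} and the second being the substantive step. For $(1)\Leftrightarrow (2)$, I would use Lemma \ref{n-quasicontinuity} to identify $\mathrm{fin}_n(x) = \{F\subseteq_f X : x\in (\uparrow F)^\circ\}$. For $(1)\Rightarrow(2)$: given $x\in U$ open, the directed family $D(x)\subseteq \mathrm{fin}_n(x)$ with $D(x)\to x$ supplies some $F\in D(x)$ with $F\subseteq U$, and since $F\ll_n x$ we obtain $x\in (\uparrow F)^\circ \subseteq \uparrow F \subseteq U$. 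Conversely $(2)\Rightarrow(1)$: the whole family $\mathrm{fin}_n(x)$ converges to $x$ (local hypercompactness applied to each open $U\ni x$ produces a witnessing $F\subseteq U$), and it is directed because, given $F_1,F_2\in \mathrm{fin}_n(x)$, applying local hypercompactness to the open neighborhood $(\uparrow F_1)^\circ \cap (\uparrow F_2)^\circ$ of $x$ yields a finite $F_3$ with $x\in (\uparrow F_3)^\circ\subseteq \uparrow F_3\subseteq \uparrow F_1\cap \uparrow F_2$, hence $F_1,F_2\leq F_3$.

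For $(2)\Leftrightarrow(3)$, my first task is to prove the auxiliary characterization: in $\mathcal{O}(X)$, $W\prec U$ if and only if there exists $F\subseteq_f U$ with $W\subseteq \uparrow F$. The $(\Leftarrow)$ direction uses the subbasic upper-topology opens $V_y := X\setminus \overline{\{y\}}$: the equality $\bigcap_{y\in F}(\mathcal{O}(X)\setminus \downarrow V_y) = \{U'\in \mathcal{O}(X): F\subseteq U'\}$ supplies an upper-topology neighborhood of $U$ (since $F\subseteq U$) contained in $\uparrow W$ (since $F\subseteq U'$ open forces $\uparrow F\subseteq U'$, hence $W\subseteq U'$). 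For the $(\Rightarrow)$ direction, take witnesses $V_1,\ldots,V_n$ with $U\in \bigcap_i(\mathcal{O}(X)\setminus \downarrow V_i)\subseteq \uparrow W$, pick $x_i\in U\setminus V_i$, and set $F=\{x_1,\ldots,x_n\}\subseteq U$; to verify $W\subseteq \uparrow F$, suppose $y\in W\setminus \uparrow F$ and, using $\mathbf{T_0}$, for each $i$ choose an open $V_i'\ni x_i$ with $y\notin V_i'$. Then $V'=\bigcup_i V_i'$ is open, contains $F$ but not $y$, and satisfies $V'\not\subseteq V_i$ for each $i$ (since $x_i\in V'\setminus V_i$), hence $W\subseteq V'$, contradicting $y\in W$.

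With this characterization, the final equivalence is routine. For $(2)\Rightarrow(3)$, each $x\in U$ yields via local hypercompactness some $F\subseteq_f X$ with $x\in (\uparrow F)^\circ \subseteq \uparrow F\subseteq U$, so $(\uparrow F)^\circ\prec U$ contains $x$; this gives $U=\bigvee\{W:W\prec U\}$ in $\mathcal{O}(X)$, and directedness of the set of such $W$ follows from closure under finite unions (the union of witnessing finite sets witnesses the union of the $W$'s). For $(3)\Rightarrow(2)$, hypercontinuity of $\mathcal{O}(X)$ supplies $W\prec U$ with $x\in W$; the characterization produces $F\subseteq_f U$ with $W\subseteq \uparrow F$, and since $W$ is open we conclude $x\in W\subseteq (\uparrow F)^\circ\subseteq \uparrow F\subseteq U$. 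The main obstacle is the $(\Rightarrow)$ direction of the auxiliary characterization of $\prec$ in $\mathcal{O}(X)$, where extracting a single finite set $F\subseteq U$ from finitely many subbasic upper-topology neighborhoods requires the $\mathbf{T_0}$-separation argument sketched above; everything else is bookkeeping once that technical lemma is in place.
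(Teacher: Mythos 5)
Your proof is correct. Note that the paper does not actually prove this theorem---it is quoted from \cite{FK2017,LAW85}, with the equivalence $(1)\Leftrightarrow(2)$ dispatched in the sentence immediately preceding the statement (``Since $G \ll_{n} H$ iff $H \subseteq (\ua G)^{\circ}$, definitions of $n$-quasicontinuous and locally hypercompact are equivalent''), which is exactly your use of Lemma \ref{n-quasicontinuity}. Your argument therefore follows the route the paper implicitly intends, and it usefully fills in the details the paper delegates to references: in particular, your ``auxiliary characterization'' of $W \prec U$ in $\mathcal{O}(X)$ is precisely Lemma \ref{hyperbelow}, which the paper also states without proof (citing \cite{CK2022}); your $\mathbf{T_0}$-separation argument for its forward direction, and the observation that the relevant sets $\{W : W \prec U\}$ are closed under finite unions (hence directed), are the only nontrivial points, and both are handled correctly.
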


\vskip 2mm

\begin{defn}
A $\mathbf{T_0}$ topological space is called \textit{quasicontinuous} if one of the four equivalent conditions in Theorem \ref{n-quasicontinuous space} holds; called \textit{quasialgebraic} if it is $n$-quasialgebraic.
\end{defn}

Next, we show that for a monotone determined space $X$, $\mathcal{O}(X)$ is algebraic iff it is hyperalgebraic.

\begin{lem}{\rm\cite[Lemma 2.3]{CK2022}} \label{hyperbelow}
Let $X$ be a topological space and $U,V \in \mathcal{O}(X)$. $U \prec V$ iff $U \subseteq \ua F \subseteq V$ for some $F \subseteq_{f} X$.
\end{lem}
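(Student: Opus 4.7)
The plan is to unpack the definition $U \prec V$ as $V \in \mathrm{int}_{\upsilon(\mathcal{O}(X))} {\ua}_{\subseteq} U$, where ${\ua}_{\subseteq} U=\{W\in\mathcal{O}(X):U\subseteq W\}$, and then use the explicit description of the subbasic opens of the upper topology on $\mathcal{O}(X)$, namely $\{W\in\mathcal{O}(X):W\not\subseteq W_0\}$ for $W_0\in\mathcal{O}(X)$.

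For the easy direction $(\Leftarrow)$, suppose $F=\{x_1,\dots,x_n\}\subseteq_f X$ satisfies $U\subseteq{\ua}F\subseteq V$. I would observe that for each $x_i$, the set $\mathcal{N}_i=\{W\in\mathcal{O}(X):x_i\in W\}$ coincides with $\{W\in\mathcal{O}(X):W\not\subseteq X\setminus\overline{\{x_i\}}\}$, since any open $W$ meeting $\overline{\{x_i\}}$ already contains $x_i$ by being an upper set in the specialization order. Hence each $\mathcal{N}_i$ is subbasic open in $\upsilon(\mathcal{O}(X))$. Then $\mathcal{N}=\bigcap_{i=1}^n\mathcal{N}_i=\{W\in\mathcal{O}(X):F\subseteq W\}$ is upper-open, contains $V$ (because $F\subseteq{\ua}F\subseteq V$), and is contained in ${\ua}_{\subseteq}U$: any $W\in\mathcal{N}$ is upper in the specialization order so $F\subseteq W$ forces ${\ua}F\subseteq W$, hence $U\subseteq W$. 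This shows $V\in\mathrm{int}_{\upsilon}{\ua}_{\subseteq}U$, i.e.\ $U\prec V$.

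For the nontrivial direction $(\Rightarrow)$, unfolding the definition I get a basic open $\upsilon$-neighborhood of $V$ inside ${\ua}_{\subseteq}U$; it has the form $\bigcap_{i=1}^n\{W\in\mathcal{O}(X):W\not\subseteq W_i\}$ for some $W_1,\dots,W_n\in\mathcal{O}(X)$. Since $V\not\subseteq W_i$, pick $x_i\in V\setminus W_i$ and set $F=\{x_1,\dots,x_n\}$; then $F\subseteq V$, and $V$ being an upper set gives ${\ua}F\subseteq V$. The key step is to prove $U\subseteq{\ua}F$. The trick I expect to use is: for any $u\in U$, consider the open set $W_u=X\setminus\overline{\{u\}}$. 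Since $u\in U\setminus W_u$, we have $U\not\subseteq W_u$, so $W_u\notin{\ua}_{\subseteq}U$ and therefore $W_u$ fails the basic-neighborhood condition at some index, i.e.\ $W_u\subseteq W_i$ for some $i$. Hence $x_i\notin W_u$, so $x_i\in\overline{\{u\}}$, meaning $x_i\sqsubseteq u$ and $u\in{\ua}F$. Combining, $U\subseteq{\ua}F\subseteq V$ as required.

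The main obstacle is precisely this last step: translating membership in a basic upper-open set (a statement about inclusion of opens) into a statement about the specialization order on the \emph{points} of $X$. The test-open $W_u=X\setminus\overline{\{u\}}$ is the natural choice because $W_u$ is the largest open set missing $u$, so the containment $W_u\subseteq W_i$ forced by the hypothesis immediately pins down $x_i$ in the closure of $u$. Once this bridge between the lattice-theoretic way-below relation on $\mathcal{O}(X)$ and the specialization-order structure on $X$ is identified, both directions are routine. I would also note that no separation beyond $\mathbf{T_0}$ is needed, and that the argument is essentially the standard proof that $\mathcal{O}(X)$ is hypercontinuous iff $X$ is locally hypercompact, which fits the context of Theorem~\ref{n-quasicontinuous space}.
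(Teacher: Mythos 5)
Your proof is correct, and since the paper cites this lemma from \cite[Lemma 2.3]{CK2022} without reproducing a proof, there is nothing to diverge from: your argument is the standard one, unwinding $U\prec V$ via the subbasic upper-opens $\{W:W\not\subseteq W_0\}$ and using the test-open $X\setminus\overline{\{u\}}$ to pass from inclusions of opens to the specialization order. Both directions check out, including the degenerate case of the empty basic neighborhood (which forces $U=\emptyset$ and is handled by $F=\emptyset$).
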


\begin{thm}{\rm\cite[Proposition 7]{ERNE2009}  \label{com=hypercom}}
Let $X$ be a monotone determined space. $\mathcal{O}(X)$ is algebraic iff it is hyperalgebraic.
\end{thm}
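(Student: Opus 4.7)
The plan is to prove the two implications separately; the easy direction (hyperalgebraic $\Rightarrow$ algebraic) is straightforward, while the substantive content lies in the converse.

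For hyperalgebraic $\Rightarrow$ algebraic: by Lemma \ref{hyperbelow} every hypercompact open has the form $\uparrow F$ with $F$ finite and $\uparrow F\in\mathcal{O}(X)$, and such a set is compact in $\mathcal{O}(X)$---a directed open cover must, by directedness and finiteness of $F$, contain a single member covering $F$ and hence all of $\uparrow F$. The hypercompact opens below a fixed open $V$ form a directed family since $\uparrow F_1\cup\uparrow F_2=\uparrow(F_1\cup F_2)$, and by hypothesis their sup is $V$. Since these are already compact opens, $V$ is also a directed sup of compact opens, yielding algebraicity.

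For algebraic $\Rightarrow$ hyperalgebraic, the plan reduces to the key claim that under both hypotheses every compact open $U\in\mathcal{O}(X)$ is hypercompact. Granting this, the generating families of compact and hypercompact opens below each $V$ coincide, so algebraicity upgrades directly to hyperalgebraicity. The key claim will be proved in two sub-steps: first, deduce that $\mathcal{O}(X)$ is hypercontinuous; second, use hypercontinuity to pass from $U\ll U$ to $U\prec U$.

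For the first sub-step, by Theorem \ref{n-quasicontinuous space} the hypercontinuity of $\mathcal{O}(X)$ is equivalent to $X$ being locally hypercompact, so I need to show that for each $x$ in an open $U$ there is a finite $F$ with $x\in(\uparrow F)^\circ\subseteq\uparrow F\subseteq U$. Algebraicity first supplies a compact open $K$ with $x\in K\subseteq U$; one then uses monotone determinedness of $X$---via the directed-convergence characterization of openness (Theorem \ref{convergence})---to extract a finite $F\subseteq K$ with $x\in(\uparrow F)^\circ\subseteq\uparrow F\subseteq K$. For the second sub-step, in a hypercontinuous lattice the relations $\ll$ and $\prec$ coincide: if $x\ll y$, write $y$ as a directed sup of elements $z\prec y$, pick $z_0\prec y$ with $x\leq z_0$, and observe $y\in\mathrm{int}_{\upsilon}\uparrow z_0\subseteq\mathrm{int}_{\upsilon}\uparrow x$, so $x\prec y$. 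Applied to $U$, this yields $U\ll U\Rightarrow U\prec U$, whence $U=\uparrow F$ by Lemma \ref{hyperbelow}.

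The main obstacle is the refinement step of the first sub-step: given a compact open $K$ containing $x$ in a monotone determined space, producing a finite $F\subseteq K$ with $x\in(\uparrow F)^\circ$. This is the only place where both hypotheses are used in genuine combination---algebraicity supplies the local compact neighborhood, and monotone determinedness is what converts the internal directed-convergence structure of $K$ into the required hypercompact-neighborhood basis at $x$.
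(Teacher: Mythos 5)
The paper gives no proof of this theorem at all --- it is imported verbatim from Ern\'e's paper --- so there is nothing in the source to compare against; your proposal has to stand on its own, and it does not quite do so. The direction ``hyperalgebraic $\Rightarrow$ algebraic'' is fine (via Lemma \ref{hyperbelow}, a hypercompact open is exactly a set $\ua F$ with $F$ finite, and such a set is a compact element of $\mathcal{O}(X)$; monotone determinedness is not needed here). Your reduction of the converse to the claim that every compact open set is hypercompact is also the right move, since in an algebraic-and-hyperalgebraic lattice every compact element is a directed sup of, hence equal to, a hypercompact element.

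The genuine gap is the step you yourself flag as ``the main obstacle'': given $x$ in a compact open $K$ of a monotone determined space, produce a finite $F\subseteq K$ with $x\in(\ua F)^{\circ}\subseteq\ua F\subseteq K$. You assert that this follows from ``the directed-convergence characterization of openness (Theorem \ref{convergence})'', but no argument is given, and none is routine. Negating the conclusion, one gets that for every finite $F\subseteq K$ and every open $U\ni x$ there is a point of $U\setminus\ua F$; the resulting net is indexed by pairs $(U,F)$ and is \emph{not} a monotone net, so monotone determinedness says nothing about it directly. Likewise one cannot simply cover $K$ by sets of the form $(\ua F)^{\circ}$ or $X\setminus\overline{K\setminus\ua F}$ and invoke compactness, because showing these cover $K$ is exactly the statement to be proved. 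Note also that the analogous implication with $\ll$ in place of compactness is false (core-compact monotone determined spaces need not be locally hypercompact), so any correct argument must exploit the reflexive relation $K\ll K$ in an essential way; your outline never indicates how. Finally, a structural remark: if the extraction step were available, your second sub-step (hypercontinuity and $\ll\,=\,\prec$) would be superfluous --- covering the compact set $K$ by the finitely many interiors $(\ua F_{x_i})^{\circ}$ already yields $K=\ua\bigl(F_{x_1}\cup\dots\cup F_{x_n}\bigr)$ directly. As it stands, the entire content of the hard direction is concentrated in an unproved assertion.
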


\begin{rem} \rm
Lemma \ref{com=hypercom} does not always hold for general $\mathbf{T_0}$ spaces. A simple example is $\mathbb{N}$, the set of natural numbers,  endowed with the cofinite topology. Any open subset of it is compact, but not hypercompact.
\end{rem}

By the above discussion, we conclude the relationships between the continuity of a $\mathbf{T_0}$ space and the lattice of its open subsets as follows.

\begin{thm}\label{open lattice}
Let $X$ be a $\mathbf{T_0}$ space. 
\begin{enumerate}[{\rm(1)}]
\item $X$ is continuous (algebraic) iff $\mathcal{O}(X)$ is a completely distributive lattice (completely distributive algebraic lattice).
\item $X$ is quasicontinuous iff the lattice $\mathcal{O}(X)$ is a hypercontinuous lattice.
\item $X$ is quasialgebraic iff $\mathcal{O}(X)$ is a hyperalgebraic lattice. Moreover, if $X$ is a monotone determined space, then $X$ is quasialgebraic iff $\mathcal{O}(X)$ is an algebraic lattice.

\end{enumerate}
\end{thm}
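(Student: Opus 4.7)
The plan is to derive each item by combining the results already proved or cited in the excerpt with standard translations between the open-set lattice and the underlying space.

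For item (1), the equivalence between $X$ being a continuous (c-space) $\mathbf{T_0}$ space and $\mathcal{O}(X)$ being completely distributive is classical, due essentially to Lawson and Hoffmann, and I would cite it from \cite{ERNE2009} or \cite[Section VII-2]{CON03}. The algebraic refinement is obtained by the same argument, replacing the role of the way-below relation with the stronger requirement that each point has a neighborhood basis of hypercompact opens; in a completely distributive lattice these coincide with the compact elements, so $\mathcal{O}(X)$ is a completely distributive algebraic lattice exactly when $X$ is algebraic.

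For item (2), the equivalence is precisely (1)$\Leftrightarrow$(3) of Theorem \ref{n-quasicontinuous space}, so nothing new is required.

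For item (3), I would mirror the proof pattern of Theorem \ref{n-quasicontinuous space}. The core observation is that an element $V \in \mathcal{O}(X)$ is hypercompact iff $V \prec V$, which by Lemma \ref{hyperbelow} holds iff $V = \ua F \subseteq \ua G \subseteq V$ for some finite $G$; taking $G = F$, this reduces to $V = \ua F$ with $\ua F$ open. On the other hand, by Lemma \ref{n-quasicontinuity}, $F \ll_{n} F$ is equivalent to $F \subseteq (\ua F)^{\circ}$, i.e., $\ua F$ is open. Consequently, the condition that $X$ be $n$-quasialgebraic (for each $x$, a directed family $D(x)$ of $n$-compact finite sets with $D(x) \to x$) translates under $F \mapsto \ua F$ into the statement that every open $U$ is a directed union of hypercompact opens $\ua F \subseteq U$, which is precisely hyperalgebraicity of $\mathcal{O}(X)$. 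The ``moreover'' part then follows by chaining this equivalence with Theorem \ref{com=hypercom}: for a monotone determined $X$, algebraicity and hyperalgebraicity of $\mathcal{O}(X)$ coincide, so $X$ is quasialgebraic iff $\mathcal{O}(X)$ is algebraic.

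The main obstacle lies in the careful bookkeeping in (3): one must verify that the directed family $D(x)$ of $n$-compact sets with $D(x) \to x$ corresponds, under $F \mapsto \ua F$, to a family of hypercompact opens that is directed under inclusion and whose directed supremum absorbs every open neighborhood of $x$; conversely, a witnessing family of hypercompact opens below $U \ni x$ yields such a $D(x)$. This translation is routine given Lemmas \ref{n-quasicontinuity} and \ref{hyperbelow}, but must be spelled out so that both the directedness and the convergence conditions match on either side.
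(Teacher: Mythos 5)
Your proposal is correct and follows essentially the same route as the paper, which states this theorem without a written proof as a summary of ``the above discussion'': item (1) is the classical Lawson--Hoffmann correspondence, item (2) is exactly Theorem \ref{n-quasicontinuous space} (1)$\Leftrightarrow$(3), and item (3) is the hyperalgebraic analogue obtained from Lemmas \ref{n-quasicontinuity} and \ref{hyperbelow} (hypercompact opens are precisely the open sets of the form ${\uparrow}F$ with $F$ finite) together with Theorem \ref{com=hypercom} for the ``moreover'' clause. Your unpacking of the $F\mapsto{\uparrow}F$ translation in (3) is exactly the bookkeeping the paper leaves implicit.
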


Given any $\mathbf{T_0}$ space $X$, denote $C(X)$ the lattice of its closed subsets. We will study the relations between $X$ and  $C(X)$. We first recall some useful conditions for continuity and distributiveness of lattices.

\begin{lem}{\rm \cite[Theorem I-3.16]{CON03}} \label{completely distributive}
Let $L$ be a complete lattice. The following statements are equivalent:
\begin{enumerate}[{\rm(1)}]
\item $L$ is completely distributive;
\item $L^{\rm op}$ is completely distributive;
\item $L$ is continuous and every element is the sup of co-primes.
\end{enumerate}
\end{lem}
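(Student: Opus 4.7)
The plan is to establish (1) $\Leftrightarrow$ (2) via the self-dual form of the complete distributive law, and then to characterize complete distributivity through the \emph{totally-below relation} in order to obtain (1) $\Leftrightarrow$ (3).

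For (1) $\Leftrightarrow$ (2), I would recall that the complete distributive identity
\[
\bigwedge_{i\in I}\bigvee_{j\in J_i} x_{ij} \;=\; \bigvee_{f\in\prod_i J_i}\bigwedge_{i\in I} x_{i,f(i)}
\]
admits a dual formulation obtained by applying it twice, or equivalently via the standard bijection between doubly-indexed families and their choice-function transposes. Hence $L$ satisfies this identity iff $L^{\rm op}$ does; this is a purely formal lattice-theoretic fact and requires no additional hypothesis on $L$.

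For (1) $\Rightarrow$ (3), I would introduce the totally-below relation $x \triangleleft y$, meaning that for every $S\subseteq L$ with $y \leq \bigvee S$ there exists $s\in S$ with $x\leq s$. Complete distributivity is well-known to be equivalent to $\triangleleft$ being approximating, i.e.\ $x = \bigvee\{y : y \triangleleft x\}$ for every $x\in L$. Since $y \triangleleft x$ implies $y \ll x$, continuity of $L$ follows immediately. Moreover, $\triangleleft$ interpolates, so iterating yields a cofinal family of elements $p$ with $p \triangleleft p \leq x$; such fixed points of $\triangleleft$ are readily seen to be (completely) co-prime, whence each element is the supremum of co-primes below it.

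For (3) $\Rightarrow$ (1), I would verify the defining inequality $\bigwedge_i \bigvee_j x_{ij} \leq \bigvee_f \bigwedge_i x_{i,f(i)}$ by testing it first on co-primes $p$ lying below the left-hand side: co-primeness applied to each join $\bigvee_j x_{ij}$ supplies a choice function $f$ with $p \leq x_{i,f(i)}$ for every $i$, placing $p$ below the right-hand side. Continuity of $L$, together with the density of co-primes, then upgrades this inequality from co-primes to arbitrary elements. The principal obstacle lies precisely in this last direction: combining approximation by way-below elements (supplied by continuity) with approximation by co-primes in a compatible way requires the interpolation property of $\ll$ in a continuous lattice, and some care is needed to ensure that the co-primes selected below a given element satisfy the choice-function condition uniformly in the index $i$.
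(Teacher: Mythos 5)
The paper offers no proof of this lemma (it is quoted verbatim from \cite[Theorem I-3.16]{CON03}), so your attempt has to stand on its own, and one step in it genuinely fails. In $(1)\Rightarrow(3)$ you claim that iterated interpolation of the totally-below relation $\triangleleft$ yields a sup-dense family of fixed points $p\triangleleft p$, and that these supply the co-primes. Interpolation does not produce fixed points: the supremum $p$ of an increasing chain of interpolants $w_1\triangleleft w_2\triangleleft\cdots$ need not satisfy $p\triangleleft p$, because the witnesses for $w_n\le s_n\in S$ vary with $n$. Worse, the conclusion itself is false: $[0,1]$ is completely distributive but has \emph{no} element with $p\triangleleft p$ (for $p>0$ take $S=[0,p)$, whose supremum is $p$ but none of whose members dominates $p$), yet every element of the chain $[0,1]$ is co-prime, so statement (3) holds there for reasons your argument cannot see. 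The fixed points of $\triangleleft$ are the \emph{completely} co-prime elements, and they are sup-dense only in the prime-algebraic (completely distributive algebraic) case. The standard repair routes through $(2)$: $L^{\mathrm{op}}$ is completely distributive, hence continuous and distributive, and in a distributive continuous lattice every element is the infimum of the primes above it (\cite[Lemma I-3.7]{CON03}, a Zorn's-lemma argument); dualizing gives sup-density of co-primes in $L$.

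Two further points. In $(3)\Rightarrow(1)$, the step ``co-primeness applied to each join $\bigvee_{j}x_{ij}$ supplies a choice function'' is false for infinite index sets $J_i$ --- that would require complete co-primeness, which again fails in $[0,1]$. You do flag that care is needed, but the order of operations is forced: first show $x=\bigvee\{p\ \text{co-prime}: p\ll x\}$ (combine $x=\bigvee\{u:u\ll x\}$ with sup-density of co-primes and the fact that $p\le u\ll x$ implies $p\ll x$); then for such $p$ the relation $p\ll\bigwedge_i\bigvee_j x_{ij}\le\bigvee_j x_{ij}$ gives $p\le\bigvee_{j\in F_i}x_{ij}$ for some finite $F_i$, and only now does finitary co-primeness produce $f(i)\in F_i$ with $p\le x_{i,f(i)}$; no interpolation is needed. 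Finally, $(1)\Leftrightarrow(2)$ is Raney's theorem: your idea of applying the law to the transposed family is the right one, but closing it requires the choice-function combinatorics (for every $g\colon\prod_i J_i\to I$ there is an index $i_0$ such that every $j\in J_{i_0}$ is realized as $f(i_0)$ for some $f$ with $g(f)=i_0$), so calling it ``purely formal'' understates what remains to be checked.
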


\begin{lem}{\rm\cite{CON03,LAW81}} \label{hyper op} \label{dis hyper}
The following statements hold.
\begin{enumerate}[{\rm (1)}]

\item Let $L$ be a distributive continuous (algebraic) lattice, then $L$ is hypercontinuous (hyperalgebraic) iff $L^{\rm op}$ is hypercontinuous (quasialgebraic).

\item A complete lattice $L$ is a quasicontinuous (quasialgebraic) lattice iff $\omega(L)$ is a continuous (algebraic) lattice.
\end{enumerate}

\end{lem}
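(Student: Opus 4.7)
The plan is to treat the two parts separately; each reduces a lattice-theoretic property to a topological one under the right representation.

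For part (1), the key characterization to lean on is that a complete lattice $L$ is hypercontinuous (resp.\ hyperalgebraic) iff $L$ is continuous (resp.\ algebraic) and $\sigma(L)=\upsilon(L)$. For a distributive continuous lattice $L$, I would invoke the Hofmann--Lawson duality to identify $L$ with the open-set lattice $\mathcal{O}(X)$ of its spectrum $X=\mathrm{Spec}(L)$, a sober locally compact space. By Theorem \ref{open lattice}(2)--(3), hypercontinuity (resp.\ hyperalgebraicity) of $L$ then corresponds to $X$ being quasicontinuous (resp.\ quasialgebraic). To pass to $L^{\mathrm{op}}$, I would observe that $L^{\mathrm{op}}$ is order-isomorphic to $C(X)$ equipped with reverse inclusion, and then apply the paper's main Section~5 equivalence between quasicontinuity of $X$ and of $C(X)$ to transport the property across. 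Distributivity plus continuity lifts quasicontinuity of $L^{\mathrm{op}}$ back up to hypercontinuity in the continuous case; in the algebraic case one obtains only quasialgebraicity, because the correspondence between hypercompact elements on one side and finite generating sets on the other is genuinely asymmetric.

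For part (2), I would use the description of $\omega(L)$ via the subbase $\{L\setminus\ua x:x\in L\}$: every $\omega(L)$-open set has the form $L\setminus\ua F$ for some $F\subseteq L$. A direct check, in the spirit of Lemma \ref{hyperbelow}, identifies the way-below relation in the frame of $\omega(L)$-opens with the finitely-generated approximation relation on $L$. Continuity (resp.\ algebraicity) of the frame $\omega(L)$ then amounts, after unfolding the subbasis description, to the existence at each $x\in L$ of a directed family of finite sets $F_j$ with $\ua F_j$ approximating $x$ in the sense of the $n$-approximation relation (and, in the algebraic case, with each $F_j$ self-approximating). This is exactly the definition of $L$ being quasicontinuous (resp.\ quasialgebraic), with Theorem \ref{com=hypercom} handling the reduction to ordinary algebraicity in the monotone determined setting.

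The main obstacle will be part (1): one must make the order-isomorphism $L^{\mathrm{op}}\cong C(X)$ precise enough to carry hypercontinuity back and forth, and must track carefully why the algebraic case yields only quasialgebraicity on $L^{\mathrm{op}}$. Once those representations are in place, the remaining work is a sequence of translations between lattice-theoretic and topological formulations, none of which is deep.
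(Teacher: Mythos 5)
First, a point of reference: the paper does not prove this lemma at all --- it is quoted from \cite{CON03,LAW81} (part (2) is the Gierz--Lawson theorem), so there is no internal proof to compare against; your argument must stand on its own. It does not, because part (1) is circular. You transport the property from $L\cong\mathcal{O}(X)$ to $L^{\mathrm{op}}\cong C(X)$ by invoking ``the paper's main Section~5 equivalence between quasicontinuity of $X$ and of $C(X)$,'' but the direction of that theorem you need ($X$ quasicontinuous $\Rightarrow$ $C(X)$ quasicontinuous) is itself deduced in the paper from the present lemma; only the converse is proved independently (Proposition~\ref{quasi op}). Moreover, your closing step --- ``distributivity plus continuity lifts quasicontinuity of $L^{\mathrm{op}}$ back up to hypercontinuity'' --- is false: continuity of $L$ says nothing about continuity of $L^{\mathrm{op}}$, and $C(X)$ can be quasicontinuous without being continuous (take $X$ the Scott space of a quasicontinuous, non-continuous dcpo), hence without being hypercontinuous. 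The usable and correct form of the statement, and the only form the paper ever applies, is the asymmetric one: $L$ hypercontinuous (hyperalgebraic) iff $L^{\mathrm{op}}$ quasicontinuous (quasialgebraic). Also, a small slip: $L^{\mathrm{op}}$ is isomorphic to $C(X)$ with the usual inclusion order, not reverse inclusion.

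For part (2) your plan points in the right direction, but two steps fail as written. The $\omega(L)$-open sets are not the sets $L\setminus\ua F$: finite intersections of subbasic opens have that form with $F$ finite, but arbitrary unions are complements of arbitrary intersections of finitely generated upper sets. The entire content of the theorem is that, when $\omega(L)$ is a continuous lattice, way-below neighbourhoods can be refined back to sets $L\setminus\ua F$ with $F$ finite; this requires the compactness of $(L,\omega(L))$ together with an Alexander-subbase/Rudin-type argument, which your sketch omits. Finally, Theorem~\ref{com=hypercom} concerns $\mathcal{O}(X)$ for a monotone determined space $X$ and has no evident bearing on $(L,\omega(L))$, which need not be monotone determined, so that reduction does not go through. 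A self-contained proof should reproduce the arguments of \cite{LAW81} (or the corresponding results in \cite{CON03}) rather than route through theorems proved later in this paper.
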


\begin{prop}
Let $X$ be a $\mathbf{T_0}$ space. If $C(X)$ is a continuous lattice,  then $X$ is continuous.
\end{prop}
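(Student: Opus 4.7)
The plan is to reduce the claim to an application of Theorem \ref{open lattice}(1), which says that $X$ is continuous iff $\mathcal{O}(X)$ is a completely distributive lattice. Since the map $A\mapsto X\setminus A$ is an order-reversing bijection between $C(X)$ and $\mathcal{O}(X)$, we have $C(X)\cong \mathcal{O}(X)^{\mathrm{op}}$ as complete lattices. Complete distributivity is self-dual by the equivalence of (1) and (2) in Lemma \ref{completely distributive}, so it is enough to show that $C(X)$ is completely distributive under the hypothesis that it is continuous.

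For this, I would use the criterion in Lemma \ref{completely distributive}(3): a complete lattice is completely distributive iff it is continuous and every element is a supremum of co-primes. Continuity of $C(X)$ is given by assumption, so the remaining task is to identify enough co-primes. In $C(X)$, finite joins are unions (the union of two closed sets is closed), so an element $p\in C(X)$ is co-prime exactly when $p\subseteq A\cup B$ with $A,B\in C(X)$ forces $p\subseteq A$ or $p\subseteq B$; that is, the co-primes are precisely the nonempty irreducible closed sets.

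Next I would observe that for each $x\in X$, the principal closed set ${\downarrow}x=\overline{\{x\}}$ is irreducible (a standard fact in any $\mathbf{T_0}$ space), hence co-prime in $C(X)$. For any $A\in C(X)$ one has
\[
A \;=\; \bigcup_{x\in A}{\downarrow}x \;=\; \bigvee_{x\in A}{\downarrow}x,
\]
with the convention that the empty supremum gives $\emptyset$ when $A=\emptyset$. Thus every element of $C(X)$ is a join of co-primes, so Lemma \ref{completely distributive}(3) yields that $C(X)$ is completely distributive. Passing to the opposite lattice via Lemma \ref{completely distributive}(1)$\Leftrightarrow$(2), $\mathcal{O}(X)$ is completely distributive, and Theorem \ref{open lattice}(1) delivers the continuity of $X$.

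There is no real obstacle here; the proof is a bookkeeping exercise once one notices that $C(X)$ always has enough co-primes (the principal closed sets) to generate every element, so that adding continuity of $C(X)$ promotes it all the way to complete distributivity. The only point to handle with care is the routine verification that principal closures are irreducible and that the join in $C(X)$ of a family of closed sets is simply their union up to closure, which makes the co-prime characterisation transparent.
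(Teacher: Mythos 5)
Your proposal is correct and follows exactly the paper's argument: the paper's proof is the one-line observation that $C(X)$ has enough co-primes, hence is completely distributive by Lemma \ref{completely distributive}, so $\mathcal{O}(X)$ is completely distributive and Theorem \ref{open lattice}(1) applies. You have simply spelled out the details (co-primes are the irreducible closed sets, principal closures are irreducible, every closed set is the union of the closures of its points) that the paper leaves implicit.
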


\begin{proof}
$C(X)$ is a completely distributive lattice by Lemma \ref{completely distributive} since it has enough co-primes. It follows that $\mathcal{O}(X)$ is completely distributive. Hence $X$ is continuous by Theorem \ref{open lattice}(1).
\end{proof}

The above result can be generalized to quasicontinuity. Whether an analogous result holds for core-compactness, however, is still unknown.

\begin{prop}{\rm \label{quasi op}}
Let $X$ be a $\mathbf{T_0}$ space. If $C(X)$ is a quasicontinuous lattice, then $X$ is quasicontinuous.
\end{prop}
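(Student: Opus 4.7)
The plan is as follows. By Theorem~\ref{open lattice}(2) it suffices to show that $X$ is locally hypercompact, so fix $x \in X$ and an open neighborhood $U$ of $x$ and aim to produce a finite $F_0 \subseteq U$ with $x \in \mathrm{int}(\uparrow F_0)$. The starting observation is that $\Diamond U = \{B \in C(X) : B \cap U \neq \emptyset\} = C(X) \setminus \uparrow (X\setminus U)$ is a subbasic upper-topology open set in $C(X)$, hence Scott open, and $\downarrow x \in \Diamond U$. Applying the quasicontinuity of $C(X)$ at this Scott neighborhood furnishes a finite $F = \{A_1, \ldots, A_n\} \subseteq C(X)$ with
\[
\downarrow x \in \mathcal{W} := \mathrm{int}_{\sigma(C(X))}(\uparrow F) \subseteq \uparrow F \subseteq \Diamond U.
\]
Each $A_i$ then meets $U$; I would pick $y_i \in A_i \cap U$ and take $F_0 = \{y_1,\ldots, y_n\}$, which sits inside $U$.

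The heart of the proof is then to verify $F_0 \ll_n x$, which by Lemma~\ref{n-quasicontinuity} is exactly $x \in \mathrm{int}(\uparrow F_0)$. Using the net formulation of $\ll_n$, I would take an arbitrary net $(z_j)_{j \in J}$ converging to $x$ in $X$ and look for some $j$ with $z_j \in \uparrow F_0$. The key device is to lift $(z_j)$ to the directed family
\[
\mathcal{D} := \{\,\overline{F'} : F' \text{ a finite subset of } \{z_j : j\in J\}\,\} \subseteq C(X),
\]
ordered by inclusion. Its supremum in $C(X)$ is $\overline{\{z_j : j\in J\}}$, and since $(z_j) \to x$ we have $x \in \overline{\{z_j\}}$, so $\downarrow x \subseteq \overline{\{z_j\}}$; because $\mathcal{W}$ is an upper set this yields $\overline{\{z_j\}} \in \mathcal{W}$, and Scott openness of $\mathcal{W}$ then forces some $\overline{F'} \in \mathcal{W} \subseteq \uparrow F$ for a finite $F' = \{z_{j_1}, \ldots, z_{j_k}\}$.

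The finishing move uses the $\mathbf{T_0}$ identity $\overline{F'} = \bigcup_{l=1}^k \downarrow z_{j_l}$: from $\overline{F'} \in \uparrow F$ some $A_i$ satisfies $A_i \subseteq \bigcup_l \downarrow z_{j_l}$, so the chosen $y_i \in A_i$ obeys $y_i \leq z_{j_l}$ for some $l$, placing $z_{j_l}$ inside $\uparrow F_0$. I do not anticipate a serious obstacle beyond packaging the net $(z_j)$ into the directed family $\mathcal{D}$; this is precisely what bypasses the fact that $\eta\colon X \to \Sigma C(X)$ need not be continuous, since we do not require the image net $(\downarrow z_j)$ to Scott-converge, only its ``directed envelope'' $\mathcal{D}$ to do so.
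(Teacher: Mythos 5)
Your argument is correct, and its engine is the same as the paper's: you lift a convergent net $(z_j)\to x$ to the directed family of closures of its finite subsets in $C(X)$, note that its supremum $\overline{\{z_j: j\in J\}}$ lies above ${\downarrow}x$, land some $\overline{F'}=\bigcup_l {\downarrow}z_{j_l}$ in a Scott-open neighbourhood contained in ${\uparrow}\{A_1,\dots,A_n\}$, and descend from the $A_i$ to chosen points $y_i\in A_i\cap U$. Where you genuinely diverge is in how the finite family $\{A_1,\dots,A_n\}$ is produced. The paper first proves a standalone claim that $\{A_1,\dots,A_n\}\ll{\downarrow}x$ in $C(X)$ forces $\{a_1,\dots,a_n\}\ll_n x$ for any $a_i\in A_i$, then shows the \emph{infinite} family $\{{\downarrow}b: b\in U\}$ is way below ${\downarrow}x$ and invokes the interpolation property of quasicontinuous domains to squeeze a finite $\{B_1,\dots,B_m\}$ in between. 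You instead observe that $\Diamond U$ is Scott open and invoke the other standard consequence of quasicontinuity (via Rudin's Lemma; cf.\ \cite[Proposition III-3.6]{CON03}) that every Scott-open set containing $A$ contains some $\mathrm{int}_{\sigma}({\uparrow}F)\ni A$ together with ${\uparrow}F$ itself. Both routes rest on a nontrivial but standard fact about quasicontinuous domains of comparable depth, so neither is more elementary; yours is slightly more streamlined in that the relation $\ll$ never appears explicitly---the Scott-openness of $\mathcal{W}$ does all the work in the net argument. Two cosmetic slips to fix: $\Diamond U$ is the complement of the principal \emph{lower} set ${\downarrow}(X\setminus U)$ in $C(X)$, not of ${\uparrow}(X\setminus U)$; and the reduction of quasicontinuity to local hypercompactness is Theorem \ref{n-quasicontinuous space}(2) (which is how the paper defines quasicontinuous spaces), not Theorem \ref{open lattice}(2).
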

\begin{proof}

Let $C(X)$ be a quasicontinuous domain. Then for any $x \in X$, there exists some $\{A_1,\dots,A_n\} \ll\ \da x$ in $C(X)$. 
Given any $a_i \in A_i\ (1 \leq i \leq n)$, we have $\{\da a_1,\dots,\da a_n\} \ll\ \da x$.

{\bf Claim 1.} $\{a_1,\dots,a_n\} \ll_n x$ in $X$.

Given any net $(x_i)_I \to x$, $x \in  \overline{\bigcup_{i \in I} \{x_i\} }$. Then $\da x \subseteq \ \overline{\bigcup_{i \in I} \{x_i\} }$. Let $\mathcal{F} = \{\bigcup_{i \in F}\! \da x_i : F \subseteq_f I \}$. Then $\mathcal{F}$ is directed and $ \da x \leq \bigvee^\ua \mathcal{F} = \overline{\bigcup_{i \in I} \{x_i\} } $ in $C(X)$.  There exist some $F\subseteq_f I$ and $a_j$ such that $\da a_j \subseteq \bigcup_{i\in F} \da x_i$. Thus, there exists a $i \in F$ such that $a_j \leq x_i$, i.e., $\{a_1,\dots,a_n\} \ll_n x$ in $X$.

{\bf Claim 2.} For any $x$ and any open neighbourhood $U$ of $x$, there exists some $F \subseteq_f U$ such that $x \in (\ua F)^\circ$.

 Given any directed family $\mathcal{A}=\{A_i\}_I$ of closed subsets of $X$ such that $\da\! x \leq \bigvee \mathcal{A} = \overline{\bigcup_{i \in I} A_i}$, then $U \cap (\bigcup_{i \in I} A_i) \not = \emptyset$. There exists some $i \in I$ such that $A_i \cap U \not = \emptyset$. Pick $b \in A_i \cap U$, then $\da\! b \subseteq A_i$. We have $\{\da\! b: b \in U\} \ll\ \da\! x $ in $C(X)$. Since $C(X)$ is quasicontinuous, there exists a $\{B_1,\dots,B_m\} \subseteq C(X)$ such that $\{\da b : b \in U \} \ll \{B_1,\dots,B_m\} \ll\ \da x $ in $C(X)$. Then  
 $$\forall 1 \leq i \leq m,\ \{\da b: b \in U\} \ll B_i = \overline{\bigcup_{F\subseteq_f B_i} \da F},$$ 
 which implies that there exists some $F \subseteq_f B_i$ and $b_i \in U$ such that ${\da} b_i \subseteq {\da} F \subseteq B_i$. Thus, $b_i \in B_i \cap U$. It follows that $\{{\da} b_1,\dots,{\da} b_m\} \ll\ {\da} x$ in $C(X)$. Then $\{b_1,\dots,b_n\} \ll_n x$ by Claim 1, i.e., $x \in ({\ua}\{b_1,\dots,b_n\})^\circ$. 
\end{proof}


\begin{thm}
Let $X$ be a $\mathbf{T_0}$ space.
\begin{enumerate}[{\rm(1)}]
\item $X$ is quasicontinuous iff the lattice $C(X)$ is a quasicontinuous lattice.
\item $X$ is quasialgebraic iff the lattice $C(X)$ is a quasialgebraic lattice.
\end{enumerate}
\end{thm}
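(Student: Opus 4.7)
The statement splits into four implications, of which Proposition~\ref{quasi op} handles $\Leftarrow$ of $(1)$. The plan is to supply the other three using a bridge lemma connecting hypercompactness in $X$ to idempotence of families of principal closed sets in $C(X)$: for any $F \subseteq_f X$, $\uparrow F$ is open in $X$ if and only if $\mathcal{F} := \{\downarrow f : f \in F\}$ is idempotent in $C(X)$ (equivalently, $\uparrow \mathcal{F}$ is Scott-open). The forward direction uses that the open neighborhood $\uparrow F$ of $f$ must meet $\bigcup \mathcal{D}$ whenever $f \in \overline{\bigcup \mathcal{D}} = \bigvee^{\uparrow} \mathcal{D}$, producing a $D \in \mathcal{D}$ containing some $\downarrow f'$; the converse is a net argument applying Scott-openness of $\uparrow \mathcal{F}$ to the directed family $\{\bigcup_{j \in G} \downarrow y_j : G \subseteq_f J\}$ attached to a net $(y_j) \to y \geq f$.

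For $\Rightarrow$ in both $(1)$ and $(2)$, fix $A \in C(X)$ and set $\mathrm{fin}(A) = \{\mathcal{F} \subseteq_f C(X) : \mathcal{F} \ll A\}$, further imposing idempotence in case $(2)$. Directedness follows from the pairwise union $\mathcal{F}^3 := \{F^1 \cup F^2 : F^i \in \mathcal{F}^i\}$, which satisfies $\uparrow \mathcal{F}^3 = \uparrow \mathcal{F}^1 \cap \uparrow \mathcal{F}^2$ (so idempotence is preserved as a finite intersection of Scott-opens) and $\mathcal{F}^3 \ll A$ by directedness of any $\mathcal{D}$ above $A$. For the approximation $\uparrow A = \bigcap_{\mathcal{F} \in \mathrm{fin}(A)} \uparrow \mathcal{F}$, argue contrapositively: if $B \not\supseteq A$, pick $a \in A \setminus B$ and apply local hypercompactness of $X$ (hypercompact-basedness in case $(2)$) to the open set $X \setminus B$ to choose $F^a \subseteq_f X \setminus B$ with $a \in (\uparrow F^a)^{\circ}$ (resp.\ $a \in \uparrow F^a = (\uparrow F^a)^{\circ}$). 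The Bridge Lemma makes $\mathcal{F}^a := \{\downarrow f : f \in F^a\}$ idempotent in the algebraic case; it is $\ll A$ because for any $\mathcal{D}$ above $A$ the point $a$ lies in $\overline{\bigcup \mathcal{D}}$, so its open neighborhood $(\uparrow F^a)^{\circ}$ meets some $D \in \mathcal{D}$ and forces $\downarrow f \subseteq D$. Were $B \in \uparrow \mathcal{F}^a$, some $f \in F^a \subseteq X \setminus B$ would lie in $B$, a contradiction.

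For $\Leftarrow$ of $(2)$, Proposition~\ref{quasi op} already yields $X$ quasicontinuous; the task is to upgrade to quasialgebraic. Given $x \in X$ and open $U \ni x$, the set $\Diamond U = \{B \in C(X) : B \cap U \neq \emptyset\}$ is Scott-open in $C(X)$ and contains $\downarrow x$, so quasialgebraicity produces an idempotent $\mathcal{A} = \{A_1, \ldots, A_n\} \ll \downarrow x$ with $\uparrow \mathcal{A} \subseteq \Diamond U$. Applying Scott-openness of $\uparrow \mathcal{A}$ to each directed representation $A_j = \bigvee^{\uparrow}\{\downarrow T : T \subseteq_f A_j\}$ yields finite $T_j \subseteq A_j$ and indices $k(j)$ with $A_{k(j)} \subseteq \downarrow T_j$; the refined family $\{\downarrow T_j\}$ remains idempotent, $\ll \downarrow x$, and each $\downarrow T_j$ meets $U$. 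From this data I then plan to distill a concrete finite $F \subseteq U$ with $x \in \uparrow F = (\uparrow F)^{\circ}$, which by the Bridge Lemma amounts to exhibiting an idempotent principal-generated family with the right incidence with $\downarrow x$ and $\Diamond U$. This distillation step is the main obstacle of the proof: idempotent finite families in $C(X)$ need not be of principal form, so coupling the $T_j$'s with chosen points of $A_j \cap U$ (nonempty as $A_j \in \Diamond U$) while preserving the Scott-openness required by the Bridge Lemma demands careful bookkeeping, and I expect it to absorb most of the technical effort.
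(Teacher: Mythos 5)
Your route is genuinely different from the paper's, and three of the four implications are in good shape. The paper proves everything at the level of the open-set lattice: it quotes that $X$ is quasicontinuous (quasialgebraic) iff $\mathcal{O}(X)$ is hypercontinuous (hyperalgebraic), and then transfers across the duality $C(X)\cong\mathcal{O}(X)^{\mathrm{op}}$ via the cited Lemma on distributive hypercontinuous lattices and the algebraicity of $\omega(C(X))=\upsilon(\mathcal{O}(X))$. You instead build the approximating families in $C(X)$ by hand. Your Bridge Lemma is correct (one direction is just that $\{B: B\cap F\neq\emptyset\}=\Diamond({\uparrow}F)$ is upper-open when ${\uparrow}F$ is open; the other is the net argument, essentially the paper's Lemma~\ref{n-quasicontinuity}), the directedness via $\uparrow\mathcal{F}^3=\uparrow\mathcal{F}^1\cap\uparrow\mathcal{F}^2$ is standard and preserves idempotence, and the contrapositive construction of $\mathcal{F}^a=\{{\downarrow}f: f\in F^a\}$ with $\mathcal{F}^a\ll A$ and $B\notin\uparrow\mathcal{F}^a$ is correct. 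So the two $\Rightarrow$ directions go through, and $\Leftarrow$ of (1) is Proposition~\ref{quasi op} in both treatments. What your approach buys is an elementary, self-contained argument for these parts; what it costs shows up in the remaining implication.

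The gap is exactly where you flag it, and it is not mere bookkeeping. Your Bridge Lemma converts Scott-openness of $\uparrow\{{\downarrow}f : f\in F\}=\{B: B\cap F\neq\emptyset\}$ --- a finite \emph{union} of the subbasic sets $\Diamond\{f\}$ --- into openness of ${\uparrow}F$ in $X$. The refined idempotent family $\{{\downarrow}T_j\}$ you extract in $\Leftarrow$ of (2) instead has $\uparrow\{{\downarrow}T_j\}=\bigcup_j\{B: T_j\subseteq B\}$, a union of finite \emph{intersections} of such sets, and the net argument no longer descends to $X$: for $y$ with $T_j\subseteq{\downarrow}y$ and a net $(y_V)\to y$, Scott-openness only yields a finite $G$ with $T_k\subseteq\bigcup_{V\in G}{\downarrow}y_V$, where different points of $T_k$ may be witnessed by different $y_V$'s, so no single $y_V$ is an upper bound of $T_k$ and the candidate set $\bigcup_j\{y: T_j\subseteq{\downarrow}y\}$ cannot be shown open this way. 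Hence no finite $F\subseteq U$ with $x\in{\uparrow}F=({\uparrow}F)^{\circ}$ drops out of the data you have assembled, and "careful bookkeeping" will not close this; some new input is required. For comparison, the paper closes this direction by a chain of lattice facts: $C(X)$ quasialgebraic gives $\upsilon(\mathcal{O}(X))$ algebraic, part (1) gives $\mathcal{O}(X)$ hypercontinuous so that $\sigma(\mathcal{O}(X))=\upsilon(\mathcal{O}(X))$ is algebraic and completely distributive, whence $\mathcal{O}(X)$ is algebraic and then hyperalgebraic, i.e.\ $X$ is quasialgebraic. If you want to stay elementary you would have to reprove the combinatorial content of that chain directly, which is precisely the missing step.
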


\begin{proof}

(1) By Proposition \ref{quasi op}, if $C(X)$ is quasicontinuous, then $X$ is quasicontinuous. Conversely, let $X$ be quasicontinuous. Then $\mathcal{O} (X)$ is hypercontinuous by Theorem \ref{open lattice}. From Theorem 
\ref{hyper op}, we know that $C(X)$ is quasicontinuous.

(2) Let $C(X)$ be a quasialgebraic lattice. Then $\omega (C(X)) = \upsilon( \mathcal{O}(X))$ is algebraic by Lemma \ref{hyper op} and $\mathcal{O}(X)$ is hypercontinuous by (2) and Theorem \ref{open lattice}. Then $\sigma(\mathcal{O}(X)) = \upsilon(\mathcal{O}(X))$ is algebraic and completely distributive. Thus, $\mathcal{O}(X)$ is algebraic and then hyperalgebraic. Conversely, let $X$ be a quasialgebraic space. Then $\mathcal{O}(X)$ is distributive hyperalgebraic lattice. $C(X)$ is a quasialgebraic lattice by Theorem \ref{dis hyper}.
\end{proof}

Finally we collect some equivalent characterizations of quasicontinuous space as follows.
\begin{cor}
	Let $X$ be a $\mathbf{T_0}$ topological space. Then the following conditions are equivalent:
	\begin{enumerate}[{\rm(1)}]
		\item  $\mathcal{O}(X)$ with the upper topology is corecompact;
		\item {\rm (\cite[Theorem 2(2)]{LAW85})} $\mathcal{O}(X)$ with the upper topology is an injective space;
		\item $C(X)$ is a quasicontinuous domain;
		\item {\rm (\cite[Theorem VII-3.10]{CON03})} $C(X)$ with Lawson topology is Hausdorff;
		\item $X$ is a quasicontinuous space.
	\end{enumerate}
\end{cor}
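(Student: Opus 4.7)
The plan is to establish the five-way equivalence as a chain combining the main result of Section~5 with two classical citations from the literature. I would split the argument into the cycle $(5) \Leftrightarrow (3) \Leftrightarrow (4)$ on one side and $(5) \Leftrightarrow (1) \Leftrightarrow (2)$ on the other. The equivalence $(3) \Leftrightarrow (5)$ is precisely the preceding theorem of this section, which asserts that $X$ is quasicontinuous iff $C(X)$ is a quasicontinuous lattice. The equivalence $(3) \Leftrightarrow (4)$ is then the classical Theorem~VII-3.10 of \cite{CON03} applied to the complete lattice $L = C(X)$: a complete lattice is quasicontinuous iff its Lawson topology is Hausdorff.

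For $(1) \Leftrightarrow (5)$, I would first invoke Theorem~\ref{open lattice}(2) to translate quasicontinuity of $X$ into hypercontinuity of $\mathcal{O}(X)$. The task then reduces to the general lattice-theoretic claim that a complete lattice $L$ is hypercontinuous iff the space $(L, \upsilon(L))$ is corecompact, applied with $L = \mathcal{O}(X)$. The forward direction uses that hypercontinuity forces $\upsilon(L) = \sigma(L)$ together with continuity of $L$, so the upper topology coincides with the Scott topology on a continuous lattice; the latter is locally compact sober and hence corecompact. The converse direction is more delicate: corecompactness of $(L,\upsilon(L))$ gives continuity of the frame of upper-open sets, and combining this with Lemma~\ref{hyperbelow} one can extract, for each $x \in L$, a directed family of finite sets $F$ witnessing $\bigcap \ua F$ hyperbelow $x$ whose union recovers $x$, yielding hypercontinuity. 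Finally, $(1) \Leftrightarrow (2)$ is the cited Theorem~2(2) of \cite{LAW85}, which for this kind of lattice-derived space identifies corecompactness with injectivity.

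The main obstacle will be the reverse direction of the auxiliary equivalence ``$L$ hypercontinuous iff $(L,\upsilon(L))$ corecompact''. The forward direction is essentially bookkeeping with the fact that the Scott topology of a continuous lattice is locally compact and that hypercontinuity collapses $\sigma$ to $\upsilon$; but the converse requires carefully converting the abstract information that the open-set lattice of $\upsilon(L)$ is continuous into the concrete existence of enough finite sets approximating each element of $L$ in the $\prec$ sense. The bridge here is Lemma~\ref{hyperbelow}, which matches the interior operator in the upper topology with the inclusion $U \subseteq \ua F \subseteq V$ characterization of the hyperbelow relation; once this translation is in place, a standard directedness and sup argument produces the required approximating family and completes the chain.
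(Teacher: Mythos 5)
Your skeleton for items (2)--(5) is essentially what the paper intends (it states the corollary without a separate proof): $(3)\Leftrightarrow(5)$ is the theorem immediately preceding the corollary, $(3)\Leftrightarrow(4)$ is the cited Compendium theorem for the complete lattice $C(X)$, and (2) carries its own citation to Lawson. The weak point is your handling of (1). The auxiliary equivalence you reduce it to --- a complete lattice $L$ is hypercontinuous iff $(L,\upsilon(L))$ is corecompact --- is true, but its converse is essentially the Gierz--Lawson duality theorem ($L$ hypercontinuous iff $L^{\rm op}$ quasicontinuous), and the sketch you give for it does not go through as described. Lemma \ref{hyperbelow} characterizes the relation $U\prec V$ \emph{inside} $\mathcal{O}(X)$ by finite subsets $F$ of $X$, whereas your hypothesis, corecompactness of $(\mathcal{O}(X),\upsilon(\mathcal{O}(X)))$, is a statement about the second-order lattice $\upsilon(\mathcal{O}(X))$; the lemma supplies no bridge from continuity of that lattice to the existence of finite $F\subseteq X$ with $U\subseteq\ \ua F\subseteq V$. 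That bridge is exactly the nontrivial content of Proposition \ref{quasi op} (its two Claims), so as written your argument for the converse either is circular or silently re-proves the hard step it was supposed to rest on.

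There is a one-line route to $(1)\Leftrightarrow(3)$ using only the paper's own lemmas, which avoids hypercontinuity entirely and is the decomposition the corollary is really built on. The complement map $A\mapsto X\setminus A$ is an order anti-isomorphism $C(X)\to\mathcal{O}(X)$, hence a homeomorphism from $(C(X),\omega(C(X)))$ onto $(\mathcal{O}(X),\upsilon(\mathcal{O}(X)))$, so $\omega(C(X))\cong\upsilon(\mathcal{O}(X))$ as complete lattices (the paper already uses this identification in the proof of the preceding theorem). Corecompactness of $(\mathcal{O}(X),\upsilon(\mathcal{O}(X)))$ means precisely that this lattice is continuous, and by Lemma \ref{hyper op}(2) the lattice $\omega(C(X))$ is continuous iff $C(X)$ is quasicontinuous. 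This gives $(1)\Leftrightarrow(3)$ immediately, $(5)$ then follows from the preceding theorem, and $(2)$ sits between them via the cited Lawson result, since an injective $\mathbf{T_0}$ space is a continuous lattice in its Scott topology and hence corecompact. I recommend replacing your auxiliary equivalence and its sketched converse by this identification.
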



\section{Open questions}

\begin{enumerate}[(1)]
\item  Is there a Scott completion $(\tilde{X},i)$ of a topological space $X$, but $(\tilde{X},i)$ is not a D-completion of $X$?
\item  Under the conditions of Theorem \ref{opop}, does $\Sigma C(X)\times\Sigma C(X)=\Sigma(C(X)\times C(X))=\Sigma C(X\sqcup X)$ hold?
\item
Let $X$ be a $\mathbf{T_0}$ space and $\Sigma C(X)$ be core-compact. Is $X$  core-compact?
\end{enumerate}

\vskip 3mm

\noindent{\bf Acknowledgements}
\vskip 3mm
We thank Yuxu Chen and Peixian Zhou for helpful discussions. We also would like to thank one anonymous referee who points out that one of our main results is not new.

\vskip 3mm

\bibliographystyle{plain}
\bibliography{Scott}

\end{document}